\newcommand{\R}{\mathbb{R}}
\numberwithin{equation}{section}
\newtheorem{thm}{Theorem}
\newtheorem{lem}{Lemma}
\newtheorem{rem}{Remark}
\newcommand{\MLE}{\operatorname{MLE}}
\renewcommand{\P}{\mathbb{P}}
\newcommand{\supp}{\operatorname{supp}}
\newcommand{\KL}{\operatorname{KL}}
\newcommand{\mH}{\mathcal{H}}
\begin{document}

\title{Minimax theory for a class of non-linear statistical inverse problems}

\author{Kolyan Ray\footnote{The research leading to these results has received funding from the European Research Council under ERC Grant Agreement 320637.\newline Email: \href{mailto:k.m.ray@math.leidenuniv.nl}{k.m.ray@math.leidenuniv.nl}, \href{mailto:schmidthieberaj@math.leidenuniv.nl}{schmidthieberaj@math.leidenuniv.nl}}\; and Johannes Schmidt-Hieber
 \vspace{0.1cm} \\
 {\em Leiden University} }

\date{}
\maketitle

\begin{abstract}
We study a class of statistical inverse problems with non-linear pointwise operators motivated by concrete statistical applications. A two-step procedure is proposed, where the first step smoothes the data and inverts the non-linearity. This reduces the initial non-linear problem to a linear inverse problem with deterministic noise, which is then solved in a second step. The noise reduction step is based on wavelet thresholding and is shown to be minimax optimal (up to logarithmic factors) in a pointwise function-dependent sense. Our analysis is based on a modified notion of H\"older smoothness scales that are natural in this setting.
\end{abstract}

\paragraph{AMS 2010 Subject Classification:}
Primary 62G05; secondary 62G08, 62G20.
%
\paragraph{Keywords:} Non-linear statistical inverse problems; adaptive estimation; nonparametric regression; thresholding; wavelets.

\section{Introduction}

We study the minimax estimation theory for a class of non-linear statistical inverse problems where we observe the path $(Y_t)_{t\in [0,1]}$ arising from
\begin{align}
	dY_t = (h\circ K f ) (t) dt + n^{-1/2} dW_t, \quad t\in [0,1].
	\label{eq.mod_link=h}
\end{align}
Here, $h$ is a known strictly monotone link function, $K$ is a known linear operator mapping $f$ into a univariate function and $W=(W_t)_{t\in [0,1]}$ denotes a standard Brownian motion. The number $n$ plays the role of sample size and asymptotic statements refer to $n\rightarrow \infty.$ The statistical task is to estimate the regression function $f$ from data $(Y_t)_{t\in [0,1]}$. Model \eqref{eq.mod_link=h} includes the classical linear statistical inverse problem as a special case for the link function $h(x)=x.$ 

Various non-Gaussian statistical inverse problems can be rewritten in the form \eqref{eq.mod_link=h}. A strong notion to make this rigorous is to prove that the original model converges to a model of type \eqref{eq.mod_link=h} with respect to the Le Cam distance. This roughly means that for large sample sizes, such a model transformation 	does not lead to a loss of information about $f.$ Most such results are established in the direct case where $K$ is the identity operator, but as mentioned in \cite{RaySchmidt-Hieber2015b}, such statements can often be extended to the inverse problem setting (see \cite{meister2011} for an example). A particularly interesting example covered within this framework is Poisson intensity estimation, where we observe a Poisson process on $[0,1]$ with intensity $n Kf.$ This can be rewritten in the form \eqref{eq.mod_link=h} with $h(x) =2 \sqrt{x},$ see Section \ref{sec.specific_cases}. Reconstructing information from Poisson data has received a lot of attention in the literature due to its applications in photonic imaging, see Vardi et {\it al.} \cite{vardi1985}, Cavalier and Koo \cite{cavalier2002}, Hohage and Werner \cite{hohage2013}, and Bertero et {\it al.} \cite{bertero2009}. Other examples of models of type \eqref{eq.mod_link=h} include density estimation, density deconvolution, spectral density estimation, variance estimation, binary regression and functional linear regression. For further details on the approximation of models and additional examples, see Section \ref{sec.specific_cases}.

For linear link functions, we recover the well-known linear statistical inverse problem. In this case, the classical approach is to project the data onto the eigenbasis of $K^*K$ and to use the first (weighted) empirical basis coefficients to reconstruct the signal (cf. the survey paper Cavalier \cite{cavalier2008}). For non-linear link functions, spectral methods cannot be applied to obtain rate optimal estimators and we propose to pre-smooth the data instead. Noise reduction as a pre-processing step for inverse problems has been proposed in different contexts by Bissantz et {\it al.} \cite{bissantz2004}, Klann et {\it al.} \cite{klann2006}, Klann and Ramlau \cite{klann2008} and Math\' e and Tautenhahn \cite{mathe2011a, mathe2011b}. Let us briefly sketch the idea in the linear case $h(x)=x.$ Suppose that the signal $f$ has H\"older (or Sobolev) regularity $\beta$ and that $K$ has degree of ill-posedness $s$, mapping $\beta$-smooth signals into a space of $(\beta+s)$-smooth  functions. The minimax rate (possibly up to $\log n$-factors) for estimation of $Kf$ from data $(Y_t)_{t\in [0,1]}$ under $L^p$-loss is then $n^{-\frac{\beta+s}{2\beta+2s+1}}.$ Using pre-smoothing, we thus have access to
\begin{align}
Y^\delta_t= Kf(t) + \delta(t),
\label{eq.det_inv_prob}
\end{align}
where $\|\delta\|_{L^p} \lesssim n^{-\frac{\beta+s}{2\beta+2s+1}}$ with high probability. Reconstruction of $f$ from $Y^\delta$ may be now viewed as a deterministic inverse problem with noise level $\delta.$ In such a deterministic setup, the regression function $f$ can be estimated at rate $\|\delta\|_{L^p}^{\frac{\beta}{\beta+s}}\lesssim n^{-\frac{\beta}{2\beta+2s+1}}$ (cf. Natterer \cite{natterer1984}, Engl et {\it al.} \cite{engl1996}, Section 8.5), which coincides with the minimax rate of estimation (cf. \cite{cavalier2008}). Reducing the original statistical inverse problem to a deterministic inverse problem via pre-smoothing of the data therefore leads to rate-optimal procedures. The advantage of pre-smoothing the data is that it is quite robust to all sorts of model misspecifications, in particular outliers in the data. A drawback of the method is of course that for the second step, we need to know the order of the noise level $\delta$ and thus the smoothness of $Kf.$ We discuss various approaches to this in Section \ref{sec.extension}.

Following this two-step strategy, the statistical problem reduces to finding error bounds in the direct problem (i.e. $K$ is the identity). To pre-smooth the data, we apply the following general plug-in principle: construct an estimator $\widehat{h\circ Kf}$ of $h\circ Kf$ by hard wavelet thresholding and take $h^{-1}(\widehat{h\circ Kf})$ (recall that $h$ is injective) to be the pre-smoothed version $Y^\delta$ of $Kf.$ Analyzing this method for non-linear link functions $h$ is a challenging problem, since both the amount of smoothing and the minimax estimation rates are very sensitive to the local regularity properties of $h.$ Due to this individual behaviour depending on the non-linear link functions, we have restricted ourselves to those that are most relevant from a statistical point of view, namely $h(x)$ equal to $2\sqrt{x},$ $2^{-1/2}\log x$ and $2\arcsin\sqrt{x}$ (see Section \ref{sec.specific_cases} for more details). While we take a similar overall approach for these three link functions, the different natures of their singular points mean that the details of each case must be treated separately. Indeed, we believe that there is no feasible proof that covers them all at once.

We remark that two link functions $h$, $\widetilde h$ in \eqref{eq.mod_link=h} can be considered equivalent if they can be transformed into one another via $\mathcal{C}^\infty$-functions, in which case the minimax rates of estimation are identical up to constants. Since we concern ourselves with the rate and not the minimax constants, our results apply to any $\widetilde h$ that are equivalent to the specific link functions we study. In particular, the constants in these three cases have no impact.

We seek to control the difference between the pre-smoothed data $Y^\delta$ and $Kf,$ that is the noise level $\delta$ in the deterministic inverse problem \eqref{eq.det_inv_prob}. Due to the non-linearity of $h,$ the rates for $\delta$ are highly local and we establish matching pointwise upper and lower bounds  for $|\delta(t)|$ that depend on the smoothness of $Kf$ and the value $|Kf(t)|.$ From this, bounds in $L^p$  for all $1 \leq p \leq \infty$ can be deduced. Local rates of this type allow one to capture spatial heterogeneity of the function $f$ and were considered only recently for density estimation \cite{patschkowski2016}, which is related to model \eqref{eq.mod_link=h} with link function $h(x)=2\sqrt{x}.$

Surprisingly, in each of the three cases considered here there are two regimes for the local convergence rate. We call $x$ an \textit{irregular point} of the link function $h$ if $h'$ is singular at $x.$ For example, the link function $h(x)=2\sqrt{x}$ has irregular point $x=0.$ Away from any irregular point of $h,$ we obtain a local rate corresponding to the ``classical" nonparametric minimax rate. However, when close to an irregular point we obtain a different type of convergence rate which is related to estimation in irregular models. For the direct problem, we show that the derived rates are minimax optimal up to $\log n$-factors.

When dealing with non-linear link functions, classical H\"older smoothness spaces can only be employed up to a certain degree. For instance, it is well-known that if $f$ has H\"older smoothness $\beta \leq 1$ then $\sqrt{f}$ has H\"older smoothness $\beta/2$. However, this breaks down for the usual definition of H\"older smoothness when $\beta>1$ (cf. \cite{Bony2006}). So far, the common approach to this issue (within for example density estimation \cite{nussbaum1996}) is to assume that $f$ is uniformly bounded away from zero, which is an artificial restriction from a modelling perspective. There has been recent progress in removing this constraint \cite{patschkowski2016}, though such results can not be extended beyond $\beta=2$. We take a different approach here and consider a suitable modification of H\"older scales \cite{RaySchmidt-Hieber2015c} for non-negative functions which maintains this relationship. This framework is natural in our setting and is expanded upon in Section \ref{sec.func_spaces}. 

The article is organized as follows. The modified H\"older spaces are introduced in Section \ref{sec.func_spaces}, the main upper and lower bound results are presented in Section \ref{sec.main_results} with further discussion on extending this approach to the full inverse problem setting in Section \ref{sec.extension}. Section \ref{sec.specific_cases} provides additional motivation for model \eqref{eq.mod_link=h} and examples of statistical problems that can be rewritten in this form. Proofs and technical results are deferred to Sections \ref{sec.proofs} and \ref{sec.technical_results} in the appendix.

{\it Notation:} For two sequences $(a_n)$, $(b_n)$ of numbers, we write $a_n \asymp b_n$ if $|a_n/b_n|$ is bounded away from zero and infinity as $n\rightarrow\infty$. For two numbers $a,b$ arbitrarily selected from some set $S$, we also write $a \asymp b$ if $|a/b|$ is bounded away from zero and infinity for all $a,b\in S$ ($S$ will always be clear in the context). The maximum and minimum of two real numbers are denoted by $a\vee b$ and $a\wedge b$, while $\lfloor a \rfloor$ is the greatest integer strictly smaller than $a$. Let $KL(f,g) = \int f\log(f/g)d\mu$ be the Kullback-Leibler divergence for non-negative densities $f$, $g$ relative to a dominating measure $\mu$. Recall that in the Gaussian white noise model \eqref{eq.mod_link=h}, the Kullback-Leibler divergence between two measures generating such processes with regression functions $f$, $g$ equals $KL(P_f,P_g) = \frac{n}{2}\|h\circ Kf-h\circ Kg\|_2^2$.

\section{H\"older spaces of flat functions}
\label{sec.func_spaces}

In this section, we consider a restricted H\"older class of functions whose derivatives behave regularly in terms of the function values. This class has better properties under certain pointwise non-linear transformations compared to classical H\"older spaces (cf. \cite{RaySchmidt-Hieber2015c}) and is therefore appropriate as a parameter space for adaptive estimation in model \eqref{eq.mod_link=h}. In particular, it allows one to express pointwise convergence rates in terms of the function values alone.

Let $|f|_{C^\beta} = \sup_{x\neq y, x,y\in [0,1]} |f^{(\lfloor \beta \rfloor)}(x) - f^{(\lfloor \beta \rfloor)}(y)| /|x-y|^{\beta - \lfloor \beta \rfloor}$ be the usual H\"older seminorm and consider the space of $\beta$-H\"older continuous functions on $[0,1]$,
\begin{equation*}
C^\beta = \big\{ f: [0,1] \rightarrow \R \  : \  f^{(\lfloor \beta \rfloor)} \text{ exists}, \  \| f \|_{C^\beta} := \| f\|_\infty + \| f^{(\lfloor\beta\rfloor)} \|_\infty + |f|_{C^\beta}< \infty \big\},
\end{equation*}
where $\|\cdot\|_\infty$ denotes the $L^\infty[0,1]$-norm. Define the following seminorm on $C^\beta$:
\begin{align*}
| f |_{\mathcal{H}^\beta} = \max_{1 \leq j <\beta } \left( \sup_{x\in[0,1]} \frac{|f^{(j)}(x)|^\beta}{|f(x)|^{\beta-j}} \right)^{1/j}
= \max_{1 \leq j <\beta } \Big\| |f^{(j)}|^\beta/|f|^{\beta-j} \Big\|_\infty^{1/j}
\end{align*}
with $0/0$ defined as $0$ and $|f|_{\mH^\beta}=0$ for $\beta \leq 1.$ The quantity $| f |_{\mathcal{H}^\beta}$ measures the flatness of a function near zero in the sense that if $f(x)$ is small, then the derivatives of $f$ must also be small in a neighbourhood of $x$. Let
\begin{align*}
	\| f\|_{\mH^\beta} = \|f\|_{C^\beta} + |f|_{\mH^\beta}
\end{align*}
and consider the space of non-negative functions 
\begin{align*}
	\mathcal{H}^\beta = \{ f\in C^\beta \ : \  f\geq 0, \ \|f\|_{\mH^\beta}<\infty\}.
\end{align*}
This space contains for instance the constant functions, all $C^\beta$ functions that are uniformly bounded away from zero and any function of the form $f(x) = (x-x_0)^\beta g(x)$ for $g \geq \varepsilon > 0$ infinitely differentiable. Due to the non-negativity constraint, $\mH^{\beta}$ is no longer a vector space. However, it can be shown that it is a convex cone, that is, for any $f,g \in \mH^{\beta}$ and positive weights $\lambda, \mu>0$ then also $\lambda f +\mu g \in  \mH^{\beta}.$ Moreover, $\|\cdot\|_{\mH^\beta}$ defines a norm in the sense of Theorem 2.1 in \cite{RaySchmidt-Hieber2015c}. For $0 < \beta \leq 2$, the additional derivative constraint is in fact always satisfied: $\mathcal{H}^\beta$ contains all non-negative functions that can be extended to a $\beta$-H\"older function on $\R$ (Theorem 2.4 of \cite{RaySchmidt-Hieber2015c}).

The space $\mathcal{H}^\beta$ allows one to relate the smoothness of $f$ to that of $h\circ f$ in a natural way for certain non-linear link functions $h$ of interest (see Lemmas \ref{lem.arcsin_lip} and \ref{lem.log_lip}). In contrast, the classical H\"older spaces $C^\beta$ behave poorly in this respect. For example, if $h(x) = 2\sqrt{x}$ then there exist infinitely differentiable functions $f$ such that $h\circ f$ is not in $C^\beta$ for any $\beta >1$ (Theorem 2.1 of \cite{Bony2006}). Based on observing \eqref{eq.mod_link=h} (with $K$ the identity), one can not exploit higher order classical H\"older smoothness of $f$; this is a fundamental limitation of the square-root transform. An alternative approach to exploit extra regularity would be to impose stronger flatness-type assumptions on only the local minima of $f$, in particular forcing them all to be zero (e.g. \cite{Bony2006b,Bony2010}). However, the approach taken here is more flexible, permitting functions to take small non-zero values.

For the Bernoulli case \eqref{eq.mod_link=arcsin} with $h(x)= 2\arcsin \sqrt{x}$, we must make a further restriction since the function range is limited to $[0,1]$:
\begin{align*}
\mathcal{H}_B^\beta = \left\{ f : [0,1] \rightarrow [0,1] : \|f\|_{\mH_B^\beta} := \|f\|_{C^\beta([0,1])} + |f|_{\mH^\beta} + |1-f|_{\mH^\beta} < \infty \right\} .
\end{align*}
In particular, this implies that there exists $\kappa>0$ such that $|f^{(j)}(x)|^\beta \leq \kappa^j \min(f(x),1-f(x))^{\beta-j}$ for $j=1,...,\lfloor \beta \rfloor$. The class $\mathcal{H}_B^\beta$ simply ensures the same behaviour near the boundary value 1 as $\mathcal{H}^\beta$ does near 0, since both $0$ and $1$ are irregular points of $h(x) = 2 \arcsin \sqrt{x}$.

\section{Main results for the direct case}
\label{sec.main_results}

In this section we derive matching upper and lower bounds for pointwise estimation in the pre-smoothing step of the full statistical model \eqref{eq.mod_link=h}. We are thus concerned with the pointwise recovery of $Kf$ from observation \eqref{eq.mod_link=h} or equivalently obtaining pointwise bounds for the noise level $\delta$ in the deterministic inverse problem \eqref{eq.det_inv_prob}. For this it suffices to consider the direct model where we observe the path $(Y_t)_{t\in [0,1]}$ with
\begin{align}
	dY_t = (h \circ f)(t) dt + n^{-\frac 12} dW_t, \quad t\in [0,1],
	\label{eq.mod_link=h_direct}
\end{align}
where $h:\R \rightarrow \R$ is the known strictly monotone link function.

Our estimation procedure is based on a wavelet decomposition of $(Y_t)_{t\in [0,1]}$. Let $\phi, \psi: \R \rightarrow \R$ be a bounded, compactly supported $S$-regular scaling and wavelet function respectively, where $S$ is a positive integer (for a general overview and more details see e.g. \cite{Hardle1998}). The scaled and dilated functions $\phi_k, \psi_{jk}$ with support intersecting the interval $[0,1]$ can be modified to form an $S$-regular orthonormal wavelet basis of $L^2([0,1])$ by correcting for boundary effects as explained in Theorem 4.4 of \cite{Cohen1993}. This basis can be written as $\{\phi_k: k\in I_{-1}\} \cup \{\psi_{j,k} : j = 0,1,...; k \in I_j\} = \{\psi_{j,k} : j = -1,0,...; k \in I_j\}$ with $I_j$ the set of wavelet basis functions at resolution level $j$ and $\psi_{-1,k}:=\phi_k.$ Due to the compact support of $\psi,$ the cardinality of $I_j$ is bounded by a constant multiple of $2^j.$

By projecting the wavelet basis onto \eqref{eq.mod_link=h_direct}, it is statistically equivalent to observe
\begin{align}\label{eq.seq_space}
Y_{j,k} := \int_0^1 \psi_{j,k}(t) dY_t = d_{j,k} + n^{-1/2} Z_{j,k}, \quad \quad \quad j = -1,0,1,..., \; k\in I_j,
\end{align}
where $d_{j,k} = \int_0^1 h(f(x)) \psi_{j,k}(x) dx$ and $(Z_{j,k})$ are i.i.d. standard normal random variables.

We deal specifically with three cases \eqref{eq.mod_link=sqrt}, \eqref{eq.mod_link=arcsin} and \eqref{eq.mod_link=log} below. In each case, we construct an estimator using a two-stage procedure involving thresholding the empirical wavelet coefficients $Y_{j,k}$ in \eqref{eq.seq_space}. In the Gaussian variance case $h(x) = 2^{-1/2} \log x$ an additional step is needed, see Section \ref{sec.log_case}.
\begin{enumerate}
\item Let $2^{J_n}$ satisfy $n/2 \leq 2^{J_n} \leq n$ and estimate $h\circ f$ by the hard wavelet thresholding estimator
\begin{equation}\label{eq.hard_wav_est}
\widehat{h} = \sum_{j = -1}^{J_n} \sum_{k \in I_j} \widehat{d}_{j,k} \psi_{j,k} = \sum_{j = -1}^{J_n} \sum_{k \in I_j}   Y_{j,k} \mathbf{1}(|Y_{j,k}| > \tau \sqrt{\log n/n}) \psi_{j,k}
\end{equation}
for some $\tau > 2\sqrt{2}$.
\item Consider the plug-in estimator $\widehat{f} = h^{-1}\circ \widehat{h}$.
\end{enumerate}

\subsection{The Poisson case $h(x) = 2\sqrt{x}$}
\label{sec.Poisson_case}

For the link function $h(x) = 2\sqrt{x}$, model \eqref{eq.mod_link=h_direct} can be written
\begin{align}
	dY_t = 2\sqrt{f(t)} dt + n^{-1/2} dW_t, \quad t\in [0,1].
	\label{eq.mod_link=sqrt}
\end{align}
This model corresponds to nonparametric density estimation \cite{nussbaum1996,brown2004,RaySchmidt-Hieber2015b}, Poisson intensity estimation \cite{brown2004} and certain nonparametric generalized linear models with Poisson noise \cite{Grama1998} (see Section \ref{sec.specific_cases} for more details). In model \eqref{eq.mod_link=sqrt} we show that the rate of estimation for $f \in \mathcal{H}^\beta$ at a point $x \in [0,1]$ is
\begin{align}
	\tilde{r}_{n,\beta}^P \big(f(x)\big) = \left( \frac{\log n}{n} \right)^{\frac{\beta}{\beta+1}} \vee \left( f(x)\frac{\log n}{n} \right)^{\frac{\beta}{2\beta+1}} ,
	\label{eq.local_rate_sqrt}
\end{align}
giving two regimes depending on the size of $f(x)$. Note that the transition between these two regimes occurs at the threshold $f(x) \asymp (\log n/n)^{\beta/(\beta+1)}$. The presence of two regimes is caused by the non-linearity of $h$, which is smooth away from 0 but has irregular point 0. Indeed, the smoothness of $h\circ f$ bears much less resemblance to that of $f$ near 0 (see Lemma 5.3 of \cite{RaySchmidt-Hieber2015c}).

The first regime occurs for small values of $f$ and gives convergence rates which are faster than the parametric rate for $\beta>1.$ For small densities, the variance strictly dominates the bias of the estimator so that the rate is purely driven by stochastic error, hence the lack of dependence on $f(x)$. The second regime reflects the ``standard" nonparametric regime and yields the usual nonparametric rate, albeit with pointwise dependence. Recall that $\tau$ is the threshold parameter of the wavelet thresholding estimator.

\begin{thm}\label{thm.est_link=sqrt}
For any $0 < \beta_1 \leq \beta_2 < \infty$ and $0< R_1 \leq R_2 < \infty$, the estimator $\widehat{f}$ satisfies
\begin{equation*}
\inf_{\substack{\beta_1 \leq \beta \leq \beta_2\\ R_1 \leq R \leq R_2}} \inf_{f : \|f\|_{\mH^\beta}\leq R} \P_f \left( \sup_{x \in [0,1]} \frac{|\widehat{f}(x) - f(x)|}{\tilde{r}_{n,\beta}^P(f(x))} \leq C(\beta_1,\beta_2,R_1,R_2) \right) \geq 1 - \frac{3n^{-(\tau^2/8-1)} }{\sqrt{2\log n}} ,
\end{equation*}
where $\tilde{r}_{n,\beta}^P(f(x))$ is given in \eqref{eq.local_rate_sqrt}.
\end{thm}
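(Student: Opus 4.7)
The plan is to reduce the theorem to a pointwise analysis of $\widehat h - h\circ f$ and then transfer it through the explicit inverse $h^{-1}(z) = z^2/4$. Writing $\Delta(x) := \widehat h(x) - h(f(x))$, the relations $\widehat f(x) = \widehat h(x)^2/4$, $f(x) = h(f(x))^2/4$, and $h(f(x)) = 2\sqrt{f(x)}$ combine to give the identity
\begin{equation*}
\widehat f(x) - f(x) \;=\; \tfrac{1}{4}\,\Delta(x)^2 \;+\; \sqrt{f(x)}\,\Delta(x).
\end{equation*}
This algebraic step already explains the two regimes in $\tilde r_{n,\beta}^P$: the quadratic term governs $\widehat f - f$ when $f(x)$ is small and the linear term when $f(x)$ is larger, with the crossover set by the decay rate of $|\Delta(x)|$.

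The analysis of $\Delta$ is carried out on the concentration event
\begin{equation*}
E_n \;=\; \Bigl\{\, \max_{-1 \leq j \leq J_n}\ \max_{k \in I_j}\ |Z_{j,k}| \;\leq\; (\tau/2)\sqrt{\log n}\,\Bigr\},
\end{equation*}
whose complement has probability at most $3 n^{-(\tau^2/8-1)}/\sqrt{2\log n}$ by the Gaussian Mills ratio and a union bound over the $\lesssim n$ coefficients. This is precisely the probability stated in the theorem, and it is $f$-free, $\beta$-free and $R$-free, so holds uniformly over the parameter set. On $E_n$ the hard-thresholding step satisfies the deterministic bound $|\widehat d_{j,k} - d_{j,k}| \leq 2\min\bigl(|d_{j,k}|,\ \tau\sqrt{\log n/n}\bigr)$ at every coefficient, and since $2^{J_n} \asymp n$ the bias from truncating at $J_n$ is negligible for any $\beta \geq \beta_1$.

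The main step is to bound $|\Delta(x)| \leq \sum_{j,k} |\widehat d_{j,k} - d_{j,k}|\,|\psi_{j,k}(x)|$ in a manner sensitive to the local value of $f$. Using the flatness property of $\mathcal H^\beta$ to show that $h\circ f \in \mathcal H^\beta$ with norm controlled by $\|f\|_{\mathcal H^\beta}$ (the transfer machinery of \cite{RaySchmidt-Hieber2015c}), together with a wavelet characterisation of that class, one obtains a coefficient bound whose size near $x$ reflects the local values of $f$, strengthening the generic $|d_{j,k}| \lesssim 2^{-j(\beta+1/2)}$ when $\psi_{j,k}$ is supported near a near-zero of $f$. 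Splitting the sum at a critical resolution $J^\ast = J^\ast(f(x))$ where the localised coefficient bound meets the threshold, levels below $J^\ast$ contribute stochastic error of order $2^{J^\ast/2}\sqrt{\log n/n}$ while levels above $J^\ast$ contribute bias controlled by the localised wavelet bound. Balancing these two contributions gives an $f(x)$-dependent estimate of $|\Delta(x)|$ which, inserted into the identity above, produces exactly $\tilde r_{n,\beta}^P(f(x))$.

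The delicate point I expect to be the main obstacle is the sharpened wavelet coefficient bound: one needs a quantitative statement showing that coefficients of $h\circ f = 2\sqrt{f}$ whose support lies near a point of small $f$-value decay faster than the generic Hölder rate in a form matching both regimes of $\tilde r_{n,\beta}^P$ simultaneously. Once this location-aware bound is available, the level-by-level splitting is routine. Uniformity in $(\beta, R) \in [\beta_1,\beta_2] \times [R_1,R_2]$ then follows since all constants appearing in the bias/variance trade-off depend continuously on $(\beta, R)$ over the compact parameter set.
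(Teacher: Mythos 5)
Your outline reproduces the paper's argument almost step for step: the identity $\widehat f - f = \tfrac14\Delta^2 + \sqrt{f}\,\Delta$ is exactly the split into terms $(I)$ and $(II)$ in the paper's proof, your event $E_n$ plays the role of the paper's event $A_n$ defined in \eqref{eq.wavelet_event}, and the level-by-level splitting at an $f(x)$-dependent cutoff is the paper's $J_1 \wedge J_2$ construction. The location-aware coefficient bound you flag as the crux is precisely Lemma~\ref{lem.wav_decay_small} (imported from \cite{RaySchmidt-Hieber2015c}).

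One caution, though: your description of that key bound is inverted. Since $\sqrt f$ inherits only \emph{half} the H\"older smoothness of $f$ globally, the uniformly valid decay is $2^{-\frac j2(\beta+1)}$, not $2^{-j(\beta+1/2)}$; in particular, the claim that $h\circ f \in \mathcal H^\beta$ with norm controlled by $\|f\|_{\mathcal H^\beta}$ is false. The faster $2^{-\frac j2(2\beta+1)}$ decay (with penalty $f(x_0)^{-1/2}$) becomes available only at resolutions $j \ge j(x_0)$, where Lemma~\ref{lem.local_func_size} guarantees the wavelet support is small enough that $f$ is comparable to $f(x_0)$ on it. The sharpening is therefore obtained \emph{away} from near-zeros of $f$ at the relevant scale, not near them, and the bias estimate must juggle both regimes of Lemma~\ref{lem.wav_decay_small} rather than appeal to a single membership statement.
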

Thus, with high probability, we have that $|\widehat{f}(x)-f(x)|\leq C\tilde{r}_{n,\beta}^P(f(x))$ for all $x\in [0,1]$ and some constant $C.$ The estimator is fully adaptive, both spatially and over H\"older smoothness classes. Since the constant $C$ in Theorem \ref{thm.est_link=sqrt} does not depend on $x$, we also obtain convergence rates in $L^p$ for all $1 \leq p < \infty$,
\begin{align*}
\| \widehat{f} - f\|_{L^p} \lesssim \left( \int_0^1 \tilde{r}_{n,\beta}^P(f(x))^p dx \right)^{1/p} \lesssim (\log n/n)^{\frac{\beta}{\beta+1}} + \| f\|_{L^p}(\log n/n)^{\frac{\beta}{2\beta+1}},
\end{align*}
which are optimal up to $\log n$ factors.

The rate \eqref{eq.local_rate_sqrt} is the same (up to logarithmic factors) as that attained in density estimation \cite{patschkowski2016}, which at first sight might be expected due to the well-known asymptotic equivalence of density estimation and the Gaussian white noise model \eqref{eq.mod_link=sqrt}, cf. \cite{nussbaum1996,brown2004}. However the situation is in fact more subtle, with asymptotic equivalence only holding in the second regime of \eqref{eq.local_rate_sqrt} and under further conditions (see \cite{RaySchmidt-Hieber2015b} for more details).

For density estimation, local rates of the type \eqref{eq.local_rate_sqrt} have been obtained only up to $\beta = 2$ \cite{patschkowski2016}. This is consistent with our results since $\mH^\beta$  and the space of all functions on $[0,1]$ that can be extended to a non-negative $\beta$-H\"older function on $\R$ in fact coincide for $0<\beta\leq 2$ (Theorem 2.4 of \cite{RaySchmidt-Hieber2015c}). The authors in \cite{patschkowski2016} deal with higher derivatives in one specific situation, namely points near the support boundary, where the function necessarily satisfies a flatness type condition allowing one to quantify the behaviour of the higher order derivatives.

For practical applications, it may be useful to consider a bias corrected version of the estimator $\widehat f$,
\begin{align*}
	\widetilde f = \widehat f - \frac{1}{4n} \sum_{j=-1}^{J_n} \sum_{k\in I_j} \psi_{j,k}^2 \mathbf{1}(|Y_{j,k}| > \tau \sqrt{\log n/n}).
\end{align*}
To see this, recall that the inverse link function is $h^{-1}(y) =y^2/4$ and $E[Y_{j,k}^2] = d_{j,k}^2 +n^{-1}.$ For each selected empirical wavelet coefficient the squared noise therefore induces an upwards bias of size $n^{-1}$, which we correct by considering $\widetilde f.$

The rate \eqref{eq.local_rate_sqrt} is an upper bound for the pointwise rate of estimation of $f$ and we have a corresponding lower bound
\begin{align*}
	r_{n,\beta}^P(f(x)) = n^{-\frac{\beta}{\beta+1}} \vee ( f(x)/n )^{\frac{\beta}{2\beta+1}} ,
\end{align*}
which agrees with $\tilde{r}_{n,\beta}^P(f(x))$ in \eqref{eq.local_rate_sqrt} up to logarithmic factors.
\begin{thm}\label{thm.lower_bound_sqrt}
For any $\beta>0$, $R>0$, $x_0 \in [0,1]$ and any sequence $(f_n^*)_n$ with $\limsup_{n \rightarrow \infty}\|f_n^*\|_{\mH^\beta}<R$,
\begin{align*}
\liminf_{n \rightarrow \infty}  \inf_{\hat{f}_n(x_0)} \sup_{\substack{f:\|f\|_{\mH^\beta}\leq R \\ \KL(f,f_n^*)\leq1 }}    \P_f \left(\frac{|\hat{f}_n(x_0) - f(x_0)|}{r_{n,\beta}^P (f(x_0))} \geq C(\beta,R) \right) > 0 ,
\end{align*}
where the infimum is taken over all measurable estimators of $f(x_0)$.
\end{thm}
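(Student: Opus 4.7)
I apply the classical two-point Le Cam method. For each reference value $v := f_n^*(x_0)$, the goal is to construct two alternatives $f_0, f_1 \in \mathcal{H}^\beta$ with $\|f_i\|_{\mathcal{H}^\beta} \leq R$ and $\KL(P_{f_i}, P_{f_n^*}) \leq 1$, separated at $x_0$ by at least $c\, r_{n,\beta}^P(v)$ for a constant $c = c(\beta,R) > 0$. Recalling that $\KL(P_f, P_g) = 2n \|\sqrt{f} - \sqrt{g}\|_2^2$ in the model with $h(x) = 2\sqrt{x}$, the triangle inequality for this semi-metric combined with $\KL(P_{f_i}, P_{f_n^*}) \leq 1$ yields $\KL(P_{f_0}, P_{f_1}) \leq 4$. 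By Pinsker, $\TV(P_{f_0}, P_{f_1})$ is then bounded away from $1$, so the standard Le Cam two-point lemma applied to the balls of radius $\tfrac{c}{2} r_{n,\beta}^P(v)$ around $f_0(x_0)$ and $f_1(x_0)$ delivers the claim.

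For the construction, fix a nonnegative bump $\varphi \in C^\infty(\R)$ with $\supp\varphi \subseteq [-1,1]$, $\varphi(0) = 1$, chosen flat enough at its zero set that $\varphi \in \mathcal{H}^\beta$ (e.g.\ a smooth function vanishing to sufficiently high order at $\pm 1$). Set $f_0 := f_n^*$ and $f_1(x) := f_n^*(x) + \eta\, h_n^\beta\, \varphi\bigl((x - x_0)/h_n\bigr)$ with $\eta > 0$ a small constant. Different bandwidths are used in the two regimes of $r_{n,\beta}^P$: in the \emph{nonparametric regime} $v > n^{-\beta/(\beta+1)}$, take $h_n \asymp (v/n)^{1/(2\beta+1)}$, producing amplitude $\asymp (v/n)^{\beta/(2\beta+1)}$; in the \emph{flat regime} $v \leq n^{-\beta/(\beta+1)}$, take $h_n \asymp n^{-1/(\beta+1)}$, producing amplitude $\asymp n^{-\beta/(\beta+1)}$. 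In both cases $|f_1(x_0) - f_0(x_0)| \asymp r_{n,\beta}^P(v)$, as required. For the KL bound, I use $(\sqrt{a+b} - \sqrt{a})^2 \leq b^2/(4a)$ in the nonparametric regime (where $\mathcal{H}^\beta$-regularity of $f_n^*$ forces $f_n^* \asymp v$ on the shrinking support of the perturbation) and the cruder $(\sqrt{a+b} - \sqrt{a})^2 \leq b$ in the flat regime. A direct computation then gives $\KL(P_{f_0}, P_{f_1}) \leq C\eta$ in either regime, and $\eta$ is fixed small enough that all KL constraints lie below $1$.

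The principal obstacle is verifying $\|f_1\|_{\mathcal{H}^\beta} \leq R$. The classical $C^\beta$ contribution of the perturbation is of order $\eta h_n^\beta \cdot h_n^{-\beta} = \eta$, which is harmless. For the flatness seminorm $|\cdot|_{\mathcal{H}^\beta}$, the scaling $\epsilon := \eta h_n^\beta$ is calibrated precisely so that the perturbation alone satisfies $|(\eta h_n^\beta \varphi_{h_n})^{(j)}|^\beta / (\eta h_n^\beta \varphi_{h_n})^{\beta - j} = \eta^{j}\, |\varphi^{(j)}|^\beta / \varphi^{\beta - j}$, which is bounded because $\varphi \in \mathcal{H}^\beta$. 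Controlling the \emph{combined} ratio for the sum $f_1 = f_n^* + \eta h_n^\beta \varphi_{h_n}$ is the delicate step and requires a pointwise case analysis distinguishing where $f_n^*$ versus the perturbation dominates; invoking the convex-cone / norm property of $\mathcal{H}^\beta$ (Theorem 2.1 of \cite{RaySchmidt-Hieber2015c}), one concludes $\|f_1\|_{\mathcal{H}^\beta} \leq \|f_n^*\|_{\mathcal{H}^\beta} + O(\eta)$. Since $\limsup_n \|f_n^*\|_{\mathcal{H}^\beta} < R$ strictly, taking $\eta$ small enough closes the argument.
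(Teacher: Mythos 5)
Your proposal is correct and follows essentially the same route as the paper's proof: you split into the same two regimes of $r_{n,\beta}^P$, use the same two-point Le Cam reduction with $f_0 = f_n^*$ and a smooth flat bump added at the same bandwidths $(v/n)^{1/(2\beta+1)}$ and $n^{-1/(\beta+1)}$, bound KL via $(\sqrt{a+b}-\sqrt a)^2 \leq b^2/(4a)$ (using local equivalence of $f_n^*$ with $v$ from the $\mH^\beta$-regularity) and $(\sqrt{a+b}-\sqrt a)^2\leq b$ respectively, and check $\|f_1\|_{\mH^\beta}\leq R$ by combining the norm/cone property of $\mH^\beta$ with the flatness of the bump function. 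Two small remarks: the triangle-inequality step giving $\KL(P_{f_0},P_{f_1})\leq 4$ is superfluous once you set $f_0=f_n^*$, since then $\KL(P_{f_0},P_{f_1})=\KL(P_{f_n^*},P_{f_1})\leq 1$ directly; and you should also record explicitly that $r_{n,\beta}^P(f_1(x_0))\asymp r_{n,\beta}^P(f_0(x_0))$ (immediate since the perturbation amplitude $\eta h_n^\beta$ is at most comparable to $v\vee n^{-\beta/(\beta+1)}$), because the theorem statement normalizes by $r_{n,\beta}^P(f(x_0))$ for the $f$ inside the supremum, not by $r_{n,\beta}^P(v)$.
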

In the Gaussian white noise model, it is well-known that the minimax estimation rate for pointwise loss is $n^{-\frac{\beta}{2\beta+1}}$, while the adaptive estimation rate is $(n/\log n)^{-\frac{\beta}{2\beta+1}}$ (cf. \cite{lepskii1991} and Theorem 3 in \cite{brown1996}). Adaptation to the smoothness index $\beta$ therefore imposes an additional $\log n$-factor that is unavoidable in the convergence rate. We believe that a similar phenomenon leads to the additional $\log n$-terms in the upper bound and that this rate is sharp. 

In Theorem \ref{thm.lower_bound_sqrt}, we require the sequence $(f_n^*)_n$ to be strictly in the interior of $\{f:\|f\|_{\mH^\beta}\leq R\}$. The proof of Theorem \ref{thm.lower_bound_sqrt} gives some insight into the rates. In case \eqref{eq.mod_link=sqrt}, the Kullback-Leibler divergence corresponds to the Hellinger distance. Away from 0, this behaves like the $L^2$-distance, thereby leading to the usual classical nonparametric rate. As the function approaches zero however, testing between two functions depends on the $L^1$-distance and we therefore obtain the same rates as for irregular models (cf. \cite{jirak2014}). 

The statement of Theorem \ref{thm.lower_bound_sqrt} is considerably stronger than standard minimax lower bounds as it holds for all local parameter spaces $\{f: \KL(f,f_n^*)\leq1\}$ with $(f_n^*)_n$ an arbitrary sequence in the interior of the parameter space. To see the difference, consider a model $(P_f^n: f\in \Theta)$ and suppose we want to estimate $f$ with respect to a loss $\ell.$ Recall that any rate $\epsilon_n$ is a lower bound if there are two sequences of parameters $(f_{0,n})_n, (f_{1,n})_n \subset \Theta$ such that 
\begin{align}
	\ell(f_{0,n},f_{1,n})> 2\epsilon_n \quad \text{and} \ \ \KL(P_{f_{1,n}}^n,P_{f_{0,n}}^n)\leq 1
	\label{eq.lb_std_conditions}
\end{align}
(cf. \cite{tsybakov2009}, Theorem 2.2 (iii)). This is quite a weak notion, since it says that {\it somewhere} on the parameter space the estimation rate $\epsilon_n$ cannot be improved. We would of course prefer to have a notion for which the rate holds {\it everywhere} on $\Theta.$ In Theorem \ref{thm.lower_bound_sqrt} we require that $\epsilon_n$ is a lower bound on $\{f\in \Theta: KL(P_f^n, P_{f_n^*}^n)\leq 1\}$ for arbitrary sequences $(f_n^*)$. Taking $f_n^*=f_{0,n},$ this implies that for any sequence $(f_{0,n})_n\subset  \Theta$ we can find a sequence $(f_{1,n})_n \subset \Theta$ satisfying \eqref{eq.lb_std_conditions}. 

This change of definition makes a big difference in terms of the  lower bounds obtained. It is not hard to see for instance that $n^{-\frac{\beta}{2\beta+1}}$ is a minimax lower bound over an $\mH^\beta$-ball for estimation of $f(x_0).$ The rate is attained if the true function is bounded away from zero at $x_0.$ However, $n^{-\frac{\beta}{2\beta+1}}$ is not a minimax lower bound in the local sense of Theorem \ref{thm.lower_bound_sqrt}. Consider a sequence $(f_n^*)_n$ such that $f_n^*(x_0)\rightarrow 0$ as $n\rightarrow \infty.$ Then the local parameter space $\{f\in \mH^\beta: \KL(f, f_n^*)\leq 1\}$ also contains only functions vanishing at $x_0$ for large $n.$ Since we know from our upper bound in Theorem \ref{thm.est_link=sqrt} that for such functions the rate is faster than $n^{-\frac{\beta}{2\beta+1}}$, the latter can no longer be a lower bound. 

Obviously, one would like to restrict the parameter space to even smaller sets, by considering for instance shrinking Kullback-Leibler neighbourhoods around $f_n^*$. In this case however, the rates obtained for the upper and local lower bounds no longer match in general. To see this, consider the extreme case where the local parameter space consists of a single element. The rate obtained from the lower bound is zero, but there is of course no method that achieves zero risk over all parameters.

In order to see that flatness type conditions are necessary in order to achieve the rate of estimation $\tilde{r}_{n,\beta}^P(f(x))$ in \eqref{eq.local_rate_sqrt} for $\beta>1,$ we consider the class of non-negative linear functions on $[0,1].$ This class contains functions that have arbitrary H\"older smoothness but which are not flat near zero, for example $f(x)=x.$ For this class, the pointwise rate of estimation is bounded from below by $1/\sqrt{n\log n}$, as shown in the following result.

\begin{thm}\label{thm.Holder_counterexample}
Denote by $\mathcal{G}=\{f: f\geq 0, f(x)= ax+b \ \text{with} \  |a|+|b|\leq 2\}$ the class of non-negative linear functions on $[0,1]$ with bounded coefficients. Then there exists $C>0$ such that,
\begin{align*}
\liminf_{n \rightarrow \infty}  \inf_{\widehat f_n} \sup_{f \in \mathcal{G}: f(0) \leq n^{-1/2} } \P_f \left(\frac{|\hat{f}_n(0) - f(0)|}{(n\log n)^{-1/2}} \geq C \right) > 0 ,
\end{align*}
where the infimum is taken over all measurable estimators of $f(0)$.
\end{thm}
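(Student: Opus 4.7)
The argument is a direct Le Cam two-point reduction; the only nonstandard step is to choose the two hypotheses so that the square-root singularity of the link $h(x)=2\sqrt x$ at the origin produces an extra logarithmic saving over the parametric rate. I would consider the two linear functions
\[
f_0(x) = x, \qquad f_1(x) = \theta + x, \qquad \theta := (n\log n)^{-1/2}.
\]
Both are nonnegative on $[0,1]$ with coefficients $a=1$, $b=j\theta$ satisfying $|a|+|b|=1+j\theta\leq 2$ for $n$ large, and $f_j(0)=j\theta\leq n^{-1/2}$, so $f_0,f_1\in\{f\in\mathcal{G}: f(0)\leq n^{-1/2}\}$, while the separation at the estimation point is $|f_1(0)-f_0(0)|=(n\log n)^{-1/2}$.

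The next step is to bound the Kullback--Leibler divergence between the two white-noise observation laws. Using the formula recalled at the end of the Notation paragraph together with the identity $\sqrt b-\sqrt a=(b-a)/(\sqrt a+\sqrt b)$ and the pointwise bound $(\sqrt t+\sqrt{\theta+t})^2\geq \theta+t$,
\[
\KL(P_{f_0},P_{f_1}) = 2n\int_0^1 \frac{\theta^2}{(\sqrt t + \sqrt{\theta+t})^2}\,dt \;\leq\; 2n\theta^2\int_0^1 \frac{dt}{\theta+t} \;=\; 2n\theta^2\log\!\bigl(1+\tfrac{1}{\theta}\bigr).
\]
Inserting $\theta=(n\log n)^{-1/2}$ gives $2n\theta^2 = 2/\log n$, while $\log(1+1/\theta)=\log(1+\sqrt{n\log n})\leq \log n$ for $n$ large, so $\KL(P_{f_0},P_{f_1})\leq 2$ eventually.

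The conclusion then follows from the standard two-point lemma (cf.\ Theorem 2.2(iii) of \cite{tsybakov2009}): since the KL divergence of the two hypotheses is uniformly bounded and $|f_1(0)-f_0(0)|\geq (n\log n)^{-1/2}$, any measurable estimator $\hat f_n(0)$ incurs an error of at least $\tfrac12 (n\log n)^{-1/2}$ on one of $f_0,f_1$ with probability bounded away from zero. This yields the theorem with $C=1/2$.

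\textbf{Main obstacle.} The crux is realising that a two-point argument already suffices and identifying the correct pair. The logarithmic improvement over the parametric $n^{-1/2}$ rate does not arise from a Fano- or Assouad-type multiplicity argument but from the divergent integral $\int_0^1(\theta+t)^{-1}dt\asymp\log(1/\theta)$ that the square-root link produces near its irregular point $0$; this effect is activated precisely by alternatives that vanish at a single point of the domain. More conventional choices such as $0$ vs.\ $\theta$ constant, or $\theta(1-x)$ vs.\ $\theta'(1-x)$, only exploit the Hellinger geometry away from $0$ and deliver weaker $n^{-1}$ or $n^{-3/4}$ bounds.
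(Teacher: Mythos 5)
Your proof is correct and follows essentially the same route as the paper's: the same two-point reduction with the hypotheses $f_0(x)=x$ and $f_1(x)=x+\theta$ with $\theta\asymp(n\log n)^{-1/2}$, the same identification of the $\log(1/\theta)$ blow-up coming from the integral near the origin, and the same appeal to Tsybakov's two-point lemma. The only (cosmetic) difference is that you bound $(\sqrt{t}+\sqrt{\theta+t})^2\geq\theta+t$ globally and integrate in one stroke, whereas the paper splits the integral at $t=r_n$ and uses a cruder bound on $[0,r_n]$; both yield the same $n\theta^2\log(1/\theta)$ order, and the paper additionally inserts a small constant $c_0$ in $\theta$ to make the KL divergence at most $1$ rather than $2$, which is immaterial for the conclusion.
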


This should be compared to the definition of $\tilde{r}_{n,\beta}^P(f(x))$, which for $f(0) \leq n^{-1/2}$ and $\beta>1$ gives estimation rate  bounded by $(n^{-3/2}\log n)^{\beta/(2\beta+1)} \ll (n\log n)^{-1/2}.$ This example also shows that in model \eqref{eq.mod_link=sqrt} we do not get the fastest reconstruction for very smooth functions, such as linear functions, but we do for functions $f$ also satisfying flatness constraints in the sense that $f\in \mH^\beta$ with $\beta$ large.

\subsection{The Bernoulli case $h(x)= 2\arcsin \sqrt{x}$}
\label{sec.Bernoulli_case}

For the link function $h(x)= 2\arcsin \sqrt{x}$, model \eqref{eq.mod_link=h_direct} can be written
\begin{align}
	dY_t = 2 \arcsin (\sqrt{f(t)}) dt + n^{-1/2} dW_t, \quad t\in [0,1],
	\label{eq.mod_link=arcsin}
\end{align}
which is motivated by binary regression \cite{Grama1998} (see Section \ref{sec.specific_cases} for more details). We show that the rate of estimation for $f \in \mathcal{H}_B^\beta$ in \eqref{eq.mod_link=arcsin} at a point $x \in [0,1]$ is
\begin{align}
	\tilde{r}_{n,\beta}^B (f(x)) = \left( \frac{\log n}{n} \right)^{\frac{\beta}{\beta+1}} \vee  \left( f(x)(1-f(x))\frac{ \log n}{n} \right)^{\frac{\beta}{2\beta+1}}  
	\label{eq.local_rate_arcsin}
\end{align}
again giving two regimes depending on the size of $f(x)$. We could alternatively replace $f(x)(1-f(x))$ with $\min (f(x),1-f(x))$ above. Note that the first regime occurs if and only if $\min (f(x),1-f(x)) \lesssim (\log n/n)^{\beta/(\beta+1)}$, that is $f(x)$ is close to 0 or 1. We again see the effect of the non-linearity of $h$ on the rate since both 0 and 1 are irregular points (caused respectively by the $\sqrt{\cdot}$ and $\arcsin$ factors of $h$).

Given that \eqref{eq.mod_link=arcsin} arises from a binary response model \cite{Grama1998}, it is natural that the rate above behaves symmetrically in terms of $f$ and $1-f$. The rates are similar to those in the Poisson case \eqref{eq.local_rate_sqrt}, differing only in the second regime due to this symmetry. This means we again observe a superefficiency phenomenon near 0 and 1.
\begin{thm}\label{thm.est_link=arcsin}
For any $0 < \beta_1 \leq \beta_2 < \infty$ and $0< R_1 \leq R_2 < \infty$, the estimator $\widehat{f}$ satisfies
\begin{equation*}
\inf_{\substack{\beta_1 \leq \beta \leq \beta_2\\ R_1 \leq R \leq R_2}} \inf_{f :\|f\|_{\mH_B^\beta}\leq R} \P_f \left( \sup_{x \in [0,1]} \frac{|\widehat{f}(x) - f(x)|}{\tilde{r}_{n,\beta}^B(f(x))} \leq C(\beta_1,\beta_2,R_1,R_2) \right) \geq 1 - \frac{3n^{-(\tau^2/8-1)} }{\sqrt{2\log n}} ,
\end{equation*}
where $\tilde{r}_{n,\beta}^B(f(x))$ is given in \eqref{eq.local_rate_arcsin}.
\end{thm}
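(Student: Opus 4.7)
The plan is to parallel the proof of Theorem \ref{thm.est_link=sqrt}, the key new ingredient being the symmetric treatment of the two irregular points $0$ and $1$ of the link $h(x) = 2\arcsin\sqrt{x}$ that is enabled by the two flatness conditions built into the $\mH_B^\beta$-norm. First I would establish the good event
\begin{equation*}
\mathcal{A} := \bigcap_{-1 \leq j \leq J_n,\, k \in I_j}\Big\{n^{-1/2}|Z_{j,k}| \leq \tfrac{\tau}{2}\sqrt{\log n/n}\Big\},
\end{equation*}
which by a Gaussian tail bound and a union bound over at most $\lesssim 2^{J_n} \leq n$ coefficients has probability at least $1 - 3 n^{-(\tau^2/8 - 1)}/\sqrt{2\log n}$; this produces the probability factor appearing in the theorem. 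On $\mathcal{A}$, hard thresholding at level $\tau\sqrt{\log n/n}$ correctly separates signal from noise up to a factor of two.

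Set $g := h\circ f$. By Lemma \ref{lem.arcsin_lip}, $\|g\|_{C^\beta} \leq C(\beta, R)$ whenever $\|f\|_{\mH_B^\beta} \leq R$, so the usual $\beta$-H\"older wavelet coefficient decay $|d_{j,k}| \lesssim 2^{-j(\beta+1/2)}$ holds. A standard pointwise analysis of hard wavelet thresholding on $\mathcal{A}$, balancing stochastic error, thresholding bias and the $J_n$-truncation bias, then yields the uniform pointwise bound
\begin{equation*}
\sup_{x \in [0,1]} |\widehat h(x) - g(x)| \lesssim (\log n/n)^{\beta/(2\beta+1)}.
\end{equation*}

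The final step is to transfer this bound through the smooth inverse $h^{-1}(y) = \sin^2(y/2)$. A second-order Taylor expansion at $g(x)$ gives
\begin{equation*}
\widehat f(x) - f(x) = \tfrac{1}{2}\sin(g(x))\big(\widehat h(x) - g(x)\big) + O\big((\widehat h(x) - g(x))^2\big),
\end{equation*}
and the double-angle identity $\sin(2\arcsin\sqrt{f(x)}) = 2\sqrt{f(x)(1-f(x))}$ identifies the first-order factor as the Bernoulli standard deviation $\sqrt{f(x)(1-f(x))}$, treating the singularities at $0$ and $1$ symmetrically. Combining with the preceding paragraph yields
\begin{equation*}
|\widehat f(x) - f(x)| \lesssim \sqrt{f(x)(1-f(x))}\,(\log n/n)^{\beta/(2\beta+1)} + (\log n/n)^{2\beta/(2\beta+1)}.
\end{equation*}
Since $f(x)(1-f(x)) \leq 1$ and $\beta/(2\beta+1) \leq 1/2$, the first summand is bounded by $(f(x)(1-f(x))\log n/n)^{\beta/(2\beta+1)}$, and since $2\beta/(2\beta+1) > \beta/(\beta+1)$, the second is bounded by $(\log n/n)^{\beta/(\beta+1)}$. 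Hence $|\widehat f(x) - f(x)| \lesssim \tilde r_{n,\beta}^B(f(x))$ uniformly in $x$, and uniformity over the compact set $[\beta_1,\beta_2] \times [R_1,R_2]$ follows because every constant depends continuously on $(\beta, R)$.

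The main obstacle is the sup-norm pointwise wavelet bound at the end of the second paragraph, which requires a careful level-by-level balancing as in the Poisson case; once this is in hand, the Taylor transfer through $h^{-1}$ is direct, with the Jacobian $\sqrt{f(1-f)}$ produced by the double-angle identity taking the place of the $\sqrt{f}$ that appeared in the Poisson proof of Theorem \ref{thm.est_link=sqrt} through $h^{-1}(y) = y^2/4$.
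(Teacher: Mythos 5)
Your proposal breaks down at the key step where you control $\widehat h - g$ for $g = h\circ f = 2\arcsin\sqrt{f}$. You assert that $\|g\|_{C^\beta} \leq C(\beta,R)$ when $\|f\|_{\mH_B^\beta}\leq R$, citing Lemma \ref{lem.arcsin_lip}, and hence that the wavelet coefficients satisfy the full $\beta$-smoothness decay $|d_{j,k}|\lesssim 2^{-j(\beta+1/2)}$ uniformly. That is not what the lemma says: its H\"older bound carries the factor $1/\min\big(\sqrt{f(x)(1-f(x))},\sqrt{f(y)(1-f(y))}\big)$, which is unbounded wherever $f$ approaches $0$ or $1$. The function $g$ is genuinely \emph{not} in $C^\beta$ for general $f\in\mH_B^\beta$; globally it only has smoothness of order $\beta/2$, which is exactly why Lemma \ref{lem.wav_decay_small_arcsin} gives the uniform decay $|d_{j,k}|\lesssim 2^{-\frac{j}{2}(\beta+1)}$ and reserves the faster decay $|d_{j,k}|\lesssim [f(x_0)(1-f(x_0))]^{-1/2}2^{-\frac{j}{2}(2\beta+1)}$ only for levels $j\geq j(x_0)$, with a constant that degenerates near $0$ and $1$. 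Consequently your claimed sup-norm bound $\sup_x|\widehat h(x)-g(x)|\lesssim(\log n/n)^{\beta/(2\beta+1)}$ is false; the best uniform rate achievable is $(\log n/n)^{\beta/(2\beta+2)}$.

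This is not a repairable detail: plugging the correct uniform rate $(\log n/n)^{\beta/(2\beta+2)}$ into your Taylor step gives $|\widehat f(x)-f(x)|\lesssim\sqrt{f(x)(1-f(x))}(\log n/n)^{\beta/(2\beta+2)}+(\log n/n)^{\beta/(\beta+1)}$, and the first term is \emph{larger} than the target $(f(x)(1-f(x))\log n/n)^{\beta/(2\beta+1)}$ precisely on the regime $f(x)(1-f(x))\gtrsim(\log n/n)^{\beta/(\beta+1)}$ where that target is supposed to dominate. You therefore cannot decouple the wavelet estimation error from $x$: the paper's argument balances the low-frequency $(\beta/2)$-decay against the high-frequency local $\beta$-decay at a level $J_2=J_2(x_0)$ that depends on $f(x_0)(1-f(x_0))$, producing the pointwise bound $|\widehat h(x_0)-g(x_0)|\lesssim\min\{(\log n/n)^{\beta/(2\beta+2)},[f(x_0)(1-f(x_0))]^{-1/(4\beta+2)}(\log n/n)^{\beta/(2\beta+1)}\}$, and only this function-value-dependent bound, fed into the Taylor expansion of $\sin^2(\cdot/2)$, yields $\tilde r_{n,\beta}^B$. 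Your Taylor/double-angle step is fine; the missing ingredient is the two-scale, pointwise wavelet analysis.
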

Again note that $\widehat{f}$ is fully adaptive, both spatially and over H\"older smoothness classes. We have a corresponding lower bound to the upper bound \eqref{eq.local_rate_arcsin},
\begin{align*}
	r_{n,\beta}^B(f(x)) = n^{-\frac{\beta}{\beta+1}} \vee ( f(x)(1-f(x))/n )^{\frac{\beta}{2\beta+1}} ,
\end{align*}
which again agrees with the upper bound $\tilde{r}_{n,\beta}^B(f(x))$ up to logarithmic factors.
\begin{thm}\label{thm.lower_bound_arcsin}
For any $\beta>0$, $R>0$, $x_0 \in [0,1]$ and any sequence $(f_n^*)_n$ with $\limsup_{n \rightarrow \infty}\|f_n^*\|_{\mH_B^\beta}<R$,
\begin{align*}
\liminf_{n \rightarrow \infty}  \inf_{\hat{f}_n(x_0)}  \sup_{\substack{f :\|f\|_{\mathcal{H}_B^\beta}\leq R \\ KL(f,f_n^*)\leq1 }}    \P_f \left(\frac{|\widehat f_n(x_0) - f(x_0)|}{r_{n,\beta}^B (f(x_0))} \geq C(\beta,R) \right) > 0 ,
\end{align*}
where the infimum is taken over all measurable estimators of $f(x_0)$.
\end{thm}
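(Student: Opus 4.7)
The proof follows the two-point method (cf.\ \cite{tsybakov2009}, Theorem 2.2(iii)), mirroring Theorem \ref{thm.lower_bound_sqrt}. For each $n$ I construct one perturbation $f_{1,n}$ of $f_n^*$ satisfying (i) $\|f_{1,n}\|_{\mH_B^\beta}\le R$, (ii) $\KL(P_{f_{1,n}},P_{f_n^*})=\tfrac{n}{2}\|2\arcsin\sqrt{f_{1,n}}-2\arcsin\sqrt{f_n^*}\|_2^2\le 1$, and (iii) $|f_{1,n}(x_0)-f_n^*(x_0)|\gtrsim r_{n,\beta}^B(f_n^*(x_0))$. Since the model, rate and class $\mH_B^\beta$ are all invariant under $f\leftrightarrow 1-f$, I restrict to $f_n^*(x_0)\le 1/2$ and take $f_{1,n}(t)=f_n^*(t)+\epsilon_n\,\psi((t-x_0)/h_n)$ with $\psi$ a fixed non-negative compactly supported bump of the form $(1-x^2)_+^\beta$ (mollified to achieve $C^{\lfloor\beta\rfloor}$ regularity while preserving its flatness), translated inward if $x_0$ lies at the boundary of $[0,1]$. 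The choice $\epsilon_n=ch_n^\beta$ with $c$ small propagates both the $C^\beta$-norm bound and the flatness condition $|f_{1,n}^{(j)}|^\beta\lesssim f_{1,n}^{\beta-j}$, whose binding case $j=1$ is equivalent to $\epsilon_n\lesssim h_n^\beta$. Combined with $\limsup_n\|f_n^*\|_{\mH_B^\beta}<R$, this yields (i) for $c$ small and $n$ large.

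Condition (ii) is established using $h'(x)=[x(1-x)]^{-1/2}$ and splitting on the size of $f_n^*(x_0)$. In the \emph{regular regime} $f_n^*(x_0)\gtrsim n^{-\beta/(\beta+1)}$, the flatness bound $|f_n^{*\prime}|\lesssim f_n^{*(\beta-1)/\beta}$ together with a sufficiently small constant in $h_n\asymp (f_n^*(x_0)(1-f_n^*(x_0))/n)^{1/(2\beta+1)}$ ensures $f_n^*(t)\asymp f_n^*(x_0)$ throughout $[x_0-h_n,x_0+h_n]$, so the mean value theorem gives $\KL\lesssim n\epsilon_n^2 h_n/[f_n^*(x_0)(1-f_n^*(x_0))]$; balancing with $\epsilon_n=h_n^\beta$ returns $\epsilon_n\asymp(f_n^*(x_0)(1-f_n^*(x_0))/n)^{\beta/(2\beta+1)}$, the second branch of $r_{n,\beta}^B$. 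In the \emph{irregular regime} $f_n^*(x_0)\lesssim n^{-\beta/(\beta+1)}$, the linearised bound diverges; I instead use $(h(a)-h(b))^2\le C|a-b|$ valid for $a,b\in[0,1-\eta]$, which follows from $|h(a)-h(b)|\le (2/\sqrt{\eta})|\sqrt{a}-\sqrt{b}|$ together with $(\sqrt{a}-\sqrt{b})^2\le|a-b|$. This gives $\KL\lesssim n\epsilon_n h_n$, and balancing with $\epsilon_n=h_n^\beta$ returns $\epsilon_n\asymp n^{-\beta/(\beta+1)}$, the first branch. Condition (iii) is immediate from $\psi(0)\asymp 1$, and $r_{n,\beta}^B(f_{1,n}(x_0))\asymp r_{n,\beta}^B(f_n^*(x_0))$ because the two branches of $r_{n,\beta}^B$ coincide at the transition $f(x_0)(1-f(x_0))\asymp n^{-\beta/(\beta+1)}$. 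Plugging $(f_n^*,f_{1,n})$ into Tsybakov's two-point inequality concludes.

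The main technical difficulty I anticipate lies in the bump selection: ensuring $|f_{1,n}^{(j)}|^\beta\lesssim f_{1,n}^{\beta-j}$ uniformly for all $1\le j<\beta$ on the bump, particularly where $f_n^*$ vanishes in the support and $\psi^{(j)}$ alone must supply the flatness budget. The explicit template $\psi(x)=(1-x^2)_+^\beta$ handles $j=1$ via $|\psi'(x)|\lesssim\psi(x)^{(\beta-1)/\beta}$, but the higher $j$ constraints require a careful mollification (e.g.\ convolution with a sub-$h_n$ scale kernel) to obtain the necessary $C^{\lfloor\beta\rfloor}$ smoothness without spoiling the flatness ratios near the bump edges; the remainder is a routine adaptation of the Poisson calculation once the symmetric boundary point $1$ is absorbed by the initial reduction $f_n^*(x_0)\le 1/2$.
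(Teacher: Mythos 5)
Your proposal is correct and follows the paper's proof essentially step by step: reduce to $f_n^*(x_0)\le 1/2$ by the $f\leftrightarrow 1-f$ symmetry of the model, the rate, and (up to an irrelevant constant) the class $\mH_B^\beta$; add a compactly supported bump of width $h_n$ and height $\asymp h_n^\beta$ at $x_0$; split into the two bandwidth regimes $h_n\asymp(f_n^*(x_0)(1-f_n^*(x_0))/n)^{1/(2\beta+1)}$ and $h_n\asymp n^{-1/(\beta+1)}$; bound the Kullback--Leibler divergence via the mean value theorem for $\arcsin$ in the regular regime (using a local comparability lemma for $f_n^*$ to freeze the denominator $f_n^*(1-f_n^*)$) and via the $1/2$-H\"older bound $(\sqrt{a}-\sqrt{b})^2\le|a-b|$ together with a bounded $\arcsin$-derivative away from $1$ in the irregular regime; then invoke Tsybakov's two-point reduction. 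The one genuine departure from the paper is the choice of bump: you propose a mollified $(1-x^2)_+^\beta$, whereas the paper simply uses the $C^\infty$ function $K_0(x)=\exp(-1/(1-x^2))\mathbf{1}\{|x|\le1\}$, for which the required flatness $|K_0^{(j)}|\lesssim K_0^{(\beta-j)/\beta}$ holds for every $j$ and every $\beta$ by a one-line calculation (Lemma \ref{lem.mollifier}); this avoids the mollification headaches you correctly anticipate. The paper also makes explicit the triangle-inequality step $\|f_n^*+\text{bump}\|_{\mH^\beta}\le\|f_n^*\|_{\mH^\beta}+\|\text{bump}\|_{\mH^\beta}$, which requires citing that $\|\cdot\|_{\mH^\beta}$ is a norm in the sense of \cite{RaySchmidt-Hieber2015c}, a point you glossed over but which is needed since $\mH^\beta$ is only a cone; and it writes out the mirrored hypothesis $f_{1,n}^- = f_n^* - Rh_n^\beta K((\cdot-x_0)/h_n)$ for $f_n^*(x_0)>1/2$ rather than invoking symmetry abstractly.
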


\subsection{The Gaussian variance case $h(x)= 2^{-1/2} \log x$}\label{sec.log_case}

For the link function $h(x)= 2^{-1/2} \log x$, model \eqref{eq.mod_link=h_direct} can be written
\begin{align}
	dY_t = \frac{1}{\sqrt{2}}\log (f(t)) dt + n^{-1/2} dW_t, \quad t\in [0,1],
	\label{eq.mod_link=log}
\end{align}
which is motivated by Gaussian variance estimation \cite{Grama1998} and spectral density estimation \cite{golubev2010} (see Section \ref{sec.specific_cases} for more details). We show that the rate of estimation for $f \in \mH^\beta$ in \eqref{eq.mod_link=log} at a point $x \in [0,1]$ is
\begin{align}
	\tilde{r}_{n,\beta}^G (f(x)) = 
	\begin{cases}
	 f(x)  &\text{if} \ \  f(x)\leq n^{-\beta}M_n, \\
	\Big( f(x)^2\frac{\log n}{n} \Big)^{\frac{\beta}{2\beta+1}} &\text{otherwise},
	\end{cases}
	\label{eq.local_rate_log}
\end{align}
where $M_n \rightarrow \infty$ grows strictly subpolynomially, that is, $M_n \ll n^{\delta}$ for any $\delta>0.$ For the exact expression for $M_n$, see the proof of Theorem \ref{thm.est_link=log} below.

The plug-in estimator $e^{\widehat{\sqrt{2} h}}$ is unable to adapt to small function values; small fluctuations in the function are magnified rendering estimation by wavelet thresholding unstable. However, this magnification also permits easier detection of this regime and we take advantage of this to propose a modified (non-adaptive) estimator. Define the local averaging process:
\begin{align*}
Z_n(x) = n \int_{x-\frac{1}{2n}}^{x+\frac{1}{2n}} dY_t = \frac{n}{\sqrt{2}} \int_{x-\frac{1}{2n}}^{x+\frac{1}{2n}} \log f(t) dt + \epsilon_n (x),\quad x\in\left[ \frac{1}{2n}, 1-\frac{1}{2n} \right],
\end{align*}
where $\epsilon_n (x) = \sqrt{n}(W_{x+\frac{1}{2n}} - W_{x-\frac{1}{2n}}) \sim N(0,1)$. For the boundary values, we truncate the integral so that for $x\in[0,1/(2n)]$,
\begin{align*}
Z_n(x) = \frac{1}{x+1/(2n)} \int_0^{x+\frac{1}{2n}} dY_t,
\end{align*}
with the corresponding upper truncation for $x\in[1-1/(2n),1]$ and $\epsilon_n(x)$ extended to these ranges. The process $Z_n$ is used to detect which regime of $\tilde{r}_{n,\beta}^G (f(x))$ occurs, since then the zero estimator and plug-in estimator are optimal on the first and second regimes respectively. This yields the estimator 
\begin{align*}
\widehat{f}(x) = e^{\sqrt{2}\widehat{h}(x)}  \mathbf{1} \big( x :Z_n(x) \geq - 2^{-1/2}\beta\log n + \sigma \sqrt{2\log n} \big)  ,
\end{align*}
where $\widehat{h}$ is as in \eqref{eq.hard_wav_est} and $\sigma > 1$.
\begin{thm}\label{thm.est_link=log}
For any $\beta>0$ and $0< R_1 \leq R_2 < \infty$, the estimator $\widehat{f}$ satisfies
\begin{equation*}
\inf_{R_1 \leq R \leq R_2}  \inf_{f :\|f\|_{\mH^\beta}\leq R}
\P_f \left( \sup_{x \in [0,1]} \frac{|\widehat{f}(x) - f(x)|}{\tilde{r}_{n,\beta}^G(f(x))} \leq C(\beta,R_1,R_2) \right) \geq 1 -\frac{3n^{-(\tau^2/8-1)} }{\sqrt{2\log n}} - C(\sigma) n^{-\tfrac{\sigma-1}{2}} ,
\end{equation*}
where $\tilde{r}_{n,\beta}^G(f(x))$ is given in \eqref{eq.local_rate_log}.
\end{thm}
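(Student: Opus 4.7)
The plan is to handle the two regimes of $\tilde{r}_{n,\beta}^G$ separately, relying on the local-averaging statistic $Z_n(x)$ to correctly select between the zero estimator and the plug-in estimator $e^{\sqrt{2}\widehat h(x)}$, while exploiting the $\mH^\beta$ flatness to control both the smoothness of $\log f$ and the approximation of $\int \log f$ by $\log f(x)$ over windows of width $1/n$.

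First I would establish a function-dependent pointwise bound for the wavelet thresholding step, of the form
\begin{equation*}
|\widehat h(x) - 2^{-1/2} \log f(x)| \lesssim \left( \frac{\log n}{n f(x)^2} \right)^{\beta/(2\beta+1)}
\end{equation*}
on an event $A_1$ of probability $\geq 1 - 3n^{-(\tau^2/8-1)}/\sqrt{2\log n}$, valid on the ``large regime'' $f(x)\gtrsim n^{-\beta}M_n$. This reuses the wavelet-thresholding technology already developed for Theorems~\ref{thm.est_link=sqrt} and \ref{thm.est_link=arcsin}, the new ingredient being a Lipschitz-type lemma for $\log$ on $\mH^\beta$ (analogous to Lemmas~\ref{lem.arcsin_lip}/\ref{lem.log_lip} announced earlier) which converts the $\mH^\beta$-norm of $f$ into control of $\log f$ on neighborhoods where $f$ stays comparable to $f(x)$.

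Next I would analyze the detector $Z_n(x)$. A simple Gaussian maximal inequality gives $\sup_x |\epsilon_n(x)| \leq \sigma\sqrt{2\log n}$ on an event $A_2$ of probability $\geq 1 - C(\sigma)n^{-(\sigma-1)/2}$. Using the flatness property $|f^{(j)}|^\beta \leq \kappa f^{\beta-j}$ one bounds $f(t)\leq C(f(x) + |t-x|^\beta)$ in a neighborhood of $x$, which when plugged into $\tfrac{n}{\sqrt 2}\int_{x-1/(2n)}^{x+1/(2n)} \log f(t)\,dt$ yields:
\begin{itemize}
\item In the small regime $f(x)\leq n^{-\beta}M_n$, the deterministic part of $Z_n(x)$ is $\leq -2^{-1/2}\beta\log n + o(\log n)$, so $Z_n(x)$ falls below the threshold on $A_2$ and the indicator vanishes, giving $\widehat f(x)=0$ and error $f(x)=\tilde r_{n,\beta}^G(f(x))$.
\item In the large regime, the same argument (bounding $\log f$ from below using $f(t)\geq f(x)/2$ on the averaging window, again by flatness once $f(x)\gg n^{-\beta}$) shows that $Z_n(x)$ exceeds the threshold, so the indicator equals $1$.
\end{itemize}

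Finally, on the large regime with indicator equal to $1$, I would combine Step~1 with the mean-value bound
\begin{equation*}
|e^{\sqrt 2 \widehat h(x)}-f(x)| \leq \sqrt 2\, e^{\sqrt 2 \max(\widehat h(x), 2^{-1/2}\log f(x))}\cdot |\widehat h(x)-2^{-1/2}\log f(x)|,
\end{equation*}
and on the event $A_1$ the max factor is $\lesssim f(x)$, so the error is $\lesssim f(x)\cdot(\log n/(n f(x)^2))^{\beta/(2\beta+1)} = (f(x)^2 \log n/n)^{\beta/(2\beta+1)}$, matching the second branch of $\tilde r_{n,\beta}^G$. The main obstacle will be Step~1: the derivatives of $\log f$ for $f\in\mH^\beta$ scale like $f^{-1/\beta}$, so the classical $L^\infty$ wavelet-thresholding rate cannot be applied globally, and one must carefully localize the bias/variance analysis to a window where $f$ is comparable to $f(x)$. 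This localization, together with the precise choice of the threshold $n^{-\beta}M_n$ (with $M_n$ growing subpolynomially) to ensure that the detector cuts off \emph{before} $\log f$ becomes too singular for wavelet thresholding to work, is the delicate engineering of the argument.
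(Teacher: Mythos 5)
Your proposal follows essentially the same route as the paper: a Gaussian maximal inequality for $\sup_x|\epsilon_n(x)|$, the flatness property (Lemma~\ref{lem.local_func_size}) to localise both the detector analysis and the wavelet-coefficient bounds, and an exponential mean-value step to transfer the error from $\widehat h$ to $e^{\sqrt 2\widehat h}$. Two points need repair before the argument closes. First, your Step~1 bound carries the wrong power of $f(x)$: Lemma~\ref{lem.wav_decay_log} gives $|d_{j,k}|\lesssim f(x)^{-1}2^{-j(2\beta+1)/2}$, so the optimal cut-off is $2^{J(x)}\asymp f(x)^{-2/(2\beta+1)}(n/\log n)^{1/(2\beta+1)}$ and the correct intermediate bound is $|\widehat h(x)-2^{-1/2}\log f(x)|\lesssim f(x)^{-1/(2\beta+1)}(\log n/n)^{\beta/(2\beta+1)}$, not $(\log n/(nf(x)^2))^{\beta/(2\beta+1)}$. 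Indeed the equality in your last display fails: $f(x)\cdot(\log n/(nf(x)^2))^{\beta/(2\beta+1)}=f(x)^{1/(2\beta+1)}(\log n/n)^{\beta/(2\beta+1)}$, which is \emph{not} $(f(x)^2\log n/n)^{\beta/(2\beta+1)}$ unless $\beta=1/2$; with the corrected bound the product is exactly $(f(x)^2\log n/n)^{\beta/(2\beta+1)}$. Second, the detector's outcome is not a clean dichotomy at $f(x)\asymp M_nn^{-\beta}$. On the event $B_n$ the paper only pins down the indicator for $f(x)\geq Ce^{(3\sigma+1)\sqrt{\log n}}n^{-\beta}$ (fires) and $f(x)\leq ce^{(\sigma-1)\sqrt{\log n}}n^{-\beta}$ (does not fire); in the transition zone between these either outcome is possible. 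You must therefore (a) make the wavelet bound valid down to $f(x)\geq ce^{(\sigma-1)\sqrt{\log n}}n^{-\beta}$, not just $f(x)\gtrsim M_nn^{-\beta}$, by checking $2^{J(x)}\gg 2^{j(x)}$ there, and (b) verify that on the transition zone the plug-in error $(f(x)^2\log n/n)^{\beta/(2\beta+1)}$ is still $\lesssim f(x)=\tilde r_{n,\beta}^G(f(x))$ --- which is exactly where the subpolynomial growth of $M_n$ is used. You correctly flag this as the ``delicate engineering,'' but the proof must spell it out.
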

We note that $\widehat{f}$ is spatially adaptive, but does not adapt over H\"older classes. The difficulty in adapting to smoothness compared to the Poisson and Bernoulli models is due to the nature of the irregular point 0. In the other models, the singularity reduces the smoothness inherited from $f$, but still maps small values to small values. On the contrary, $h(x) = 2^{-1/2} \log x$ is unbounded near 0, which causes the difference in the rate for small values. This highlights how the nature of the non-linearity affects estimation in the model.

The corresponding lower bound is
\begin{align*}
	r_{n,\beta}^G(f(x)) = f(x) \wedge ( f(x)^2/n )^{\frac{\beta}{2\beta+1}},
\end{align*}
which agrees with the upper bound $\tilde{r}_{n,\beta}^G(f(x))$ in \eqref{eq.local_rate_log} up to subpolynomial factors.
\begin{thm}\label{thm.lower_bound_log}
For any $\beta>0$, $R>0$, $x_0 \in [0,1]$ and any sequence $(f_n^*)_n$ with $\limsup_{n \rightarrow \infty}\|f_n^*\|_{\mH^\beta}<R$,
\begin{align*}
\liminf_{n \rightarrow \infty}  \inf_{\hat{f}_n(x_0)} \sup_{\substack{f:\|f\|_{\mH^\beta}\leq R \\ KL(f,f_n^*)\leq1 }}    \P_f \left(\frac{|\hat{f}_n(x_0) - f(x_0)|}{r_{n,\beta}^G (f(x_0))} \geq C(\beta,R) \right) > 0 ,
\end{align*}
where the infimum is taken over all measurable estimators of $f(x_0)$.
\end{thm}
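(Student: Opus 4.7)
The plan is to apply the standard two-point Le Cam method, exploiting the fact that in the Gaussian white noise model \eqref{eq.mod_link=log} the Kullback--Leibler divergence reduces to $\KL(P_f,P_g) = \tfrac{n}{4}\|\log f - \log g\|_2^2$. Writing $v_n = f_n^*(x_0)$, we seek, for each $n$, a competitor $g_n$ with $\|g_n\|_{\mH^\beta}\leq R$, $\KL(P_{f_n^*},P_{g_n})\leq 1$, and $|g_n(x_0)-f_n^*(x_0)| \gtrsim r_{n,\beta}^G(v_n)$. Then Tsybakov's two-point inequality (Theorem 2.2 in \cite{tsybakov2009}) immediately gives a constant lower bound on the probability of error for any estimator, uniformly over the local KL-neighborhood of $f_n^*$. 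The key observation is that since $\limsup_n \|f_n^*\|_{\mH^\beta}<R$, there is slack in the norm constraint, so small smooth perturbations of $f_n^*$ remain admissible.

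Fix a smooth non-negative bump $\psi$ compactly supported in $(-1,1)$ with $\psi(0)=1$, and set $g_n(x) = f_n^*(x) + \eta_n \, \psi((x-x_0)/h_n)$ for parameters $h_n, \eta_n$ chosen per regime. In the \emph{second regime} $v_n \geq n^{-\beta}$, take $h_n \asymp (v_n^2/n)^{1/(2\beta+1)}$ and $\eta_n \asymp h_n^\beta \asymp (v_n^2/n)^{\beta/(2\beta+1)}$. The flatness of $f_n^*$ controlled by $|f|_{\mH^\beta}$ ensures $f_n^* \asymp v_n$ on $[x_0-h_n, x_0+h_n]$ (the relevant scale $h_n$ is below the flatness scale $v_n^{1/\beta}$ precisely when $v_n \geq n^{-\beta}$), so $|\log(g_n/f_n^*)| \lesssim \eta_n/v_n$ on the bump support and a direct calculation yields $\|\log g_n -\log f_n^*\|_2^2 \lesssim (\eta_n/v_n)^2 h_n \asymp n^{-1}$, giving $\KL \lesssim 1$ with a constant controlled by the amplitude of $\psi$. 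In the \emph{first regime} $v_n < n^{-\beta}$, take $h_n \asymp v_n^{1/\beta}$ and $\eta_n \asymp v_n$, so the pointwise separation is $\asymp v_n = r_{n,\beta}^G(v_n)$; the ratio $g_n/f_n^*$ stays bounded, hence $\|\log g_n - \log f_n^*\|_2^2 \lesssim h_n \lesssim n^{-1}$ and again $\KL \lesssim 1$.

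The main technical obstacle is verifying that $g_n \in \mH^\beta$ with norm at most $R$, particularly the flatness constraints $\||f^{(j)}|^\beta/|f|^{\beta-j}\|_\infty \leq C$. Away from $\supp \psi_n$ nothing changes, while on the bump support the bump's $j$-th derivative is of order $\eta_n h_n^{-j}$, whose $\beta$-th power is $\eta_n^\beta h_n^{-\beta j}$; in both regimes this matches $g_n^{\beta-j}$ up to constants (in the second regime because $g_n \gtrsim \eta_n$ and $\eta_n \asymp h_n^\beta$; in the first regime because $g_n \asymp v_n$ and $v_n h_n^{-j} \asymp v_n^{1-j/\beta}$ so $(v_n h_n^{-j})^\beta \asymp v_n^{\beta-j}$). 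Choosing the bump amplitude sufficiently small, the triangle-type bound $(a+b)^\beta \leq 2^{\beta-1}(a^\beta+b^\beta)$ combined with the strict inequality $\limsup \|f_n^*\|_{\mH^\beta}<R$ absorbs all cross terms and keeps $\|g_n\|_{\mH^\beta}\leq R$ for $n$ large. With (i)--(iii) verified, Le Cam's lemma completes the proof.
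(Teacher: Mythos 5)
Your construction matches the paper's exactly: in case $(A)$ ($f_n^*(x_0)>n^{-\beta}$) the paper takes $h_n=c_0(f_n^*(x_0)^2/n)^{1/(2\beta+1)}$ with perturbation of amplitude $Rh_n^\beta\asymp(f_n^*(x_0)^2/n)^{\beta/(2\beta+1)}$, and in case $(B)$ it takes $h_n=c_0 f_n^*(x_0)^{1/\beta}$ with amplitude $Rh_n^\beta\asymp f_n^*(x_0)$; the KL computations (using $\KL=\tfrac{n}{4}\|\log f-\log g\|_2^2$, the change of variables and Lemma \ref{lem.local_func_size} to replace $f_n^*$ by $f_n^*(x_0)$ on the bump support, and $\log(1+y)\leq y$) are identical in structure. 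The one genuine divergence is in verifying that the alternative stays in the $\mH^\beta$-ball. The paper reuses the argument from Theorem \ref{thm.lower_bound_sqrt}, which relies on a \emph{specific} bump $K_0$ from Lemma \ref{lem.mollifier} satisfying the flatness property $|K_0^{(j)}|\leq C|K_0|^{(\beta-j)/\beta}$; one then bounds $\|g_n\|_{\mH^\beta}$ via the triangle-type inequality for $\|\cdot\|_{\mH^\beta}$ (Theorem 2.1 of \cite{RaySchmidt-Hieber2015c}) applied to $f_n^*$ and the perturbation separately. You instead use an arbitrary smooth bump and check the flatness condition of $g_n$ directly by noting that Lemma \ref{lem.local_func_size} keeps $g_n\geq f_n^*\gtrsim f_n^*(x_0)$ on the bump support, so the derivatives of the perturbation, of order $\eta_n h_n^{-j}$, are dominated by $g_n^{(\beta-j)/\beta}$. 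This works here because in \emph{both} regimes of this theorem the bandwidth $h_n$ is at or below the flatness scale of $f_n^*$ (which forces $f_n^*\asymp f_n^*(x_0)$ on the bump); it would \emph{not} work in case $(B)$ of the Poisson lower bound (Theorem \ref{thm.lower_bound_sqrt}), where $h_n\asymp n^{-1/(\beta+1)}$ can exceed the flatness scale and the special bump becomes necessary, so be careful about treating this as a general-purpose replacement. Also, to make the direct check airtight, you need to push the small constants $c_0,\eta$ through the cross terms so that $\|g_n\|_{\mH^\beta}$ lands inside the slack $R-\limsup_n\|f_n^*\|_{\mH^\beta}$; your sketch gestures at this correctly but leaves it at the "$\lesssim$" level.
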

Since $h(x) = 2^{-1/2} \log x$ magnifies changes near 0, the first regime in $r_{n,\beta}^G(f(x))$ depends much more closely on the true function value than in the Poisson or Binomial cases. In those cases one can always construct an alternative that has pointwise distance $n^{-\beta/(\beta+1)}$ from $f_n^*$ and has Kullback-Leibler distance 1 from $f_n^*$. In contrast here, any alternative that is more than a constant multiple of $f_n^*(x_0)$ away from $f_n^*$ at $x_0$ will have diverging Kullback-Leibler distance from $f_n^*$. This is due to the extremely sensitive irregular point of $h$.

\section{Extension to inverse problems}
\label{sec.extension}

In the previous section, we derived local rates for $f$ in the direct case. For the specific link functions considered, we therefore have explicit and rate optimal bounds $\widetilde r_{n,\beta}(f(x))$ such that $|\widehat{f}(x) - f(x)|\leq \widetilde r_{n,\beta}(f(x))$ for all $x\in [0,1]$ with high probability. Except for the logarithmic link function, the rates are adaptive, that is, the smoothness index of $f$ is not necessarily assumed known. 

Recall the full inverse problem \eqref{eq.mod_link=h}. In this case the drift function is $h\circ Kf$ and by applying the wavelet thresholding procedure described above, we obtain an estimator $\widehat{Kf}$ for $Kf,$ which, as explained in the introduction, can be thought of as a data pre-smoothing procedure. Viewing $Y^\delta(t) := \widehat{Kf}(t)$ as the new observation in a deterministic inverse problem, we can control the local noise level $\delta$ in the sense that
\begin{align*}
	|Y^\delta (t) - Kf(t) | \leq \delta(t):=\widetilde r_{n,\beta}(Kf(t)), \quad \quad \text{for all}  \ t\in [0,1],
	\end{align*}
with high probability. In order to solve the deterministic inverse problem, reconstructing the function $f$ from the pre-smoothed data $(Y^{\delta}(t))_{t\in [0,1]},$ we need to be able to compute the order of the noise level. This can then be used for the choice of the regularization parameter or stopping criteria in the regularization procedure. The next result shows that with high probability, a plug-in estimator for $Kf$ does not change the order.

\begin{thm}
\label{thm.plug_in_bd}
On the event $|Y^\delta(t)-Kf(t)|\leq C\widetilde r_{n,\beta}^X (Kf(t)),$ where $X \in\{P, B, G\}$ is one of the three scenarios considered in Sections \ref{sec.Poisson_case}-\ref{sec.log_case}, there is a constant $\overline C$ that is independent of $t,$ such that 
\begin{align*}
	\overline C^{-1} \widetilde r_{n,\beta}^X(Y^\delta(t)) 
	&\leq \widetilde r_{n,\beta}^X(Kf(t)) \leq \overline C\widetilde r_{n,\beta}^X(Y^\delta(t)), \quad X\in \{P,B\}
\end{align*}
and
\begin{align*}
	\overline C^{-1} \widetilde r_{n,\beta}^G(Y^\delta(t)) 
	&\leq \widetilde r_{n,\beta}^G(Kf(t)  ) \leq \overline C\widetilde r_{n,\beta}^G(Y^\delta(t)) + n^{-\beta} M_n,
\end{align*}
with $M_n$ as in \eqref{eq.local_rate_log}.
\end{thm}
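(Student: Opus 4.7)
The approach is a regime-by-regime case analysis driven by a single observation: each rate function $u \mapsto \widetilde r_{n,\beta}^X(u)$ is monotone non-decreasing in $u$, and in its nonparametric regime satisfies $\widetilde r_{n,\beta}^X(u) \lesssim u$ (respectively $\lesssim u \wedge (1-u)$ for $X=B$). Together with the hypothesis $|Y^\delta(t)-Kf(t)| \leq C \widetilde r_{n,\beta}^X(Kf(t))$, this relative-error property forces $Y^\delta(t) \asymp Kf(t)$ whenever $Kf(t)$ lies in the nonparametric regime, after which rate comparability is automatic. In the small-value regime, where $\widetilde r_{n,\beta}^X$ is constant (cases $X=P,B$) or equals $u$ (case $X=G$), the perturbation is absolutely controlled and possible regime crossings must be handled explicitly.

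For the Poisson case, set $a_n = (\log n/n)^{\beta/(\beta+1)}$ and choose a large constant $M_0 = M_0(C,\beta)$ such that $C (u/a_n)^{-(\beta+1)/(2\beta+1)} \leq 1/2$ whenever $u \geq M_0 a_n$. If $Kf(t) \leq M_0 a_n$, then $|Y^\delta(t) - Kf(t)| \leq C a_n$ keeps $Y^\delta(t) \in [0, (M_0+C) a_n]$, and since both rates reduce to a constant multiple of $a_n$ on this range the claim is immediate. If $Kf(t) > M_0 a_n$, the choice of $M_0$ yields $Y^\delta(t) \in [Kf(t)/2, 3Kf(t)/2]$, so the second-regime rate, being increasing in its argument, agrees up to constants. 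The Bernoulli case runs identically after replacing $u$ by $u \wedge (1-u)$ and using symmetry at the two irregular points $0$ and $1$; the single bound $|Y^\delta(t) - Kf(t)| \leq C \widetilde r_{n,\beta}^B(Kf(t))$ controls $Y^\delta(t)$ and $1 - Y^\delta(t)$ simultaneously.

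For the Gaussian case, set $t_n = n^{-\beta} M_n$. The analogous relative-error estimate $\widetilde r_{n,\beta}^G(u)/u \lesssim (\log n)^{\beta/(2\beta+1)} M_n^{-1/(2\beta+1)}$ holds whenever $u > t_n$, and the subpolynomial growth of $M_n$ (together with the admissible choice $M_n \gtrsim (\log n)^{\beta}$ made in \eqref{eq.local_rate_log}) bounds this ratio by a constant. Hence for $Kf(t) > t_n$ the same argument as above gives $Y^\delta(t) \asymp Kf(t)$ and both inequalities follow. For $Kf(t) \leq t_n$ we have $\widetilde r_{n,\beta}^G(Kf(t)) = Kf(t) \leq n^{-\beta}M_n$, which is absorbed by the additive term on the right of the upper bound. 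For the matching lower bound $\widetilde r_{n,\beta}^G(Y^\delta(t)) \lesssim \widetilde r_{n,\beta}^G(Kf(t))$ one splits on the regime of $Y^\delta(t)$: if $Y^\delta(t) \leq t_n$ both rates equal their arguments and the ratio is at most $1+C$; if $Y^\delta(t) > t_n$, then necessarily $Kf(t) \geq t_n/(1+C)$, so $Kf(t) \asymp t_n$, and the second-regime rate at $Y^\delta(t) \leq (1+C)Kf(t) \lesssim t_n$ is at most a constant times $t_n \asymp Kf(t)$.

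The main obstacle is the Gaussian case near the regime boundary $t_n$: the additive $n^{-\beta}M_n$ term on the right of the upper bound is genuinely needed, reflecting that $\widetilde r_{n,\beta}^G(Kf(t))$ vanishes with $Kf(t)$ whereas $\widetilde r_{n,\beta}^G(Y^\delta(t))$ need not. The technical linchpin is verifying that the subpolynomial $M_n$ is large enough so that the two branches of $\widetilde r_{n,\beta}^G$ are compatible at the threshold; once this is established all remaining bookkeeping reduces to routine monotonicity arguments.
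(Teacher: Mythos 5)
Your overall strategy coincides with the paper's: exploit the relative‑error bound $\widetilde r_{n,\beta}^X(u)\lesssim u$ in the nonparametric regime to force $Y^\delta(t)\asymp Kf(t)$ there, then treat the small‑value regime where $\widetilde r_{n,\beta}^X$ is essentially constant (Poisson/Bernoulli) or equal to $u$ (Gaussian). Your Poisson argument via the cutoff $M_0$ is equivalent to the paper's inequality \eqref{eq.techn_res_tilde_rn_proof} with $Q=2C$ followed by its four‑case check, and the Bernoulli reduction to $u\wedge(1-u)$ is exactly what the paper does via $\widetilde r_{n,\beta}^B(u)\asymp\widetilde r_{n,\beta}^P(u\wedge(1-u))$. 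Those two parts are correct.

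The Gaussian part, however, has a gap at the sentence ``for $Kf(t)>t_n$ the same argument as above gives $Y^\delta(t)\asymp Kf(t)$ and both inequalities follow,'' and your closing ``linchpin'' remark has the direction of the requirement reversed. The two branches of $\widetilde r_{n,\beta}^G$ are \emph{not} compatible at $t_n=n^{-\beta}M_n$: the left branch equals $t_n$ there while the right branch equals $(t_n^2\log n/n)^{\beta/(2\beta+1)}=t_n\cdot(\log n)^{\beta/(2\beta+1)}M_n^{-1/(2\beta+1)}$. Continuity would need $M_n\asymp(\log n)^\beta$, but the paper's $M_n=Ce^{(3\sigma+1)\sqrt{\log n}}\gg(\log n)^{\beta}$, so $\widetilde r_{n,\beta}^G$ jumps \emph{down} at $t_n$ by a factor tending to $0$, and is therefore not monotone. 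Consequently, $Y^\delta(t)\asymp Kf(t)$ does not imply comparability of the rates: if $Kf(t)$ lies just above $t_n$ and $Y^\delta(t)$, which can be as low as $(1-o(1))Kf(t)$ on the event, lands just below $t_n$, then $\widetilde r_{n,\beta}^G(Y^\delta(t))=Y^\delta(t)\asymp t_n$ while $\widetilde r_{n,\beta}^G(Kf(t))\asymp t_n\cdot(\log n)^{\beta/(2\beta+1)}M_n^{-1/(2\beta+1)}\ll t_n$, so the left inequality $\overline C^{-1}\widetilde r_{n,\beta}^G(Y^\delta(t))\leq\widetilde r_{n,\beta}^G(Kf(t))$ cannot hold with a fixed constant $\overline C$. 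Your explicit lower‑bound sub‑argument only treats the case $Kf(t)\le t_n$, so this boundary case is genuinely left open. The paper's own proof is equally terse here (it only records $\widehat g\le(1+C)g$ and $g\le 2\widehat g$ for $g\ge t_n$ and then declares the proof complete, without discussing $\widehat g<t_n<g$), so the gap is shared; but you should not cite ``compatibility of the two branches'' as the reason the argument works, since the paper's choice of $M_n$ makes the jump at $t_n$ worse, not better.
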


We can therefore estimate the order of the noise level using the plug-in estimator $\widetilde r_{n,\beta}(Y^\delta(t))$ since by Theorems \ref{thm.est_link=sqrt}, \ref{thm.est_link=arcsin}, and \ref{thm.est_link=log}, $|Y^\delta(t)-Kf(t)|\leq C\widetilde r_{n,\beta} (Kf(t))$ holds with high probability. 

The order of the noise level still depends on the smoothness index $\beta$ of $Kf,$ which might be unknown. At the moment there is no completely satisfactory answer to this problem and we briefly outline a practical approach below. If the operator $K$ is $s$-times integration, then $K: \mH^\alpha \rightarrow \mH^{\alpha+s}$ for any $\alpha>0,$ as proved in Theorem 4.1 of \cite{RaySchmidt-Hieber2015c}. If $f$ has known smoothness $\alpha$, we can then conclude that $\beta=\alpha+s.$ For convolution type operators, we believe that similar results hold in view of the Fourier multiplier theorems on Besov spaces (and thus also for H\"older spaces) in \cite{girardi2003}, Section 4 and in \cite{arendt2004}.

From a practical point of view, we can guess the smoothness of $Kf$ by studying how many empirical wavelet coefficients are kept in the series estimator \eqref{eq.hard_wav_est}. For smoother signals, the wavelet coefficients decay more quickly and so fewer coefficients are taken into account by the estimator. Estimators based on such ideas typically work in practice but have poor properties in the minimax sense over all H\"older functions, since these function spaces contain rough functions that look much smoother at many resolution levels.

\section{Details and examples for model \eqref{eq.mod_link=h}}
\label{sec.specific_cases}

To motivate model \eqref{eq.mod_link=h}, let us first consider a regular parametric problem where we observe $n$ i.i.d. copies of a random variable with distribution $P_\theta$, where $\theta\in \Theta \subset \R$ is the parameter of interest. The Maximum Likelihood Estimator (MLE) $\widehat{\theta}^{\MLE}$ is under weak assumptions efficient and asymptotically normal in the sense that $\widehat{\theta}^{\MLE}= \mathcal{N}(\theta, (nI(\theta))^{-1})+o_P(n^{-1/2}),$ where $I(\theta)$ denotes the Fisher information and $\mathcal{N}(\mu,\sigma^2)$ is a normal random variable with mean $\mu$ and variance $\sigma^2.$ If we can find a function $h:\R \rightarrow \R$ satisfying
\begin{align}
	h'(\theta) = \sqrt{I(\theta)},
	\label{eq.VST}
\end{align}
then, using the delta method (e.g. Chapter 3 of \cite{vanderVaart1998}), the transformed MLE satisfies $h(\widehat{\theta}^{\MLE})= \mathcal{N}(h(\theta), 1/n)+o_P(n^{-1/2}).$ The remainder term does not carry any information about $\theta$ that is relevant for asymptotic statements and we have thus rewritten the original problem as a Gaussian model, where we observe 
\begin{align}
 Y=h(\theta)+n^{-1/2}\epsilon \quad \text{with} \ \ \epsilon\sim \mathcal{N}(0,1).
 \label{eq.param_trafo}
 \end{align} 
Since the Fisher information is positive for regular problems, $h$ is a strictly monotone increasing function. By rewriting the model in the way just described, we do not lose information as the sample size tends to infinity and we may thus view the transformed model \eqref{eq.param_trafo} as an equivalent representation of the original problem. 

This concept can be extended to nonparametric problems but the situation becomes more subtle, since the MLE does not in general exist. A very strong notion of approximation of models is convergence in Le Cam distance (cf. \cite{Grama1998, nussbaum1996}). Suppose we observe independent random variables $(X_i)_{i=1,\ldots,n}$ following a distribution from an exponential family with real-valued parameters $\theta_i = f(\tfrac in)$, where $f :[0,1]\rightarrow \R$ is an unknown regression function to be estimated. Grama and Nussbaum \cite{Grama1998} showed that under some minimal smoothness assumptions, this model converges in Le Cam distance to the Gaussian model where we observe the path $(Y_t)_{t\in [0,1]}$ with
\begin{align*}
	dY_t = (h \circ f)(t) dt + n^{-\frac 12} dW_t, \quad t\in [0,1],
\end{align*}
where the link function $h:\R \rightarrow \R$  satisfies \eqref{eq.VST}. This choice of $h$ is related to the \textit{variance-stabilizing transformation} for the exponential family (see e.g. \cite{Brown2010} for more details). Such a result may be viewed as a nonparametric analogue of the parametric approximation explained above. The approximation becomes better for smoother functions $f$ and deteriorates as we approach irregular points of $h$, where $h'$ tends to infinity. For the specific case of Poisson intensity estimation, it has been shown in \cite{RaySchmidt-Hieber2015b} that the convergence with respect to Le Cam distance still holds true near the irregular point under certain assumptions. 

We now extend this to the full notion of inverse problems. Suppose we observe independent random variables $(X_i)_{i=1,\ldots,n}$ following a distribution from an exponential family with parameters $\theta_i = (Kf) (\tfrac in),$ with $K$ a linear (ill-posed) operator mapping univariate functions to univariate functions and $f$ is unknown. Using the same transformation as in the direct case, we obtain approximating model \eqref{eq.mod_link=h}, where we observe $(Y_t)_{t\in [0,1]}$ with
\begin{align*}
	dY_t = (h \circ Kf)(t) dt + n^{-\frac 12} dW_t, \quad t\in [0,1].
\end{align*}
The quality of approximation can even be better than in the direct case, since $Kf$ is typically smoother than $f.$ However, there are other obstacles in the formulation which make formal convergence proofs with respect to the Le Cam distance difficult. Nevertheless, it is believed that from a practical point of view the approximation is sufficiently accurate.

The above approach has been outlined for nonparametric exponential families with fixed uniform design but is in fact more general. For instance, it is well-known that nonparametric density estimation, where we observe $n$ i.i.d. copies of a random variable $X$ drawn from a Lebesgue density $f$, can be mapped to the model \eqref{eq.mod_link=sqrt} (cf. \cite{nussbaum1996}, \cite{brown2004}, \cite{RaySchmidt-Hieber2015b}), which is model \eqref{eq.mod_link=h} with link function $h(x) = 2\sqrt{x}.$ Following the above arguments, we may extend this to density deconvolution, which is one of the most studied statistical inverse problems. Here, we observe $n$ i.i.d. copies of $X+\epsilon,$ where $X$ and $\epsilon$ have Lebesgue densities $f$ and $g$ respectively with $g$ known, and we aim to reconstruct $f$ from the data. The density of $X+\epsilon$ is then the convolution $f\star g$ and we may thus rewrite deconvolution as a Gaussian shift model, where we observe $(Y_t)_{t\in [0,1]}$ with
\begin{align*}
	dY_t = 2\sqrt{(f\star g)(t)} dt + n^{-1/2} dW_t, \quad t\in [0,1].
\end{align*}

Another important example is Poisson intensity estimation. Suppose we observe a Poisson process on $[0,1]$ with intensity $nf,$ where $f:[0,1]\rightarrow (0,\infty).$ This can be replaced by model \eqref{eq.mod_link=sqrt}, so that Poisson intensity estimation gives the same link function as density estimation. We may also consider the related inverse problem where we observe a Poisson process on $[0,1]$ with intensity $nKf,$ where $K$ is a linear operator mapping into the space of univariate functions on $[0,1].$  The $2$-dimensional version of this problem has various applications in photonic imaging. In this case, $K$ is typically a convolution operator modelling the blurring of images by a so-called point spread function. In analogy with the density deconvolution case, we may rewrite this as observing  $(Y_t)_{t\in [0,1]}$ with
\begin{align*}
	dY_t = 2\sqrt{(Kf)(t)} dt + n^{-1/2} dW_t, \quad t\in [0,1].
\end{align*}

Another interesting example is binary regression, where we observe $n$ independent Bernoulli random variables (coin flips) with probabilities of success $f(\tfrac in ) = P(X_i=1)$, where $f:[0,1] \rightarrow [0,1]$ is an unknown regression function \cite{diaconis1993}. The Bernoulli distribution is an exponential family and we can use the approach outlined above to rewrite this in the form \eqref{eq.mod_link=arcsin} (cf. \cite{Grama1998}, Section 4). The link function in this example is $h(x) = 2\arcsin \sqrt{x}.$ In the inverse problem case, we may still use the same approximation. The probability of success of $X_i$ is then $(Kf)(\tfrac in)$, where $K$ is an operator that maps into the space of univariate functions taking values in $(0,1).$

A further example is variance estimation. Suppose that in the original problem, we observe $n$ independent normal random variables with variance $f(\tfrac in)^2,$ where $f\geq 0$ is an unknown regression function. This problem can be rephrased as a nonparametric estimation problem in terms of exponential families and leads to model \eqref{eq.mod_link=log} (cf. \cite{Grama1998}, Section 4). We may also extend this to an ill-posed inverse problem, where we observe $X_i \sim \mathcal{N}(0, (Kf)(\tfrac in)^2),$ $i=1,\ldots,n,$ and $K$ could for instance be a convolution operator. The accompanying shift model is then \eqref{eq.mod_link=log} with $f$ replaced by $Kf.$

Finally, let us consider spectral density estimation, where we observe a random vector of length $n$ coming from a stationary Gaussian distribution with spectral density $f:[-\pi, \pi]\rightarrow (0,\infty).$ In the corresponding Gaussian shift model, we observe $(Y_t)_{t\in [-\pi,\pi]}$ with
\begin{align*}
	dY_t = \frac{1}{2\sqrt{\pi}}\log (f(t)) dt + n^{-1/2} dW_t, \quad t\in [-\pi,\pi],
\end{align*}
(cf. \cite{golubev2010}). Notice that this model is of the same form as for variance estimation.

To summarise, various important statistical inverse problems can be approximated by model \eqref{eq.mod_link=h} and often even some theoretical guarantees exist that asymptotically the approximation does not lead to a loss of information. For most applications, the link function $h$ is non-linear. Bringing models into the form \eqref{eq.mod_link=h} allows one to develop a unified estimation theory for such statistical inverse problems.

\section*{Acknowledgements} We thank Aad van der Vaart and two anonymous referees for many interesting remarks and suggestions.

\appendix

\section*{Appendix}

The appendix is subdivided into Section \ref{sec.proofs}, which contains the proofs of the main theorems, and Section \ref{sec.technical_results}, where we collect some technical results. Recall that the estimator of $h\circ f$ is denoted for convenience by $\widehat{h}.$

\section{Proofs}
\label{sec.proofs}

\begin{proof}[Proof of Theorem \ref{thm.est_link=sqrt}]
Let $h(f) = \sum_{j,k} d_{j,k} \psi_{j,k}$ (here $h(f) = 2\sqrt{f}$) and for $\gamma > 0$ define
\begin{equation*}
\mathcal{J}_n (\gamma) = \big\{ (j,k) : |d_{j,k}| > \gamma \sqrt{\log n / n} \big\}  .
\end{equation*}
For $\tau > 2\sqrt{2}$ and every $0 < \gamma < \tau$ it follows that
\begin{align*}
\P \big( |\widehat{d}_{j,k}| \neq 0 \text{ for some } (j,k) \in \mathcal{J}_n^c (\gamma) \big) 
&\leq \frac{2n^{ 1-\frac{1}{2} (\tau - \gamma)^2}}{(\tau-\gamma)\sqrt{\log n}} , \\
\P \big( |\widehat{d}_{j,k}| = 0 \text{ for some } (j,k) \in \mathcal{J}_n (2\tau) \big) 
&\leq \frac{2n^{1-\frac{\tau^2}{2}} }{\tau \sqrt{\log n}} ,
\end{align*}
\begin{equation*}
\P \left( |Y_{j,k} - d_{j,k}| > \tau \sqrt{\log n / n} \text{ for some } (j,k), 0 \leq j \leq J_n, 0 \leq k \leq 2^j-1  \right) \leq \frac{2n^{1-\frac{\tau^2}{2}} }{\tau \sqrt{\log n}} .
\end{equation*}
The first and third inequalities follow from the one-dimensional Gaussian tail-bound $\P (|Z| \geq t) \leq 2(2\pi)^{-1/2} t^{-1} e^{-t^2/2}$ and a union bound over the $2^{J_n+1} \leq 2n$ possible indices, while the second event is contained in the third event. Consider the event
\begin{equation}\label{eq.wavelet_event}
\begin{split}
A_n & = \Big\{ |\widehat{d}_{j,k}| = 0,  \forall (j,k) \in \mathcal{J}_n^c (\tau/2) \Big\} \cap \Big\{ |\widehat{d}_{j,k}| \neq 0,  \forall (j,k) \in \mathcal{J}_n (2\tau)   \Big\}  \\
& \quad \quad \quad  \cap \Big\{ |Y_{j,k} - d_{j,k}| \leq \tau \sqrt{\log n/n},  \forall (j,k), 0 \leq j \leq J_n \Big\}  ,
\end{split}
\end{equation}
which by the three inequalities above satisfies $\P (A_n^c) \leq 6 \tau^{-1} (\log n)^{-1/2} n^{1-\frac{\tau^2}{8}}$.

We bound the loss at a given point $x_0 \in (0,1)$, noting that we can do this simultaneously for all such $x_0$ on the event $A_n$. Recalling that $h^{-1}(x) = x^2/4$ and so $\widehat{f} = h^{-1} (\widehat{h}) = \widehat{h}^2/4$,
\begin{align*}
	|\widehat{f}(x_0)- f(x_0)|& = | (\widehat{h}(x_0) - 2\sqrt{f(x_0)} + 2\sqrt{f(x_0)})^2/4- f(x_0) |  \\
	& \leq \sqrt{f(x_0)} \big| \sum_{j,k} (\widehat{d}_{j,k}-d_{j,k}) \psi_{j,k}(x_0) \big| 
	+ \frac{1}{4}\big| \sum_{j,k} (\widehat{d}_{j,k}-d_{j,k}) \psi_{j,k}(x_0) \big| ^2 = : (I) +(II) .
\end{align*}
We bound terms $(I)$ and $(II)$ on the event $A_n$. By the localization property of wavelets,
\begin{align}
	\big| \sum_{j,k} (\widehat{d}_{j,k}-d_{j,k}) \psi_{j,k}(x_0) \big| 
	\leq C(\psi) \sum_{j} 2^{j/2} \max_{k:\, \psi_{j,k}(x_0) \neq 0} |\widehat{d}_{j,k} - d_{j,k} |. 
	\label{eq.Besov_embed}
\end{align}
We require bounds on the wavelet coefficients $| \langle 2 \sqrt{f} ,\psi_{j,k} \rangle|$. Let $j(x_0)$ be as in Lemma \ref{lem.wav_decay_small}, setting $j(x_0) = \infty$ if $f(x_0)=0$. Lemma \ref{lem.wav_decay_small} yields that for any $\psi_{j,k}$ with $j\geq j(x_0)$ and $x_0\in \supp(\psi_{j,k})$,
\begin{equation*}
|d_{j,k}| = \Big| \int_0^1 2\sqrt{f(t) }\psi_{j,k}(t) dt \Big| \leq C(\psi,\beta)\frac{R}{\sqrt{f(x_0)}} 2^{-\frac{j}{2}(2\beta+1)}.
\end{equation*}
Conversely, for the low-frequency coefficients $j<j(x_0)$ with $\psi_{j,k}(x_0) \neq 0$, Lemma \ref{lem.wav_decay_small} yields $|d_{j,k}| \leq C(\psi,\beta) \sqrt{R} \, 2^{-\frac{j}{2}(\beta+1)}$.

Recall that on $A_n$, we have $\widehat{d}_{j,k} = 0$ for all $(j,k) \in \mathcal{J}_n^c (\tau/2)$. Denote by $J_1$ the smallest integer $j$ for which $C\sqrt{R}2^{-\frac j2 (\beta+1)}\leq \frac{\tau}{2} \sqrt{\log n/n}$ and similarly denote by $J_2$ the smallest $j$ for which $CR f(x_0)^{-1/2} 2^{-\frac j2 (2\beta+1)}\leq \frac{\tau}{2} \sqrt{\log n/n}.$ By simple calculations we find that $2^{J_1} \asymp (n/\log n)^{1/(\beta+1)}$ and $2^{J_2} \asymp (n/\log n)^{1/(2\beta+1)} f(x_0)^{-1/(2\beta+1)}.$ Notice that $2^{J_1} \lesssim 2^{J_2}$ if and only if $f(x_0) \lesssim (\log n/ n)^{\beta / (\beta+1)}$, that is, the transition in the rate $\tilde{r}_{n,\beta}^P(f(x))$.

We now bound the sum \eqref{eq.Besov_embed} restricted to all $(j,k) \in \mathcal{J}_n^c (\tau/2)$. If $J_1\leq J_2$ (``small" $f(x_0)$), then on $A_n$,
\begin{align*}
	\sum_{(j,k) \in \mathcal{J}_n^c (\tau/2)} 2^{j/2} |d_{j,k}| \leq C(\beta,R,\tau) \Big[ \sum_{j\leq J_1} 2^{j/2}  \sqrt{\frac{\log n}{n}} + \sum_{J_1<j} 2^{j/2} 2^{-\frac j2(\beta+1)} \Big]
	\leq C \left( \frac{\log n}{n}\right)^{\frac{\beta}{2\beta+2}}.
\end{align*}
If $J_2 \leq J_1$ (``large" $f(x_0)$) we have similarly
\begin{align*}
	\sum_{(j,k) \in \mathcal{J}_n^c (\tau/2)} 2^{j/2} |d_{j,k}| & \leq C(\beta,R,\tau) \Big[ \sum_{j\leq J_2} 2^{j/2}  \sqrt{\frac{\log n}{n}} +  \sum_{J_2<j}  2^{j/2} f(x_0)^{-1/2} 2^{-\frac j2(2\beta+1)} \Big] \\
	& \lesssim 2^{J_2/2}  \sqrt{\frac{\log n}{n}} + f(x_0)^{-1/2} 2^{-J_2\beta}
	\lesssim f(x_0)^{-1/(4\beta+2)} \left(\frac{\log n}{n}\right)^{\frac{\beta}{2\beta+1}}.
\end{align*}
For the remaining coefficients $(j,k) \in \mathcal{J}_n (\tau/2)$, note that on $A_n$ we have $|\widehat{d}_{j,k} - d_{j,k}| \leq 2\tau \sqrt{\log n/n}$. Consequently, the sum \eqref{eq.Besov_embed} restricted to these indices is bounded by $C (2^{J_1/2} \wedge 2^{J_2/2}) \sqrt{\log n/n}$, which is bounded from above by the minimum of the previous two displays.

Together, the bounds from the bias and stochastic error show that \eqref{eq.Besov_embed} can be bounded from above by a multiple of $\min\{(\log n/n)^{\beta/(2\beta+2)} , f(x_0)^{-1/(4\beta+2)} (\log n/n)^{\beta/(2\beta+1)}\}.$ From this we finally deduce the upper bounds for $(I)$ and $(II)$,
\begin{align*}
		(I) \lesssim \min \Big ( \sqrt{f(x_0)} (\log n/n)^{\beta/(2\beta+2)} , f(x_0)^{\beta/(2\beta+1)} (\log n/n)^{\beta/(2\beta+1)} \Big),
\end{align*}
\begin{align*}
	(II) \lesssim \min \Big ( (\log n/n)^{\beta/(\beta+1)} , f(x_0)^{-1/(2\beta+1)} (\log n/n)^{2\beta/(2\beta+1)} \Big).
\end{align*}
In both the bounds for $(I)$ and $(II),$ the minimum is attained by the first value if and only if $f(x_0)\leq (\log n/n)^{\beta/(\beta+1)}.$ Comparing the various terms gives $$(I)+(II)\lesssim (\log n/n)^{\beta/(\beta+1)} + f(x_0)^{\beta/(2\beta+1)} (\log n/n)^{\beta/(2\beta+1)} \lesssim \tilde{r}_{n,\beta}^P (f(x_0)).$$
\end{proof}

\begin{proof}[Proof of Theorem \ref{thm.lower_bound_sqrt}]
We distinguish between the two cases
\begin{align*}
	(A): \quad f_n^*(x_0) > n^{-\frac{\beta}{\beta+1}} , \quad \quad \quad \quad (B): \quad f_n^*(x_0) \leq n^{-\frac{\beta}{\beta+1}} ,
\end{align*}
which correspond to the two regimes in $r_{n,\beta}^P(f(x_0))$. In both cases we derive the lower bound using a two hypothesis testing argument as described in Section 2 of \cite{tsybakov2009}. Notice that the original lower bound reduction scheme does not directly permit rates that depend on the parameter itself. However, this can easily be fixed by assuming that for the two hypotheses, the corresponding rates are of the same order. We need to verify that
\begin{itemize}
\item[$(i)$] there exist two sequences of hypotheses $(f_{0,n})_n$ and $(f_{1,n})_n$ in the local parameter space $\{ f : \|f\|_{\mH^\beta}\leq R, \KL(f,f_n^*)\leq 1\}$,
\item[$(ii)$] $|f_{0,n}(x_0)-f_{1,n}(x_0)| \gtrsim r_{n,\beta}^P (f_{0,n}(x_0))\asymp r_{n,\beta}^P (f_{1,n}(x_0)),$
\item[$(iii)$] the Kullback-Leibler distance between the hypotheses is $O(1)$ (which in the present case follows from $(i)$).
\end{itemize}
If $(i)$-$(iii)$ are satisfied then applying Theorem 2.2(iii) of \cite{tsybakov2009} completes the proof.

$(A)$: Consider the hypotheses
\begin{align}
f_{0,n}(x) := f_n^*(x) , \quad  \ f_{1,n}(x) := f_n^*(x)+ R h_n^\beta K\big(\tfrac{x-x_0}{h_n} \big) , \ \ h_n = c_0 \left(\tfrac{f_n^*(x_0)}{n}\right)^{1/(2\beta+1)},
\label{eq.two_hyp_large}
\end{align}
where $c_0>0$ and $K = \eta K_0$ for $K_0$ the function in Lemma \ref{lem.mollifier} with $\eta>0$. Recall that $K_0$ is symmetric, non-negative, infinitely-differentiable and supported on $[-1,1]$. For $(i)$, it holds by assumption that $\limsup_{n}\|f_n^*\|_{\mH^\beta} < R$. Taking the $\lfloor \beta \rfloor$-th derivative yields $f_{1,n}^{(\lfloor \beta \rfloor )}(x) = f_{0,n}^{(\lfloor \beta \rfloor )}(x)+ R \eta h_n^{\beta - \lfloor \beta \rfloor } K_0^{(\lfloor \beta \rfloor )} ( (x-x_0)/h_n)$ so that
\begin{align*}
\big\| R h_n^\beta K\big(\tfrac{x-x_0}{h_n} \big) \big\|_{C^\beta} \leq R\eta(h_n^\beta \|K_0\|_\infty+ h_n^{\beta - \lfloor \beta \rfloor} \|K_0^{(\lfloor \beta \rfloor)} \|_\infty + h_n^{\beta - \lfloor \beta \rfloor} C(K_0) ),
\end{align*}
where $C(K_0) = \sup_{x\neq y}|K_0^{(\lfloor\beta\rfloor)}(x) - K_0^{(\lfloor\beta\rfloor)}(y)|/|x-y|^{\beta-\lfloor\beta\rfloor}<\infty$. Taking $\eta>0$ small enough, the above is bounded by $(R-\limsup_n \|f_n^*\|_{\mH^\beta})/2>0$. For $j = 1,...,\lfloor\beta\rfloor$, using Lemma \ref{lem.mollifier}
\begin{align*}
\Big| \big( R h_n^\beta K\big(\tfrac{x-x_0}{h_n} \big) \big)^{(j)} \Big| \leq C(\beta,j,K_0) (R \eta)^\frac{j}{\beta} \Big| Rh_n^\beta K \big(\tfrac{x-x_0}{h_n} \big) \Big|^\frac{\beta-j}{\beta},
\end{align*}
so that again taking $\eta>0$ small enough, we have $|R h_n^\beta K((\cdot-x_0)/h_n)|_{\mH^\beta} \leq (R-\limsup_n \|f_n^*\|_{\mH^\beta})/2$. Using the definition of $\|\cdot\|_{\mH^\beta}$ and that it defines a norm by Theorem 2.1 of \cite{RaySchmidt-Hieber2015c},
\begin{align*}
\|f_{1,n}\|_{\mH^\beta} \leq \|f_{0,n}\|_{\mH^\beta} + \big\| R h_n^\beta K\big(\tfrac{x-x_0}{h_n} \big) \big\|_{C^\beta} + \big| R h_n^\beta K\big(\tfrac{x-x_0}{h_n} \big) \big|_{\mH^\beta}  \leq R.
\end{align*}
This verifies that $\|f_{0,n}\|_{\mH^\beta}, \|f_{1,n}\|_{\mH^\beta}\leq R$.

Next, the pointwise distance satisfies $|f_{0,n}(x_0)- f_{1,n}(x_0)| = R c_0^\beta K(0) (f_n^*(x_0) /n)^{\frac{\beta}{2\beta+1}} \gtrsim r_{n,\beta}^P(f_{0,n}(x_0))$ since $f_n^*(x_0) > n^{-\frac{\beta}{\beta+1}}$ by assumption. Since $h_n^\beta = c_0^\beta (f_n^*(x_0)/n)^{\beta/(2\beta+1)} \lesssim f_n^*(x_0),$ we have that $r_{n,\beta}^P(f_{0,n}(x_0)) \asymp  r_{n,\beta}^P (f_{1,n}(x_0))$, thereby establishing $(ii)$.

Finally, we bound the Kullback-Leibler divergence between these hypotheses. Applying Lemma \ref{lem.local_func_size},
\begin{align*}
	KL (P_{f_{0,n}}, P_{f_{1,n}}) 
	& = \frac{n}{2} \int_0^1 \big( 2\sqrt{f_{0,n}(x)} - 2\sqrt{f_{1,n}(x)} \big)^2 dx \\
	& = 2nh_n \int_{-1}^1 \left( \sqrt{f_n^*(x_0 + h_nu) } - \sqrt{f_n^*(x_0 + h_n u) + Rh_n^\beta K(u))}\right)^2 du \\
	& \leq \frac{4nh_n}{f_n^*(x_0)} \int_{-1}^1 \left( f_n^*(x_0 + h_nu) - f_n^*(x_0 + h_n u) - Rh_n^\beta K(u) )\right)^2 du \\
	& = 4R^2 c_0^{2\beta+1} \| K \|_2^2 \leq 1
\end{align*}
for $c_0 \leq aR^{-1/\beta} \wedge (4R^2\|K\|_2^2)^{-1/(2\beta+1)}$ and $a$ as in Lemma \ref{lem.local_func_size}. This verifies $(iii)$ and, when combined with the above, $(i)$.

$(B)$: Consider the hypotheses
\begin{align}
f_{0,n}(x) = f_n^*(x), \quad \quad \quad  \ f_{1,n}(x) = f_n^*(x) + R h_n^\beta K\big(\tfrac{x-x_0}{h_n} \big) , \ \ h_n = c_0 n^{-1/(\beta+1)},
\label{eq.two_hyp_small}
\end{align}
where $c_0 >0$ and $K$ is as in $(A)$. Note that the bandwidth $h_n$ is different in this case. Exactly as in $(A)$, it can be shown that $f_{0,n}$ and $f_{1,n}$ are in the parameter space, with the only difference being the bound on the Kullback-Leibler divergence. Using that the square-root function is $1/2$-H\"older continuous with H\"older constant 1,
\begin{align*}
	KL (P_{f_{0,n}}, P_{f_{1,n}}) & = 2n \int \big( \sqrt{f_{0,n}(x)} - \sqrt{f_{1,n}(x)} \big)^2 dx \\
	& \leq 2n \int |f_{0,n}(x) - f_{1,n}(x)| dx = 2nRh_n^{\beta+1} \| K \|_1 = 2Rc_0^{\beta+1} \|K \|_1 \leq 1,
\end{align*}
for $c_0 \leq (2R\|K\|_1)^{-1/(\beta+1)}$. This verifies $(i)$ and $(iii)$. Finally for $(ii)$ we have $|f_{0,n}(x_0) - f_{1,n}(x_0)| = R c_0^{\beta} K(0)n^{-\beta/(\beta+1)},$ which completes the proof.
\end{proof}

\begin{proof}[Proof of Theorem \ref{thm.Holder_counterexample}]
Consider the two hypotheses
\begin{align*}
	f_{0,n}(x) = x, \quad \quad \quad f_{1,n}(x)= x+r_n, \ \ r_n = c_0 /\sqrt{n\log n} ,
\end{align*}
where $c_0 >0$. We verify $(i)$-$(iii)$ as in the proof of Theorem \ref{thm.lower_bound_sqrt}. Both $(i)$ and $(ii)$ hold since the functions $f_{0,n}$ and $f_{1,n}$ are in the class $\mathcal{G},$ upper bounded at zero by $n^{-1/2}$ and satisfy $|f_{0,n}(0)- f_{1,n} (0)| = r_n.$ For the Kullback-Leibler divergence,
\begin{align*}
KL (P_{f_{0,n}}, P_{f_{1,n}}) 
	&= \frac{n}{2} \int_0^1 \big(2\sqrt{x+r_n}-2\sqrt{x}\big)^2 dx \\
	&\leq  4n \int_0^{r_n} (\sqrt{x+r_n})^2 +(\sqrt{x})^2 dx + 2n \int_{r_n}^1 \frac{(x+r_n-x)^2}{(\sqrt{x+r_n}+ \sqrt{x})^2} dx \\
	&\leq 8n r_n^2 + 2nr_n^2 \int _{r_n}^1 \frac 1x dx \\
	&\leq  10nr_n^2 \log(1/r_n),
\end{align*}
which is smaller than $1$ for $c_0$ small enough, thereby verifying $(iii)$.
\end{proof}

\begin{proof}[Proof of Theorem \ref{thm.est_link=arcsin}]
We follow a similar approach as in Theorem \ref{thm.est_link=sqrt}. Again let $h(f) = \sum_{j,k} d_{j,k} \psi_{j,k}$ (here $h(f) = 2\arcsin \sqrt{f}$) and let $A_n$ be the event in \eqref{eq.wavelet_event}. For fixed $x_0 \in (0,1)$ we again bound \eqref{eq.Besov_embed}, but with $(d_{j,k})$, $(\widehat{d}_{j,k})$ corresponding to model \eqref{eq.mod_link=arcsin}. We require the corresponding bounds on the wavelet coefficients $|\langle 2\arcsin \sqrt{f}, \psi_{j,k} \rangle|$. Let $j(x_0)$ be as in Lemma \ref{lem.wav_decay_small_arcsin}, setting $j(x_0) = \infty$ if $f(x_0)\in \{0,1\}$. Lemma \ref{lem.wav_decay_small_arcsin} yields that for any $\psi_{j,k}$ with $j\geq j(x_0)$ and $x_0\in \supp(\psi_{j,k})$,
\begin{equation*}
|d_{j,k}| = \left| \int_0^1 2\arcsin \sqrt{f(t) }\psi_{j,k}(t) dt \right| \leq C(\psi,\beta)\frac{R}{\sqrt{f(x_0)(1-f(x_0))}} 2^{-\frac{j}{2}(2\beta+1)}.
\end{equation*}
Conversely, for the low-frequency coefficients $j<j(x_0)$ with $\psi_{j,k}(x_0) \neq 0$, Lemma \ref{lem.wav_decay_small_arcsin} yields $|d_{j,k}| \leq C(\psi,\beta) \sqrt{R} \, 2^{-\frac{j}{2}(\beta+1)}$.

Note that we have exactly the same wavelet bounds as in the Poisson case in Theorem \ref{thm.est_link=sqrt}, except with $f(x_0)$ replaced by $f(x_0)(1-f(x_0))$. Therefore arguing as in Theorem \ref{thm.est_link=sqrt}, we can bound \eqref{eq.Besov_embed} as
\begin{align*}
\left| \widehat{h}(x_0) - h(f(x_0)) \right| \lesssim \min \left(  (\log n/n)^{\beta/(2\beta+2)} , \left[ f(x_0)(1-f(x_0))\right]^{-1/(4\beta+2)} (\log n/n)^{\beta/(2\beta+1)} \right) .
\end{align*}
Recall that $\widehat{f}(x_0) = \sin^2 [\tfrac{1}{2}\widehat{h}(x_0)]$. Since $|\widehat{h}(x_0) - h(f(x_0))| \rightarrow 0$, Taylor expansion yields
\begin{align*}
\left| \widehat{f}(x_0) - f(x_0) \right| & = \left| \sin^2 \tfrac 12 \widehat{h}(x_0) - \sin^2 \tfrac 12 h(f(x_0)) \right| \\
& = \left|\tfrac 12  (\widehat{h}(x_0) - h(f(x_0))) \sin h(f(x_0)) + \tfrac 12 (\widehat{h}(x_0) - h(f(x_0)))^2  \cos h(f(x_0)) \right| \\
& \quad \quad \quad \quad + O (|\widehat{h}(x_0) - h(f(x_0))|^3)  \\
& \lesssim |\widehat{h}(x_0) - h(f(x_0))| \max \left( \sin \tfrac 12 h(f(x_0)) \cos \tfrac 12 h(f(x_0)), |\widehat{h}(x_0) - h(f(x_0))|  \right) \\
& =  \max \left( \sqrt{f(x_0)(1-f(x_0))} |\widehat{h}(x_0) - h(f(x_0))| , |\widehat{h}(x_0) - h(f(x_0))|^2 \right) .
\end{align*}
Substituting in the bounds derived above yields
\begin{align*}
\left| \widehat{f}(x_0) - f(x_0) \right| \lesssim \max \Big( \Big( \frac{\log n}{n} \Big)^{\frac{\beta}{\beta+1}} ,  \Big( \frac{f(x_0)(1-f(x_0)) \log n}{n} \Big)^{\frac{\beta}{2\beta+1}}  \Big) .
\end{align*}
\end{proof}

\begin{proof}[Proof of Theorem \ref{thm.lower_bound_arcsin}]
The lower bound is proved in the same way as for the Poisson case in Theorem \ref{thm.lower_bound_sqrt} by considering a two hypothesis argument and verifying $(i)$-$(iii)$ for the parameter space $\{f: \|f\|_{\mH_B^\beta} \leq R, \KL(f,f_n^*)\leq 1\}$ and the rate $r_{n,\beta}^B.$ Consider the two cases:
\begin{align*}
	(A): \quad \min(f_n^*(x_0), 1-f_n^*(x_0)) > n^{-\frac{\beta}{\beta+1}} ,  \quad \quad \quad (B): \quad \min(f_n^*(x_0), 1-f_n^*(x_0)) \leq n^{-\frac{\beta}{\beta+1}} .
\end{align*}

$(A)$: Suppose firstly that $f_n^*(x_0) \leq 1/2$ and consider the hypotheses \eqref{eq.two_hyp_large}. It is shown in Theorem \ref{thm.lower_bound_sqrt} that these functions have $\|\cdot\|_{\mathcal{H}^\beta}$-norm at most $R$ and and this extends to $\|\cdot\|_{\mH_B^\beta}$ with only minor modifications. For the Kullback-Leibler divergence, we have using $(\arcsin x)' = (1-x^2)^{-1/2},$ $h_n^\beta \| K \|_\infty \rightarrow 0$, and Lemma \ref{lem.local_func_size_arcsin}, that for $n$ large enough, 
\begin{align*}
	KL (P_{f_{0,n}}, P_{f_{1,n}}) & = 	2n \int_0^1 \left( \arcsin \sqrt{f_n^*(x)} - \arcsin \sqrt{f_n^*(x) + R h_n^\beta K((x-x_0)/h_n)} \right)^2 dx \\
	& =2 n h_n \int_{-1}^1  \left( \arcsin \sqrt{f_n^*(x_0+h_nu)} - \arcsin \sqrt{f_n^*(x_0+h_nu) + R h_n^\beta K(u)} \right)^2 du \\
	& \leq 2n h_n \int_{-1}^1 \frac{\big( \sqrt{f_n^*(x_0+h_nu)} - \sqrt{f_n^*(x_0+h_nu) + R h_n^\beta K(u)} \big)^2}{1 - \max_{t\in[-1,1]} f_n^*(x_0+h_nt) - R h_n^\beta \|K\|_\infty }  du\\
	& \leq \frac{4nR^2 h_n^{2\beta+1} \| K \|_2^2 }{f_n^*(x_0) (1-f_n^*(x_0))} \leq 8R^2 c_0^{2\beta+1} \| K \|_2^2 \leq 1 
\end{align*}
for $c_0 \leq aR^{-1/\beta} \wedge (8R^2 \| K \|_2^2)^{-1/(2\beta+1)}$ and $a$ as in Lemma \ref{lem.local_func_size_arcsin}. This verifies $(i)$ and $(iii)$. For the pointwise distance, $|f_{0,n}(x_0)- f_{1,n}(x_0)| = R c_0^\beta K(0) (f_n^*(x_0) /n)^{\frac{\beta}{2\beta+1}} \gtrsim r_{n,\beta}^B(f_{0,n}(x_0)) \asymp r_{n,\beta}^B(f_{1,n}(x_0))$, thereby establishing $(ii)$. If $f_n^*(x_0) > 1/2$, then the same argument holds for the modified hypothesis $f_{1,n}^-(x) = f_n^*(x) - R h_n^\beta K((x-x_0)/h_n)$ with $h_n = c_0 ((1-f_n^*(x_0))/n)^{1/(2\beta+1)}$.

$(B)$: Suppose $f(x_0)\leq 1/2$ and consider hypotheses \eqref{eq.two_hyp_small}. It only remains to show the required bound on the Kullback-Leibler divergence. Using the usual change of variable, that $\arcsin$ is continuously differentiable near 0 and that $\sqrt{\cdot}$ is $1/2$-H\"older continuous,
\begin{align*}
KL (P_{f_{0,n}}, P_{f_{1,n}}) & = 2n h_n \int_{-1}^1 \left(\arcsin \sqrt{f_n^*(x_0+h_nu)} - \arcsin \sqrt{f_n^*(x_0+h_nu) + R h_n^\beta K(u)} \right)^2 du \\
& \leq 2n h_n \int_{-1}^1 \frac{\big| \sqrt{f_n^*(x_0+h_nu)} - \sqrt{f_n^*(x_0+h_nu) + R h_n^\beta K(u)} \big|^2}{1-\max_{t\in[-1,1]} f_n^*(x_0+h_nt) -R h_n^\beta \|K\|_\infty} du \\
& \leq \frac{4nRh_n^{\beta+1}\|K\|_1}{1-f_n^*(x_0)} \leq 8Rc_0^{\beta+1}\|K\|_1 \leq 1
\end{align*}
for $c_0 \leq (8R\|K\|_1)^{-1/(\beta+1)}$. If $f(x_0) > 1/2$, then the proof follows similarly for hypothesis $f_{1,n}^-(x) = f_n^*(x) - R h_n^\beta K((x-x_0)/h_n)$ with $h_n = c_n n^{-1/(\beta+1)}$, noting that $\sqrt{\cdot}$ and $\arcsin$ are respectively continuously differentiable and $1/2$-H\"older near 1. This completes the proof.
\end{proof}

\begin{proof}[Proof of Theorem \ref{thm.est_link=log}]
By Lemma 1.1.1 of \cite{csorgo1981}, for any $\eta >0$ there exists $C(\eta)>0$ such that for all $u>0$,
\begin{align*}
\P \Big( \sup_{\frac{1}{2n}\leq x\leq 1-\frac{1}{2n}} \sqrt{n}\big|W_{x+\frac{1}{2n}} - W_{x-\frac{1}{2n}}\big| \geq u\sqrt{2\log n}  \Big) \leq C(\eta) n^{1-\frac{2u^2}{2+\eta}} .
\end{align*}
Letting $\eta = \sigma -1>0$ and $u=(\sigma+1)/2 > 1$,
\begin{align*}
\P \Big( \sup_{\frac{1}{2n}\leq x\leq 1-\frac{1}{2n}} |\epsilon_n(x)| \geq \tfrac 12(\sigma+1) \sqrt{2\log n}  \Big) \leq C(\sigma) n^{-\tfrac{\sigma-1}{2}} .
\end{align*}
For the boundary values $x\in[0,1/(2n)]$ or $x\in[1-1/(2n),1]$, noting that $\text{var}(\epsilon_n(x)) \leq \text{var}(\sqrt{n}(W_{x+1/(2n)}-W_0))$, one can again apply Lemma 1.1.1 of \cite{csorgo1981} to obtain the same bound. We may thus restrict to the event $B_n = \{ \sup_{x\in[0,1]} |\epsilon_n(x)| \leq \tfrac{1}{2}(\sigma+1) \sqrt{2\log n} \}$, which has probability at least $1-C(\sigma) n^{-\tfrac{\sigma-1}{2}}$. We study the behaviour of $Z_n$ on $B_n$.

Consider $x_0 \in [1/(2n),1-1/(2n)]$ and note that the following argument can be easily modified to apply to $x_0 \in [0,1/(2n)] \cup [1-1/(2n),1]$. Suppose $f(x_0) \geq M_n n^{-\beta}$, where $M_n = C e^{(3\sigma+1)\sqrt{\log n}} \rightarrow \infty$ and $C>0$. It follows that $1/(2n) \leq a (f(x_0)/R)^{1/\beta}$ and so applying Lemma \ref{lem.local_func_size}, $|\log f(t) - \log f(x_0)| = \log (1 + (f(t)-f(x_0))/f(x_0)) = O(1)$ for any $|x_0-t| \leq 1/(2n)$. Consequently on $B_n$,
\begin{align*}
Z_n(x_0) & \geq 2^{-1/2}\log f(x_0) - \frac{n}{\sqrt{2}}\int_{x_0-\frac{1}{2n}}^{x_0+\frac{1}{2n}} \left| \log f(x_0) - \log f(t) \right| dt - \frac{\sigma+1}{2} \sqrt{2\log n}  \\
& \geq -\frac{\beta \log n}{\sqrt{2}} + 2^{-1/2}\log M_n - O(1) - \frac{\sigma+1}{2} \sqrt{2\log n}  \geq -\frac{\beta \log n}{\sqrt{2}} + \sigma\sqrt{2\log n} ,
\end{align*}
for $C>0$ large enough. Suppose now $f(x_0) \leq c e^{(\sigma-1)\sqrt{\log n}} n^{-\beta}$, where $0 < c < 1/2$, and that $Z_n(x_0) \geq - 2^{-1/2}\beta \log n + \sigma \sqrt{2\log n}$. Then
\begin{align*}
n \int_{x_0-\frac{1}{2n}}^{x_0+\frac{1}{2n}} \log f(t) dt = \sqrt{2}\big(Z_n(x_0) - \epsilon_n (x_0)\big) \geq -\beta \log n + (\sigma-1) \sqrt{\log n} ,
\end{align*}
and consequently there exists $t^*$ with $|x_0 - t^*| \leq 1/(2n)$ and $f(t^*) \geq e^{(\sigma-1)\sqrt{\log n}} n^{-\beta}$. Noting that $|x_0 - t^*| \leq a (f(t^*)/R)^{1/\beta}$, we can apply Lemma \ref{lem.local_func_size} to yield $f(x_0) \geq f(t^*) - |f(t^*) - f(x_0)| \geq \frac{1}{2}f(t^*)$, which is a contradiction. Hence for such $f(x_0)$, it holds that $Z_n(x_0) \leq -2^{-1/2}\beta \log n + \sigma \sqrt{2\log n}$. In conclusion, we have shown that there exist $C,c>0$ such that
\begin{equation}
\begin{split}
B_n \subset & \{ Z_n(x) \geq -\frac{\beta \log n}{\sqrt{2}} + \sigma \sqrt{2\log n} \text{ for all $x$ with } f(x) \geq C e^{(3\sigma+1)\sqrt{\log n}} n^{-\beta} \} \\
& \quad  \cap \{ Z_n(x) < -\frac{\beta \log n}{\sqrt{2}} + \sigma \sqrt{2\log n} \text{ for all $x$ with } f(x) \leq c e^{(\sigma-1)\sqrt{\log n}} n^{-\beta} \}.
\label{eq.threshold_event_log}
\end{split}
\end{equation}

We now analyze the estimator $\widehat{h}$ as in Theorem \ref{thm.est_link=sqrt}, working on the event $A_n \cap B_n$, where $A_n$ is given in \eqref{eq.wavelet_event}. Fix $x_0 \in (0,1)$ and again let $h(f) = \sum_{j,k} d_{j,k} \psi_{j,k}$ (here $h(f) = 2^{-1/2}\log f$). By \eqref{eq.threshold_event_log}, we need only consider $f(x_0) > ce^{(\sigma-1)\sqrt{\log n}} n^{-\beta}$, otherwise the estimator $\widehat{f}(x_0)$ equals 0 and the loss is $f(x_0)$. We again bound \eqref{eq.Besov_embed}, but with $(d_{j,k})$, $(\widehat{d}_{j,k})$ corresponding to model \eqref{eq.mod_link=log}. We establish the decay of the wavelet coefficients $|\langle 2^{-1/2} \log f, \psi_{j,k} \rangle|$. Let $j(x_0)$ be as in Lemma \ref{lem.wav_decay_log} (recall $f(x_0) > 0$). Lemma \ref{lem.wav_decay_log} yields that for any $\psi_{j,k}$ with $j\geq j(x_0)$ and $x_0\in \supp(\psi_{j,k})$,
\begin{equation}
|d_{j,k}| = \left| \int_0^1 2^{-1/2} \log f(t) \psi_{j,k}(t) dt \right| \leq C(\psi,\beta)\frac{R}{f(x_0)} 2^{-\frac{j}{2}(2\beta+1)}.
\label{eq.wavelet_bound_log}
\end{equation}

Denote by $J(x_0)$ the smallest integer $j$ such that $CRf(x_0)^{-1}2^{-\frac j2 (2\beta+1)} \leq \frac{\tau}{2}\sqrt{\log n/n}$; it follows that $2^{J(x_0)} \asymp f(x_0)^{-\frac{1}{\beta+1/2}} (n/\log n)^{\frac{1}{2\beta+1}}$. Bounding \eqref{eq.Besov_embed} as in Theorem \ref{thm.est_link=sqrt} but using the wavelet estimate \eqref{eq.wavelet_bound_log} instead (which we may do since $2^{J(x_0)} \gg 2^{j(x_0)}$ for $f(x_0) > ce^{(\sigma-1)\sqrt{\log n}} n^{-\beta}$),
\begin{align*}
\left| \widehat{h}(x_0) - h(f(x_0)) \right| \lesssim 2^{J(x_0)/2} \sqrt{\log n/n} \lesssim f(x_0)^{-\frac{1}{2\beta+1}} (\log n/n)^{\frac{\beta}{2\beta+1}}.
\end{align*}
Since $f(x_0) \geq ce^{(\sigma-1)\sqrt{\log n}} n^{-\beta}$, we have $| \widehat{h}(x_0) - h(f(x_0))| = o(1)$ so that by the exponential expansion
\begin{align*}
\left| e^{\sqrt{2}\widehat{h}(x_0)} - e^{\sqrt{2}h(f(x_0))} \right| = f(x_0) \left| e^{\sqrt{2}\widehat{h}(x_0) - \sqrt{2}h(f(x_0))} - 1 \right| \lesssim f(x_0)^\frac{2\beta}{2\beta+1} (\log n/n)^{\frac{\beta}{2\beta+1}} .
\end{align*}
Finally, for $c e^{(\sigma-1)\sqrt{\log n}} n^{-\beta} \leq f(x_0) \leq M_n n^{-\beta}$ (the set for which we may or may not threshold on $B_n$), the above rate is of smaller order than $f(x_0)$, the error from using the 0 estimator.
\end{proof}

\begin{proof}[Proof of Theorem \ref{thm.lower_bound_log}]
We again verify $(i)$-$(iii)$ as in Theorem \ref{thm.lower_bound_sqrt}. Consider the two cases
\begin{align*}
	(A): \quad f_n^*(x_0) > n^{-\beta} , \quad \quad \quad \quad (B): \quad f_n^*(x_0) \leq n^{-\beta} .
\end{align*}
$(A)$: Consider the hypotheses \eqref{eq.two_hyp_large} but with $h_n = c_0(f_n^*(x_0)^2/n)^{1/(2\beta+1)}$. It is shown in Theorem \ref{thm.lower_bound_sqrt} that these functions have $\|\cdot\|_{\mH^\beta}$-norm at most $R$, so we need only verify the Kullback-Leibler bound. If $c_0>0$ is taken sufficiently small, one can apply Lemma \ref{lem.local_func_size} to obtain $f_n^*(x) > f_n^*(x_0)/2$ for all $x\in[x_0-h_n,x_0+h_n]$. Consequently, using that $h_n \rightarrow 0$ and $\log (1+y)\leq y,$
\begin{align*}
	KL (P_{f_{0,n}}, P_{f_{1,n}})
	& = \frac{n}{4} \int_0^1 \left( \log f_{0,n}(x)-\log f_{1,n}(x)\right)^2 dx \\
	&= 	\frac{nh_n}{4} \int_{-1}^1 \log^2 \left( 1 + \frac{Rh_n^\beta K(u) }{f_n^*(x_0+h_nu)} \right) du \\
	& \leq \frac{nh_n}{4} \int_{-1}^1  \left( \frac{Rh_n^\beta K(u)}{f_n^*(x_0+h_nu)} \right)^2 du \\
	& \leq \frac{R^2 n h_n^{2\beta+1}\|K\|_2^2}{f_n^*(x_0)^2}
	= R^2 c_0^{2\beta+1}\|K\|_2^2 \leq 1
\end{align*}
for $c_0 \leq aR^{-1/\beta}\wedge (R^2 \|K\|_2^2)^{-1/(2\beta+1)}$ and $a$ as in Lemma \ref{lem.local_func_size}, which verifies $(i)$ and $(iii)$. For $(ii)$, it holds that $|f_{0,n}(x_0)- f_{1,n}(x_0)| = R c_0^\beta K(0) (f_n^*(x_0)^2 /n)^{\frac{\beta}{2\beta+1}} \gtrsim r_{n,\beta}^G(f_{0,n}(x_0)) \asymp r_{n,\beta}^G(f_{1,n}(x_0)).$ This completes the proof for $(A).$

$(B)$: Consider hypotheses \eqref{eq.two_hyp_small}, but with $h_n = c_0 f_n^*(x_0)^{1/\beta}$. It only remains to show the required bound on the Kullback-Leibler divergence. Using Lemma \ref{lem.local_func_size} as in case $(A)$ and with the usual change of variable,
\begin{align*}
	\KL(P_{f_{0,n}}, P_{f_{1,n}}) 
	&= \frac{nh_n}{4} \int_{-1}^1  \log^2 \left( 1 + \frac{Rh_n^\beta K(u)}{f_n^*(x_0 + h_nu)} \right) du  \\
	& \leq \frac{nh_n}{4} \int _{-1}^1 \log^2 \left(1 + \frac{R(c_0 f_n^*(x_0)^{1/\beta})^\beta K(u)}{f_n^*(x_0)/2} \right) du \\
	&\leq \frac{nh_n}{4} \int_{-1}^1 \log^2 \left( 1 + 2Rc_0^\beta K(u) \right) du .
\end{align*}
The last integral is finite and can be made arbitrarily small by taking $c_0>0$ small enough. For such a $c_0$, the right-hand side can thus be made smaller than $nf_n^*(x_0)^{1/\beta} \leq 1$ as required. This completes the proof.
\end{proof}

\begin{proof}[Proof of Theorem \ref{thm.plug_in_bd}]
Throughout the proof we write $L_n =\tfrac{\log n}n$, $g:=Kf,$ $\widehat{g}:=\widehat{Kf}=Y^\delta$ and let $C=C(\beta,R)$ denote a generic constant, which may change from line to line. 

First consider the link function $h=2\sqrt{\cdot}.$ In this case the local rate is $\widetilde r_{n,\beta}^P(g(t)),$ with $\widetilde r_{n,\beta}^P(u) = L_n^{\frac{\beta}{\beta+1}}\vee (uL_n)^{\frac{\beta}{2\beta+1}}.$ Notice that $u\mapsto\widetilde r_{n,\beta}^P(u)$ is monotone increasing, that $u\leq \widetilde r_{n,\beta}^P(u)$ if $u\leq L_n^{\frac{\beta}{\beta+1}}$ and that $\widetilde r_{n,\beta}^P(u)\leq u$ if $u\geq L_n^{\frac{\beta}{\beta+1}}.$  The last inequality can be extended so that for any $Q>0,$ 
\begin{align}
	\widetilde r_{n,\beta}^P(u) \leq \frac{u}{Q} \vee Q^{\frac{\beta}{\beta+1}} L_n^{\frac{\beta}{\beta+1}},
	\label{eq.techn_res_tilde_rn_proof}
\end{align}
which can be verified by treating the cases $u\gtrless Q^{\frac{2\beta+1}{\beta+1}} L_n^{\frac{\beta}{\beta+1}}$ separately. We complete the proof by separately checking the four cases
\begin{align*}
	(I): \quad &g(t)\geq L_n^{\frac{\beta}{\beta+1}} \quad \text{and}  \ \ \ \widehat {g}(t)\geq L_n^{\frac{\beta}{\beta+1}}, \\
	(II): \quad &g(t)\leq L_n^{\frac{\beta}{\beta+1}} \quad \text{and}  \ \ \ \widehat {g}(t)\geq L_n^{\frac{\beta}{\beta+1}}, \\
	(III): \quad &g(t)\geq L_n^{\frac{\beta}{\beta+1}} \quad \text{and}  \ \ \ \widehat {g}(t)\leq L_n^{\frac{\beta}{\beta+1}}, \\
	(IV): \quad &g(t)\leq L_n^{\frac{\beta}{\beta+1}} \quad \text{and}  \ \ \ \widehat {g}(t)\leq L_n^{\frac{\beta}{\beta+1}}.
\end{align*}
It is enough to show $C^{-1}\widehat{g}(t) \leq g(t) \leq C\widehat{g}(t)$, since then the rates are also equivalent up to constants. Recall that we are working on the event $\{|\widehat g(t)-g(t)|\leq C\widetilde r_{n,\beta}^P (g(t))\}$.

{\it (I):} Obviously, $\widehat g(t) \leq g(t) + C \widetilde r_{n,\beta}^P(g(t)) \leq Cg(t).$ For the other direction, using \eqref{eq.techn_res_tilde_rn_proof} with $Q=2C$ yields $g(t) \leq \widehat g(t) + C \widetilde r_{n,\beta}^P(g(t))\leq \widehat g(t) + \tfrac 12 g(t) \vee (2C)^{\frac{\beta}{\beta+1}}\widehat g(t).$ This proves that $C^{-1}\widehat{g}(t) \leq g(t) \leq C\widehat{g}(t)$ in case $(I).$

{\it (II)}: Since $\widehat g(t) \leq g(t) + C \widetilde r_{n,\beta}^P(g(t)) \leq CL_n^{\frac{\beta}{\beta+1}}$ we can conclude that $C^{-1}\widetilde r_{n,\beta}^P(\widehat{g}(t)) \leq \widetilde r_{n,\beta}^P(g(t))\leq C \widetilde r_{n,\beta}^P(\widehat{g}(t)).$

{\it (III):} Using \eqref{eq.techn_res_tilde_rn_proof} again, $g(t) \leq \widehat g(t) + C \widetilde r_{n,\beta}^P(g(t))\leq \widehat g(t)+ \tfrac 12 g(t) \vee (2C)^{\frac{\beta}{\beta+1}} L_n^{\frac{\beta}{\beta+1}}$ and we can then argue as in $(II).$

$(IV):$ Obviously $\widetilde r_{n,\beta}^P(g(t))=L_n^{\frac{\beta}{\beta+1}}=\widetilde r_{n,\beta}^P(\widehat{g}(t)).$

Together $(I)-(IV)$ yield the assertion of the theorem for the Poisson case.

Next, consider the link function $h(x)=2\arcsin\sqrt{x}$ from Section \ref{sec.Bernoulli_case}. Observe that for any $\beta>0$ there exists a finite integer $N=N(\beta)$ such that $\sup_{\|Kf\|_{\mH^\beta}\leq R} \|Kf-\widehat{Kf}\|_{\infty}\leq \tfrac 14,$ for all $n\geq N.$ For sufficiently large $n,$ we can thus assume that either $Kf(t), \widehat{Kf}(t)\in [0,3/4]$ or $Kf(t), \widehat{Kf}(t)\in [1/4,1].$ The rate function $\widetilde r_{n,\beta}^B$ can be linked to the rate function in the Poisson case via 
\begin{align*}
	&\frac{1}{2}\widetilde r_{n,\beta}^P(u)  \leq \widetilde r_{n,\beta}^B(u) \leq \widetilde r_{n,\beta}^P(u), \quad \quad \ u\in [0,3/4],\\
	&\frac{1}{2}\widetilde r_{n,\beta}^P(1-u)  \leq \widetilde r_{n,\beta}^B(u) \leq \widetilde r_{n,\beta}^P(1-u), \quad \quad \ u\in [1/4,1].
\end{align*}
Due to the above, the assertion for the link function $h(x)=2\arcsin\sqrt{x}$ follows from the Poisson case.

Finally, let us prove the result for the link function $h(x)=2^{-1/2}\log x.$ Observe that $\widehat{g}(t)\leq g(t) +C\widetilde r_{n,\beta}^G(g(t))\leq (1+C) g(t).$ If $g(t)\geq M_nn^{-\beta}$ with $M_n$ as in the proof of Theorem \ref{thm.est_link=log}, then $\widetilde r_{n,\beta}^G(g(t))\leq \xi_n g(t)$ with $\xi_n \rightarrow 0$ and so $g(t) \leq 2\widehat{g}(t)$ for all sufficiently large $n.$ This completes the proof for the link function $h(x)=2^{-1/2}\log x.$
\end{proof}

\section{Technical results}
\label{sec.technical_results}

The first three lemmas are Lemmas 5.1, 5.2 and Proposition 3.2 of \cite{RaySchmidt-Hieber2015c}. To determine the size of the wavelet coefficients of $h\circ f$ we have a standard bound.
\begin{lem}
\label{lem.wav_decay_gen}
Suppose that the wavelet function is $S$-regular. If $f\in C^\beta([0,1])$ for $0 < \beta<S,$ then there exists a function $g$ with $\|g\|_\infty \leq 1,$ such that for any $x_0\in (0,1),$ 
\begin{align*}
	\left| \int f(x) \psi_{j,k}(x) dx \right| 
	\leq \frac{1}{\lfloor \beta \rfloor!} \left| \int \big[ f^{(\lfloor \beta \rfloor)}(x_0+g(y) (y-x_0))- f^{(\lfloor \beta \rfloor)}(x_0) \big] (y-x_0)^{\lfloor \beta \rfloor} \psi_{j,k} (y) dy \right|.
\end{align*}
\end{lem}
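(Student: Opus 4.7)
The plan is to apply Taylor's theorem to $f$ around $x_0$ with the Lagrange form of the remainder, and then to eliminate the resulting polynomial part against $\psi_{j,k}$ using its vanishing moments.

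First I would apply Taylor's theorem: since $f\in C^\beta([0,1])$ with $\lfloor \beta \rfloor $ derivatives, for each $y\in[0,1]$ there exists $g(y)\in[0,1]$ such that
\begin{equation*}
f(y) = \sum_{i=0}^{\lfloor \beta \rfloor -1} \frac{f^{(i)}(x_0)}{i!}(y-x_0)^i + \frac{f^{(\lfloor \beta \rfloor)}(x_0+g(y)(y-x_0))}{\lfloor \beta \rfloor !}(y-x_0)^{\lfloor \beta \rfloor }.
\end{equation*}
Adding and subtracting $f^{(\lfloor \beta \rfloor )}(x_0)(y-x_0)^{\lfloor \beta \rfloor }/\lfloor \beta \rfloor !$ and regrouping yields $f(y) = P(y) + R(y)$, where $P$ is a polynomial of degree at most $\lfloor \beta \rfloor $ in $y$ (depending on $x_0$) and $R(y) = \lfloor \beta \rfloor !^{-1}[f^{(\lfloor \beta \rfloor )}(x_0+g(y)(y-x_0)) - f^{(\lfloor \beta \rfloor )}(x_0)](y-x_0)^{\lfloor \beta \rfloor }$. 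A standard measurable selection of $g$ via the intermediate value theorem guarantees that $\|g\|_\infty \leq 1$ and that $R$ is measurable.

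Next I would invoke the vanishing moment property. Because the basis $\{\psi_{j,k}\}$ comes from the $S$-regular boundary-corrected construction of \cite{Cohen1993} and $\lfloor \beta \rfloor < S$, each $\psi_{j,k}$ is orthogonal to polynomials on $[0,1]$ of degree at most $\lfloor \beta \rfloor $; in particular $\int_0^1 P(y)\psi_{j,k}(y)\,dy = 0$. Consequently
\begin{equation*}
\int_0^1 f(y)\psi_{j,k}(y)\,dy = \int_0^1 R(y)\psi_{j,k}(y)\,dy,
\end{equation*}
and taking absolute values produces exactly the claimed inequality.

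The only subtle point is the measurability of the map $y\mapsto g(y)$ (and hence of $R$), since the standard Lagrange form only asserts pointwise existence of $g(y)$. This is handled by selecting $g(y)$ to be, for instance, the infimum of all admissible values in $[0,1]$, which yields a Borel-measurable selection and does not affect the identity in the remainder term. Everything else is bookkeeping.
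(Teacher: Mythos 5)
Your proof is correct and is the standard (and almost certainly the intended) argument for this lemma: Taylor expand $f$ about $x_0$ to order $\lfloor \beta\rfloor$ with Lagrange remainder, absorb the $\lfloor\beta\rfloor$-th Taylor term into the polynomial, and annihilate the polynomial part of degree $\lfloor\beta\rfloor \leq S-1$ using the vanishing moments of the $S$-regular boundary-corrected wavelets of \cite{Cohen1993}. The paper itself does not reproduce a proof and instead cites Lemma 5.1 of \cite{RaySchmidt-Hieber2015c}, which proceeds along exactly these lines; your measurability remark is harmless but not strictly needed, since $R = f - P$ is measurable as a difference of measurable functions regardless of how $g$ is selected.
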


\begin{lem}
\label{lem.local_func_size}
Suppose that $f \in \mH^\beta$ with $\beta>0$ and let $a = a(\beta)>0$ be any constant satisfying $(e^a-1) + a^\beta / (\lfloor \beta \rfloor!) \leq 1/2.$ Then for 
\begin{align*}
	|h| \leq a \left( \frac{|f(x)|}{\|f\|_{\mH^\beta}}\right)^{1/\beta},
\end{align*}
we have
\begin{equation*}
|f(x+h) - f(x)| \leq \frac{1}{2} |f(x)| ,
\end{equation*}
implying in particular, $|f(x)|/2 \leq |f(x+h)| \leq 3|f(x)|/2$.
\end{lem}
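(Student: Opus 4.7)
The plan is to apply Taylor's theorem to $f$ around $x$ and estimate each resulting term using the defining control of the $\mH^\beta$-seminorm on derivatives. Set $N := \|f\|_{\mH^\beta}$, $r := |f(x)|$, and $n := \lfloor\beta\rfloor$. Unwinding the definition of $|f|_{\mH^\beta}$ yields the key pointwise estimate
$$ |f^{(j)}(x)| \leq N^{j/\beta}\, r^{(\beta-j)/\beta}, \qquad j=1,\ldots,n, $$
which interpolates between the local size $r$ and the global norm $N$. If $r=0$, the hypothesis forces $h=0$ and the conclusion is trivial, so I may assume $r>0$.

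For $n\geq 1$, Taylor's formula with Lagrange remainder supplies some $\theta\in[0,1]$ with
$$ f(x+h)-f(x) \;=\; \sum_{j=1}^n \frac{h^j}{j!}\, f^{(j)}(x) \;+\; \frac{h^n}{n!}\bigl[f^{(n)}(x+\theta h) - f^{(n)}(x)\bigr]. $$
Substituting the derivative bound above together with $|h|\leq a(r/N)^{1/\beta}$ into the $j$-th polynomial term, the powers of $r$ and $N$ cancel exactly to leave
$$ \Bigl|\tfrac{h^j}{j!} f^{(j)}(x)\Bigr| \;\leq\; \frac{a^j r}{j!}, $$
so the finite sum over $j=1,\ldots,n$ is majorised by $r(e^a-1)$. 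For the remainder, the H\"older continuity of $f^{(n)}$ combined with $|f|_{C^\beta}\leq \|f\|_{\mH^\beta}=N$ gives $|f^{(n)}(x+\theta h)-f^{(n)}(x)| \leq N|h|^{\beta-n}$, which together with the prefactor $|h|^n/n!$ and the choice of $|h|$ contributes at most $a^\beta r/n!$. Adding both contributions and invoking the defining inequality on $a$,
$$ |f(x+h)-f(x)| \;\leq\; r\bigl[(e^a-1) + a^\beta/n!\bigr] \;\leq\; r/2. $$

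The borderline case $\beta\leq 1$ (so $n=0$) is even simpler: the Taylor sum is vacuous and one obtains directly $|f(x+h)-f(x)| \leq |f|_{C^\beta}|h|^\beta \leq N\cdot a^\beta(r/N) = a^\beta r \leq r/2$ under the same condition on $a$. The secondary statement $|f(x)|/2 \leq |f(x+h)|\leq 3|f(x)|/2$ is then immediate from the (reverse) triangle inequality. I do not anticipate a serious obstacle; the whole argument is essentially a calibration exercise in which the definition of $\mH^\beta$ has been engineered precisely so that the scaling of $h$ cancels against the interpolated derivative estimate, leaving a geometric-type series governed by $a$. The only mild bookkeeping point is tracking the exponents $j/\beta$ and $(\beta-j)/\beta$ carefully when inserting $|h|\leq a(r/N)^{1/\beta}$.
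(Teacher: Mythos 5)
Your proof is correct. The paper itself does not prove this lemma --- it cites it as Lemma~5.2 of \cite{RaySchmidt-Hieber2015c} --- but the specific form of the constant condition $(e^a-1)+a^\beta/\lfloor\beta\rfloor!\leq 1/2$ is a transparent fingerprint of exactly the Taylor expansion with Lagrange remainder that you use (the geometric series producing $e^a-1$, the single remainder term producing $a^\beta/\lfloor\beta\rfloor!$), so your argument recovers the intended proof.
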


The previous lemma controls the local fluctuations of a function in $\mH^\beta$ and allows one to obtain the following bound on the decay of the wavelet coefficients.

\begin{lem}
\label{lem.wav_decay_small}
Suppose that $\psi$ is $S$-regular and that $f\in\mathcal{H}^\beta$ for $0 < \beta < S$. Then
\begin{equation*}
|\langle \sqrt{f}, \psi_{j,k} \rangle| \leq C(\psi,\beta) \|f\|_{\mathcal{H}^\beta}^{1/2} \, 2^{-\frac{j}{2}(\beta+1)} .
\end{equation*}
For $x_0\in[0,1]$, let $j(x_0)$ be the smallest integer satisfying $2^{j(x_0)}\geq |\supp(\psi)|a^{-1} (\|f\|_{\mH^\beta}/f(x_0))^{1/\beta}$ where $a=a(\beta)$ is the constant in Lemma \ref{lem.local_func_size}. Then for any wavelet $\psi_{j,k}$ with $j\geq j(x_0)$ and $x_0\in \supp(\psi_{j,k})$,
\begin{equation*}
|\langle \sqrt{f}, \psi_{j,k} \rangle| \leq C(\psi,\beta)\frac{ \|f\|_{\mH^\beta} }{\sqrt{f(x_0)}} 2^{-\frac{j}{2}(2\beta+1)}.
\end{equation*}
\end{lem}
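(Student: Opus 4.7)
I would apply Lemma \ref{lem.wav_decay_gen} to $g=\sqrt{f}$ with two different effective smoothness indices: globally at level $\beta/2$ to obtain the first inequality, and locally at level $\beta$ around the expansion centre $x_0$ to obtain the second. The common input is a pointwise estimate for the derivatives of $\sqrt{f}$ in terms of $f$ and $|f|_{\mH^\beta}$: by Faà di Bruno, for $1\leq m\leq\lfloor\beta\rfloor$ the derivative $(\sqrt{f})^{(m)}(x)$ is a finite $\R$-linear combination of monomials $f(x)^{1/2-k}\prod_{l}(f^{(l)}(x))^{\mu_l}$ with $\sum_l\mu_l=k$ and $\sum_l l\mu_l=m$. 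Inserting the $\mH^\beta$-bound $|f^{(l)}(x)|\leq|f|_{\mH^\beta}^{l/\beta}f(x)^{(\beta-l)/\beta}$ in each factor and collecting exponents gives
\begin{equation*}
|(\sqrt{f})^{(m)}(x)|\leq C_m\,|f|_{\mH^\beta}^{m/\beta}\,f(x)^{1/2-m/\beta},\qquad 1\leq m\leq\lfloor\beta\rfloor,
\end{equation*}
wherever $f(x)>0$.

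\textbf{First inequality.} Using the display above one shows $\sqrt{f}\in C^{\beta/2}([0,1])$ with $\|\sqrt{f}\|_{C^{\beta/2}}\leq C\|f\|_{\mH^\beta}^{1/2}$: for $m\leq\lfloor\beta/2\rfloor$ the exponent $1/2-m/\beta$ is nonnegative, so using $f\leq\|f\|_\infty\leq\|f\|_{\mH^\beta}$ one obtains $\|(\sqrt{f})^{(m)}\|_\infty\leq C\|f\|_{\mH^\beta}^{1/2}$; the $(\beta/2-\lfloor\beta/2\rfloor)$-Hölder remainder at the top derivative is handled by combining the pointwise bound with the elementary estimate $|\sqrt{a}-\sqrt{b}|\leq\sqrt{|a-b|}$, which absorbs points where $f$ vanishes and is precisely what halves the regularity globally. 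Applying Lemma \ref{lem.wav_decay_gen} to $\sqrt{f}$ with smoothness index $\beta/2$ and any $x_0\in\supp(\psi_{j,k})$, and using $|\supp(\psi_{j,k})|\lesssim 2^{-j}$ and $\|\psi_{j,k}\|_1\lesssim 2^{-j/2}$, then yields the first inequality.

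\textbf{Second inequality.} For $j\geq j(x_0)$ the definition of $j(x_0)$ ensures $|\supp(\psi)|\,2^{-j}\leq a(f(x_0)/\|f\|_{\mH^\beta})^{1/\beta}$, so Lemma \ref{lem.local_func_size} applied at $x_0$ gives $f(y)\geq f(x_0)/2$ for every $y\in\supp(\psi_{j,k})$. Substituting this into the Faà di Bruno bound and combining with the $(\beta-\lfloor\beta\rfloor)$-Hölder continuity of $f^{(\lfloor\beta\rfloor)}$ produces, on the support,
\begin{equation*}
\bigl|(\sqrt{f})^{(\lfloor\beta\rfloor)}(y)-(\sqrt{f})^{(\lfloor\beta\rfloor)}(x_0)\bigr|\leq C\,\|f\|_{\mH^\beta}\,f(x_0)^{-1/2}\,|y-x_0|^{\beta-\lfloor\beta\rfloor},
\end{equation*}
the exponent $1$ on $\|f\|_{\mH^\beta}$ arising from $\lfloor\beta\rfloor/\beta+(\beta-\lfloor\beta\rfloor)/\beta=1$. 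Plugging this into Lemma \ref{lem.wav_decay_gen} with expansion centre $x_0$ and smoothness index $\beta$, together with $|y-x_0|\lesssim 2^{-j}$ and $\|\psi_{j,k}\|_1\lesssim 2^{-j/2}$, gives the second inequality. The main obstacle will be the careful bookkeeping of exponents, which has to force exactly the power $\|f\|_{\mH^\beta}^{1/2}$ globally (dictated by the $1/2$-Hölder nature of $\sqrt{\cdot}$ near $0$) and exactly the power $\|f\|_{\mH^\beta}^{1}$ locally; once these pointwise derivative estimates are in hand, everything else is a routine application of Lemma \ref{lem.wav_decay_gen} on $\supp(\psi_{j,k})$.
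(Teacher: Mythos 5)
Your plan for the \emph{second} inequality is essentially the same as the paper's strategy for the analogous Bernoulli result (Lemma~\ref{lem.wav_decay_small_arcsin}): use Lemma~\ref{lem.local_func_size} to get $f\asymp f(x_0)$ on $\supp\psi_{j,k}$ once $j\geq j(x_0)$, deduce a local $C^\beta$ bound on $\sqrt{f}$ of size $\|f\|_{\mH^\beta}/\sqrt{f(x_0)}$, and feed that into Lemma~\ref{lem.wav_decay_gen}. This part of the proposal is sound.

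For the \emph{first} inequality you take a genuinely different route from the paper. The paper (judging from the proof of Lemma~\ref{lem.wav_decay_small_arcsin}, which replays the argument of the cited Proposition~3.2 for the $\arcsin\sqrt{\cdot}$ link) does not attempt to prove $\sqrt{f}\in C^{\beta/2}$ globally. Instead it regularizes, replacing $f$ by an $f_\eta$ bounded away from $0$, applies the local Hölder bound (the $\sqrt{\cdot}$ analogue of Lemma~\ref{lem.arcsin_lip}) on $f_\eta$ where the denominator is at worst $\sqrt{\eta}$, bounds $\|\sqrt{f}-\sqrt{f_\eta}\|_\infty\lesssim\sqrt{\eta}$, and then balances over $\eta\asymp 2^{-j\beta}\|f\|_{\mH^\beta}$. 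The regularization trick neatly sidesteps ever having to control $\sqrt{f}$ directly near zeros of $f$. Your approach --- show $\sqrt{f}\in C^{\beta/2}$ with $\|\sqrt{f}\|_{C^{\beta/2}}\lesssim\|f\|_{\mH^\beta}^{1/2}$ and invoke Lemma~\ref{lem.wav_decay_gen} at smoothness index $\beta/2$ --- does lead to the same decay rate and is in some ways cleaner conceptually; it makes explicit the ``half the regularity'' heuristic.

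There is, however, a gap in your justification of that $C^{\beta/2}$ claim. You handle the $(\beta/2-\lfloor\beta/2\rfloor)$-Hölder modulus of the top derivative $(\sqrt{f})^{(\lfloor\beta/2\rfloor)}$ by invoking $|\sqrt{a}-\sqrt{b}|\leq\sqrt{|a-b|}$. That estimate applies only to the \emph{zeroth} derivative and therefore settles only the case $\lfloor\beta/2\rfloor=0$, i.e.\ $\beta\leq 2$. For $\beta>2$ the relevant object is a derivative of $\sqrt{f}$, not a square root, and the elementary estimate says nothing about it. What is actually needed is a two-regime argument in the spirit of the proof of Lemma~\ref{lem.arcsin_lip}: when $|x-y|\leq a(\min(f(x),f(y))/\|f\|_{\mH^\beta})^{1/\beta}$, Lemma~\ref{lem.local_func_size} keeps $f$ comparable on $[x,y]$, so one can bound the difference of $\lfloor\beta/2\rfloor$-th derivatives by the mean value theorem using the pointwise bound on $(\sqrt{f})^{(\lfloor\beta/2\rfloor+1)}$; when $|x-y|$ exceeds that threshold, one bounds the two derivative values separately by $\|f\|_{\mH^\beta}^{\lfloor\beta/2\rfloor/\beta}f(\cdot)^{1/2-\lfloor\beta/2\rfloor/\beta}$ and trades the positive power of $f$ for a power of $|x-y|$. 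With this case analysis (checking along the way that $\lfloor\beta/2\rfloor+1\leq\lfloor\beta\rfloor$ so that Faà di Bruno has enough derivatives of $f$ to work with) the $C^{\beta/2}$ statement does go through with the claimed constant, but as written your sketch does not establish it.
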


We have analogous results for the function space $\mH_B^\beta$.

\begin{lem}
\label{lem.local_func_size_arcsin}
Suppose that $f \in \mH_B^\beta$ with $\beta>0$ and let $a = a(\beta)>0$ be any constant satisfying $(e^a-1) + a^\beta / (\lfloor \beta \rfloor!) \leq 1/2.$ Then for 
\begin{align*}
	|h| \leq a \left( \frac{\min(f(x),1-f(x))}{\|f\|_{\mH_B^\beta}}\right)^{1/\beta},
\end{align*}
we have
\begin{align*}
|f(x+h) - f(x)| = |(1-f(x)) - (1-f(x+h))| \leq \frac{1}{2} \min (f(x), 1-f(x)) ,
\end{align*}
implying in particular
\begin{align*}
\frac{1}{4}f(x)(1-f(x)) \leq f(x+h)(1-f(x+h)) \leq \frac{9}{4} f(x)(1-f(x)) .
\end{align*}
\end{lem}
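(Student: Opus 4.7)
The plan is to reduce the statement to a two-case split and in each case invoke (or repeat) Lemma \ref{lem.local_func_size}. The key structural observation is that $\|\cdot\|_{\mathcal{H}_B^\beta}$ is symmetric in $f$ and $1-f$: it dominates both $|f|_{\mathcal{H}^\beta}$ and $|1-f|_{\mathcal{H}^\beta}$, and $|f|_{C^\beta}=|1-f|_{C^\beta}$. So the Taylor-based flatness estimate that powers Lemma \ref{lem.local_func_size} applies to whichever of $f$ or $1-f$ is smaller at $x$.

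First I would split according to whether $\min(f(x),1-f(x))$ equals $f(x)$ or $1-f(x)$. In the case $f(x)\leq 1-f(x)$, the assumption $|h|\leq a(f(x)/\|f\|_{\mathcal{H}_B^\beta})^{1/\beta}$ is at least as strong as $|h|\leq a(f(x)/\|f\|_{\mathcal{H}^\beta})^{1/\beta}$, since $\|f\|_{\mathcal{H}^\beta}\leq \|f\|_{\mathcal{H}_B^\beta}$ by the defining decomposition. Lemma \ref{lem.local_func_size} then applies directly to $f$ and yields $|f(x+h)-f(x)|\leq f(x)/2=\min(f(x),1-f(x))/2$.

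In the symmetric case $1-f(x)\leq f(x)$, I would repeat the Taylor argument underlying Lemma \ref{lem.local_func_size} with $1-f$ in place of $f$. This is legitimate because $1-f\geq 0$, because $(1-f)^{(j)}=-f^{(j)}$ for $j\geq 1$ so that $|(1-f)^{(j)}(x)|^\beta\leq |1-f|_{\mathcal{H}^\beta}^{j}(1-f(x))^{\beta-j}\leq \|f\|_{\mathcal{H}_B^\beta}^{j}(1-f(x))^{\beta-j}$ by definition of the seminorm, and because $|1-f|_{C^\beta}=|f|_{C^\beta}\leq \|f\|_{\mathcal{H}_B^\beta}$ provides the remainder control. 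Consequently, under the hypothesis $|h|\leq a((1-f(x))/\|f\|_{\mathcal{H}_B^\beta})^{1/\beta}$, each term in the Taylor expansion of $(1-f)$ at $x$ is bounded by $a^j(1-f(x))/j!$ for $1\leq j\leq \lfloor\beta\rfloor$, and the H\"older remainder by $a^\beta(1-f(x))/\lfloor\beta\rfloor!$. Summing and using the defining inequality $(e^a-1)+a^\beta/\lfloor\beta\rfloor!\leq 1/2$ delivers $|(1-f)(x+h)-(1-f)(x)|\leq (1-f(x))/2$, which is the desired $|f(x+h)-f(x)|\leq \min(f(x),1-f(x))/2$.

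The pointwise product bound then follows by elementary algebra. From $|f(x+h)-f(x)|\leq \min(f(x),1-f(x))/2$ I get both $f(x+h)\in[f(x)/2,\,3f(x)/2]$ and $1-f(x+h)\in[(1-f(x))/2,\,3(1-f(x))/2]$ (using $\min/2\leq f(x)/2$ and $\min/2\leq (1-f(x))/2$), so multiplying gives the claimed $\tfrac14 f(x)(1-f(x))\leq f(x+h)(1-f(x+h))\leq \tfrac94 f(x)(1-f(x))$. I do not expect a real technical obstacle; the only point needing care is making sure that when transporting the proof of Lemma \ref{lem.local_func_size} to $1-f$, one only uses norms dominated by $\|f\|_{\mathcal{H}_B^\beta}$, which is exactly what the symmetric definition of $\mathcal{H}_B^\beta$ arranges.
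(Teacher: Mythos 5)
Your proof is correct and takes essentially the same route as the paper's: the paper observes that the definition of $\mH_B^\beta$ gives $|f^{(j)}(x)|\leq \|f\|_{\mH_B^\beta}^{j/\beta}\min(f(x),1-f(x))^{(\beta-j)/\beta}$ and then reruns the Taylor-expansion argument of Lemma \ref{lem.local_func_size} once, which is exactly your two cases folded into a single bound. Your observation that only $|1-f|_{\mH^\beta}$ and $|1-f|_{C^\beta}=|f|_{C^\beta}$ (not the full $\|1-f\|_{\mH^\beta}$, whose $\|\cdot\|_\infty$ component is irrelevant) enter the Taylor estimate is precisely the right point of care.
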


\begin{proof}
Recall that for $f\in \mathcal{H}_B^\beta$, we have $|f^{(j)}(x)| \leq \|f\|_{\mH_B^\beta}^{j/\beta} \min (f(x),1-f(x))^{\frac{\beta-j}{\beta}}$ for all $x \in [0,1]$. The proof then follows as in Lemma 5.2 of \cite{RaySchmidt-Hieber2015c}.
\end{proof}

The next result shows that if $f\in \mH_B^\beta$, then the function $\arcsin \sqrt{f}$ satisfies a H\"older-type condition with exponent $\beta$ and locally varying H\"older constant. The proof relies heavily on Fa\`a di Bruno's formula, which generalizes the chain rule to higher derivatives \cite{Johnson2002}:
\begin{equation}
\frac{d^k}{dx^k} h(f(x)) = \sum_{(m_1,...,m_k) \in \mathcal{M}_k} \frac{k!}{m_1!...m_k!} h^{(m_1+...+m_k)}(f(x)) \prod_{j=1}^k \left( \frac{f^{(j)}(x)}{j!} \right)^{m_j} ,
\label{eq.FaaDiBruno}
\end{equation}
where $\mathcal{M}_k$ is the set of all $k$-tuples of non-negative integers satisfying $\sum_{j=1}^k j m_j = k$. We can relate the derivatives appearing in \eqref{eq.FaaDiBruno} to $f$ using the seminorm $|\cdot|_{\mH_B^\beta}$.

\begin{lem}\label{lem.arcsin_lip}
For $\beta>0$, there exists a constant $C(\beta)$ such that for all $f \in \mathcal{H}_B^\beta$, $0\leq k < \beta$ and $x,y\in[0,1]$, 
\begin{equation*}
|(\arcsin \sqrt{f(x)})^{( \lfloor \beta \rfloor )} - (\arcsin \sqrt{f(y)})^{(\lfloor \beta \rfloor)} | \leq \frac{C(\beta)\|f\|_{\mathcal{H}_B^\beta} |x-y|^{\beta - \lfloor \beta \rfloor} }{\min (\sqrt{f(x)(1-f(x))}, \sqrt{f(y)(1-f(y))}) } ,
\end{equation*}
and
\begin{align}
\left| \frac{d^k}{dx^k} \arcsin \sqrt{f(x)} \right| \leq C(\beta) \|f\|_{\mH_B^\beta}^{k/\beta} \left[ f(x)(1-f(x))\right]^{1/2-k/\beta}.
\label{eq.deriv_bds_arcsin}
\end{align}
Moreover, if $f\in [\varepsilon, 1-\varepsilon]$ for some $0<\varepsilon< 1/2,$
\begin{align*}
\|\arcsin \sqrt{f}\|_{\mathcal{H}_B^\beta} \leq \frac{C(\beta)}{\sqrt{\varepsilon}} \|f\|_{\mathcal{H}_B^\beta}.
\end{align*}
\end{lem}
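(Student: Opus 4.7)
Let $h(u)=\arcsin\sqrt u$ and $\kappa:=\|f\|_{\mH_B^\beta}$. I would prove the three parts of the lemma in the order in which they are stated, since each builds on the previous one.

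For the pointwise bound \eqref{eq.deriv_bds_arcsin}, a direct induction starting from $h'(u)=[2\sqrt{u(1-u)}]^{-1}$ gives $|h^{(m)}(u)|\leq C_m[u(1-u)]^{1/2-m}$ for every integer $m\geq 1$ and $u\in(0,1)$. Apply Fa\`a di Bruno's formula \eqref{eq.FaaDiBruno} to $(\arcsin\sqrt f)^{(k)}$, insert this estimate together with the $\mH_B^\beta$-bound $|f^{(j)}(x)|\leq C\kappa^{j/\beta}[f(x)(1-f(x))]^{(\beta-j)/\beta}$ (which is valid via $\min(f,1-f)\asymp f(1-f)$), and observe that in every summand, writing $M=\sum_j m_j$, the exponents collapse to $\kappa^{k/\beta}[f(x)(1-f(x))]^{1/2-k/\beta}$. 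Summing the finitely many Fa\`a di Bruno terms yields \eqref{eq.deriv_bds_arcsin}.

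For the H\"older-type bound on $(\arcsin\sqrt f)^{(k)}$ with $k=\lfloor\beta\rfloor$ and $\eta:=\beta-k\in(0,1]$, assume WLOG $f(x)(1-f(x))\leq f(y)(1-f(y))$ and introduce the local scale $\delta_x:=a(\min(f(x),1-f(x))/\kappa)^{1/\beta}$ from Lemma \ref{lem.local_func_size_arcsin}. When $|x-y|\geq \delta_x$, apply \eqref{eq.deriv_bds_arcsin} at $x$ and $y$ and use the triangle inequality; the lower bound $|x-y|^\eta\gtrsim \delta_x^\eta \asymp[f(x)(1-f(x))/\kappa]^{\eta/\beta}$ converts the resulting expression into the desired $C\kappa|x-y|^\eta/\sqrt{f(x)(1-f(x))}$. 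When $|x-y|<\delta_x$, Lemma \ref{lem.local_func_size_arcsin} gives $f(1-f)\asymp f(x)(1-f(x))$ on $[x,y]$; expand $(\arcsin\sqrt f)^{(k)}$ via Fa\`a di Bruno and telescope each summand factor by factor. The change of $h^{(M)}\circ f$ is bounded by $\|h^{(M+1)}\|\cdot|f(x)-f(y)|$, each lower-order factor $f^{(j)}$ with $j<k$ changes by at most $\|f^{(j+1)}\|_{L^\infty[x,y]}|x-y|$, and the factor $f^{(k)}$ (when present) contributes $\kappa|x-y|^\eta$ through the $C^\beta$-seminorm. Converting the leftover powers of $|x-y|$ into $|x-y|^\eta$ via $|x-y|^{1-\eta}\leq\delta_x^{1-\eta}$, every Fa\`a di Bruno summand then produces exactly $C\kappa[f(x)(1-f(x))]^{-1/2}|x-y|^\eta$.

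For the final claim, the hypothesis $f\in[\varepsilon,1-\varepsilon]$ forces $f(1-f)\geq\varepsilon/2$, so \eqref{eq.deriv_bds_arcsin} and the H\"older inequality just proved become uniform bounds of size $C(\beta)\varepsilon^{-1/2}\kappa$, controlling $\|\arcsin\sqrt f\|_{C^\beta}$; the seminorms $|\arcsin\sqrt f|_{\mH^\beta}$ and $|1-\arcsin\sqrt f|_{\mH^\beta}$ then follow by combining \eqref{eq.deriv_bds_arcsin} with the fact that $\arcsin\sqrt f$ and $1-\arcsin\sqrt f$ are bounded below by positive constants on the range where the seminorms are meaningful. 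The main obstacle is the bookkeeping inside the telescoping argument of the near-diagonal case: one must verify that the powers of $\kappa$ and of $f(x)(1-f(x))$ combine across all Fa\`a di Bruno summands so that a single factor $|x-y|^\eta$ emerges each time. The key observation enabling this is that exactly one factor per summand absorbs the $\eta$-H\"older exponent, while every other factor contributes an extra power of $|x-y|$ that is converted via the local scale $\delta_x$.
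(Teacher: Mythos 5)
Your proposal is correct and follows essentially the same architecture as the paper's proof: split into the near-diagonal case $|x-y|\lesssim \delta_x$ (where Lemma \ref{lem.local_func_size_arcsin} gives $f(1-f)\asymp$ constant on $[x,y]$, so Fa\`a di Bruno and telescoping work) and the far case (where triangle inequality plus the pointwise derivative bound \eqref{eq.deriv_bds_arcsin} suffices). The one genuine technical difference is in how you bound the change of $h^{(M)}\circ f$ in the near-diagonal telescoping: you apply the mean-value theorem to $h^{(M)}$, producing a single power of $|x-y|$, and then convert the leftover $|x-y|^{1-\eta}$ via $\delta_x^{1-\eta}$. The paper instead derives an explicit closed form for $h^{(M)}(t)$ as a sum of terms $t^{-r-q-1/2}(1-t)^{-p-q+r-1/2}$ (equation \eqref{arcsin lemma eq1}) and applies prepackaged H\"older bounds on negative powers (its \eqref{eq.inv_mom_f_diff_bd}--\eqref{eq.inv_mom_1-f_diff_bd}) that already contain the $|x-y|^{\beta-k}$ factor, with no exponent conversion needed. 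Your route is a bit more elementary and avoids the explicit algebraic form of $h^{(M)}$ for the difference (though you still need it, via your induction, for the pointwise bound $|h^{(m)}(u)|\lesssim[u(1-u)]^{1/2-m}$). You also choose to prove \eqref{eq.deriv_bds_arcsin} first and use it; the paper proves the H\"older bound first and extracts \eqref{eq.deriv_bds_arcsin} as a byproduct (its display \eqref{arcsin lemma eq7}). Both orderings are fine, since \eqref{eq.deriv_bds_arcsin} does not itself depend on the H\"older statement. One small caveat: for $\beta\le 1$ (so $k=0$) the telescoping step is empty and your argument should fall back, as the paper does, to the mean-value theorem on $h$ together with the $C^\beta$-seminorm bound $|f(x)-f(y)|\le|f|_{C^\beta}|x-y|^\beta$ rather than $\|f'\|_\infty|x-y|$, since $f'$ is not controlled by the $\mH_B^\beta$-norm when $\beta\le 1$; this is an easy special case you should state explicitly.
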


\begin{rem}\label{rem.arcsin_lip}
It is actually proved below that the first inequality of Lemma \ref{lem.arcsin_lip} holds with $|f|_{C^\beta} + |f|_{\mH^\beta} + |1-f|_{\mH^\beta}$ instead of $\|f\|_{\mH_B^\beta}$.
\end{rem}

\begin{proof}
The proof follows the same approach as that of Lemma 5.3 in \cite{RaySchmidt-Hieber2015c}. For convenience write $h(x) = \arcsin \sqrt{x}$, $R = |f|_{C^\beta} + |f|_{\mH^\beta} + |1-f|_{\mH^\beta} \leq \|f\|_{\mathcal{H}_B^\beta}$ and $\delta(x) = \min (f(x),1-f(x))$ and without loss of generality assume $\delta(y)\leq \delta(x).$ Let $C(\beta)$ be a generic $\beta$-dependent constant, which may change from line to line. 

We first prove the result for $\beta \in (0,1].$ Noting that $h'(x) = 1/\sqrt{x(1-x)}$ is decreasing on $[0,1/2]$ and increasing on $[1/2,1]$ and applying the mean-value theorem,
\begin{align*}
| h(f(x)) - h(f(y))| \leq \max_{f(x) \wedge f(y) \leq t \leq f(x) \vee f(y)} h'(t) |f(x) - f(y)| \lesssim  \frac{R |x-y|^\beta}{\sqrt{f(y)(1-f(y))}} .
\end{align*}
Consider now $\beta > 1$ and write $k = \lfloor \beta \rfloor$ (the subsequent arguments also hold for all $k<\beta$ with certain modifications). We consider separately the two cases where $|x-y|$ is small and large.

Suppose first that $|x-y| \leq a(\delta(x)/R)^{1/\beta}$ with $a$ as in Lemma \ref{lem.local_func_size_arcsin}. By Lemma \ref{lem.local_func_size_arcsin} we have $\delta(y)/4 \leq \delta(x) \leq 9\delta(y)/4$ and this will be used frequently without mention below. We shall establish the result by proving a H\"older bound for each of the summands in Fa\`a di Bruno's formula \eqref{eq.FaaDiBruno} individually. Fix a $k$-tuple $(m_1,...,m_k) \in \mathcal{M}_k$ and write $M:=\sum_{j=1}^k m_j$. By the triangle inequality, $|  h^{(M)}(f(x))  \prod_{j=1}^k \big( f^{(j)}(x) \big)^{m_j} -  h^{(M)}(f(y)) \prod_{j=1}^k \big( f^{(j)}(y) \big)^{m_j}  \big|\leq (I)+(II)$ with
\begin{align*}
	&(I):=\big |  \big( h^{(M)}(f(x)) - h^{(M)}(f(y))  \big) \prod_{j=1}^k \big( f^{(j)}(x) \big)^{m_j}  \big|,  \\
	&(II):=\big| h^{(M)}(f(y)) \big(  \prod_{j=1}^k \big( f^{(j)}(x) \big)^{m_j} -  \prod_{j=1}^k \big( f^{(j)}(y) \big)^{m_j}  \big)  \big| .
\end{align*}
We bound $(I)$ and $(II)$ separately.

We first require some additional results. Note that $h^{(M)}(t) = \tfrac{1}{2} \tfrac{d^{M-1}}{dt^{M-1}} (t(1-t))^{-1/2}$ since $M\geq1$. Applying \eqref{eq.FaaDiBruno} to this last expression (with $f(t) = t-t^2$) and using the binomial theorem for $1-2t = (1-t)-t$ yields for $t \in (0,1)$,
\begin{align}
h^{(M)} (t) = \sum C_{p,q}\frac{(1-2t)^p}{(t(1-t))^{p+q+\frac 12}}
= \sum \sum_{r=0}^p C_{p,q,r} \frac{1}{t^{r+q+\frac 12}(1-t)^{p+q-r+\frac 12}}
\label{arcsin lemma eq1}
\end{align}
for suitable constants $C_{p,q}, C_{p,q,r},$ with the unspecified sums taken over all non-negative integers $p,q$ satisfying $p+2q=M-1$ (these are the tuples of the form $(p,q,0,...,0)\in \mathcal{M}_{M-1}$ in \eqref{eq.FaaDiBruno} -- since $\tfrac{d^3}{dt^3} (t-t^2)=0$, all the other terms in \eqref{eq.FaaDiBruno} equal zero). By the definition of $\mathcal{H}_B^\beta$ and since $\sum jm_j=k$,
\begin{equation}
\big| \prod_{j=1}^k (f^{(j)}(x))^{m_j} \big |  \leq  \prod_{j=1}^k R^\frac{jm_j}{\beta} \delta(x)^\frac{(\beta-j)m_j}{\beta} = R^\frac{k}{\beta} \delta(x)^{M - k/\beta} .
\label{arcsin lemma eq2}
\end{equation}
Arguing as in (5.5) and (5.6) of \cite{RaySchmidt-Hieber2015c}, for any integer $\ell,$
\begin{align}
	|f(x)^{-\ell -\frac{1}{2}}- f(y)^{-\ell -\frac{1}{2}}|
	\leq CR^{1-\frac{k}{\beta}} |f(y)|^{-\ell - \frac 32 +\frac{k}{\beta}} |x-y|^{\beta-k}
	\label{eq.inv_mom_f_diff_bd}
\end{align}
and the same bound holds if $f$ is replaced by $1-f,$ that is,
\begin{align}
	|(1-f(x))^{-\ell -\frac{1}{2}}- (1-f(y))^{-\ell -\frac{1}{2}}|
	\leq CR^{1-\frac{k}{\beta}} |1-f(y)|^{-\ell - \frac 32 +\frac{k}{\beta}} |x-y|^{\beta-k}.
	\label{eq.inv_mom_1-f_diff_bd}
\end{align}

{\it $(I)$:} Rewriting $h^{(M)}$ using \eqref{arcsin lemma eq1} and controlling the difference between the terms $f(x)^{-r-q-\frac 12}(1-f(x))^{-p-q+r-\frac 12}$ and $f(y)^{-r-q-\frac 12}(1-f(y))^{-p-q+r-\frac 12}$ with \eqref{eq.inv_mom_f_diff_bd} and \eqref{eq.inv_mom_1-f_diff_bd} gives
\begin{align*}
	\big| h^{(M)}(f(x)) - h^{(M)}(f(y))  \big|
	\leq CR^{1-\frac{k}{\beta}}\sum \sum_{r=0}^p\frac{f(y)^{\frac{k}{\beta}-1} + (1-f(y))^{\frac k{\beta} -1}}{f(y)^{r+q+\frac 12}(1-f(y))^{p+q-r+\frac 12}} |x-y|^{\beta-k}.
\end{align*}
Recall that $r+q \leq p+q\leq  M-1.$ Together with \eqref{arcsin lemma eq2}, we finally obtain
\begin{align*}
	(I) \leq C R \delta(y)^{-1/2} |x-y|^{\beta-k}.
\end{align*}

{\it $(II)$:}  We recall the following bound from \cite{RaySchmidt-Hieber2015c} (specifically (5.7) is shown to bound the second term of (5.3) in that paper), adjusted slightly since $f\in\mH_B^\beta$ rather than $\mH^\beta$ in the present case,
\begin{align*}
\left| \left(  \prod_{j=1}^k \big( f^{(j)}(x) \big)^{m_j} -  \prod_{j=1}^k \big( f^{(j)}(y) \big)^{m_j}  \right)  \right| \leq CR \delta(y)^{M-1} |x-y|^{\beta-k} .
\end{align*}
Together with the first equality in \eqref{arcsin lemma eq1} and since $p+q\leq M-1$, this proves that $(II) \leq C R \delta(y)^{-1/2} |x-y|^{\beta-k}.$

The bounds for $(I)$ and $(II)$ complete the proof in the case $|x-y| \leq a(\delta(x)/R)^{1/\beta}$.

The first equality in \eqref{arcsin lemma eq1} implies that $h^{(M)}(f(x)) \leq C\delta(x)^{\frac 12 -M}.$ Substituting this and \eqref{arcsin lemma eq2} into Fa\`a di Bruno's formula \eqref{eq.FaaDiBruno} gives
\begin{equation}
\begin{split}
\left| \frac{d^k}{dx^k} h(f(x)) \right| & \lesssim R^{k/\beta} \delta(x)^{1/2-k/\beta}.
\end{split}
\label{arcsin lemma eq7}
\end{equation}
For $|x-y| > a(\delta(x)/R)^{1/\beta},$ this yields
\begin{align*}
|h(f(x))^{(k)} - h(f(y))^{(k)}| & \leq CR^{k/\beta} (\delta(x)^{1/2-k/\beta} + \delta(y)^{1/2-k/\beta}) \\
& \leq \frac{CR}{\sqrt{\delta(y)}} \left( \frac{\delta(x)}{R} \right)^\frac{\beta-k}{\beta} \leq \frac{CR|x-y|^{\beta-k} }{\sqrt{f(y)(1-f(y))}}
\end{align*}
as required. This completes the proof of the first statement.

Inequality \eqref{eq.deriv_bds_arcsin} follows directly from \eqref{arcsin lemma eq7}, since this last expression also holds for all $0\leq k<\beta$. For the last assertion of the lemma, suppose now $f \in [\varepsilon, 1-\varepsilon]$ for $\varepsilon\in(0,1/2).$ The first statement of the lemma yields that $\|\arcsin \sqrt{f}\|_{C^\beta} \leq C \|f\|_{\mathcal{H}_B^\beta}/\sqrt{\varepsilon}$. By \eqref{eq.deriv_bds_arcsin},
\begin{align*}
|h(f(x))^{(j)}| \leq C\|f\|_{\mH_B^\beta}^{j/\beta} \delta(x)^{1/2-j/\beta} (\delta(x)/\varepsilon)^{j/(2\beta)} = C (\|f\|_{\mH_B^\beta}   /\sqrt{\varepsilon})^\frac{j}{\beta} (\sqrt{\delta(x)})^{\frac{\beta-j}{\beta}}
\end{align*}
for all $1\leq j<\beta,$ implying that $|\arcsin\sqrt{f}|_{\mathcal{H}^\beta} \leq C\|f\|_{\mathcal{H}_B^\beta} /\sqrt{\varepsilon}$.
\end{proof}

Using the previous result, we have a direct analogue of Lemma \ref{lem.wav_decay_small} for the Bernoulli case.

\begin{lem}
\label{lem.wav_decay_small_arcsin}
Suppose that $\psi$ is $S$-regular and that $f \in \mathcal{H}_B^\beta$ for $0 < \beta < S$. Then
\begin{equation*}
|\langle \arcsin \sqrt{f} , \psi_{j,k} \rangle | \leq  C(\psi,\beta) \|f\|_{\mathcal{H}_B^\beta}^{1/2} \, 2^{-\frac{j}{2}(\beta+1)}.
\end{equation*}
For $x_0\in[0,1]$, let $j(x_0)$ be the smallest integer satisfying
\begin{align*}
2^{j(x_0)}\geq \frac{|\supp(\psi)|}{a} \left( \frac{\|f\|_{\mH_B^\beta} }{\min(f(x_0),1-f(x_0))} \right)^{1/\beta},
\end{align*}
where $a=a(\beta)$ is the constant in Lemma \ref{lem.local_func_size_arcsin}. Then for any wavelet $\psi_{j,k}$ with $j\geq j(x_0)$ and $x_0\in \supp(\psi_{j,k})$,
\begin{equation*}
|\langle \arcsin \sqrt{f}, \psi_{j,k} \rangle| \leq C(\psi,\beta)\frac{ \|f\|_{\mH_B^\beta} }{\sqrt{f(x_0)(1-f(x_0))}} 2^{-\frac{j}{2}(2\beta+1)}.
\end{equation*}
\end{lem}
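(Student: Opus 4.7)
The proof will proceed by adapting the argument for Lemma \ref{lem.wav_decay_small} (the analog for $\sqrt f$ with $f\in \mH^\beta$), using Lemma \ref{lem.local_func_size_arcsin} in place of Lemma \ref{lem.local_func_size} to control local oscillations of $f$, and Lemma \ref{lem.arcsin_lip} for pointwise and H\"older-type bounds on the derivatives of $\arcsin\sqrt f$. The two stated bounds correspond to two regimes: a global bound that uses only the $\beta/2$-H\"older smoothness that $\arcsin\sqrt f$ inherits from $f\in\mH_B^\beta$, and a refined bound in the high-frequency regime where the wavelet is concentrated in a neighborhood of $x_0$ on which $f(1-f)$ is essentially constant.

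For the refined (high-frequency) bound, fix $j\geq j(x_0)$ with $x_0\in\supp\psi_{j,k}$. By the definition of $j(x_0)$, every $y\in\supp\psi_{j,k}$ satisfies
\[
|y-x_0|\ \leq\ |\supp\psi|\,2^{-j}\ \leq\ a\left(\frac{\min(f(x_0),1-f(x_0))}{\|f\|_{\mH_B^\beta}}\right)^{1/\beta},
\]
so Lemma \ref{lem.local_func_size_arcsin} gives $f(y)(1-f(y))\asymp f(x_0)(1-f(x_0))$ throughout $\supp\psi_{j,k}$. Apply Lemma \ref{lem.wav_decay_gen} to $g=\arcsin\sqrt f$ to get
\[
|\langle g,\psi_{j,k}\rangle|\ \leq\ \frac{1}{\lfloor\beta\rfloor!}\int \bigl|g^{(\lfloor\beta\rfloor)}(\xi_y)-g^{(\lfloor\beta\rfloor)}(x_0)\bigr|\,|y-x_0|^{\lfloor\beta\rfloor}\,|\psi_{j,k}(y)|\,dy,
\]
with $\xi_y$ between $x_0$ and $y$. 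By the first statement of Lemma \ref{lem.arcsin_lip} and the local equivalence above, the integrand is bounded by $C\|f\|_{\mH_B^\beta}|y-x_0|^{\beta}/\sqrt{f(x_0)(1-f(x_0))}$. Using $|y-x_0|\leq C2^{-j}$ on the support and the standard $L^1$-estimate $\|\psi_{j,k}\|_{L^1}\leq C 2^{-j/2}$ then yields the stated decay $C\|f\|_{\mH_B^\beta}\bigl[f(x_0)(1-f(x_0))\bigr]^{-1/2}2^{-j(2\beta+1)/2}$.

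For the first, universally valid bound, the idea is that $\arcsin\sqrt f$ has global $\beta/2$-H\"older smoothness with seminorm controlled by a multiple of $\|f\|_{\mH_B^\beta}^{1/2}$, a consequence of $\arcsin\sqrt{\cdot}$ behaving like $\sqrt{\cdot}$ at $0$ and like $\sqrt{1-\cdot}$ at $1$, together with the flatness encoded by $\mH_B^\beta$ at the corresponding irregular points of $f$. Concretely, inequality \eqref{eq.deriv_bds_arcsin} of Lemma \ref{lem.arcsin_lip} with $k\leq \lfloor\beta/2\rfloor$ gives $|(\arcsin\sqrt f)^{(k)}(x)|\leq C\|f\|_{\mH_B^\beta}^{k/\beta}[f(x)(1-f(x))]^{1/2-k/\beta}\leq C\|f\|_{\mH_B^\beta}^{1/2}$ (using $f(1-f)\leq 1/4$ and non-negativity of the exponent), and a Fa\`a di Bruno argument parallel to that in the proof of Lemma \ref{lem.arcsin_lip} establishes a uniform $(\beta/2-\lfloor\beta/2\rfloor)$-H\"older bound on $(\arcsin\sqrt f)^{(\lfloor\beta/2\rfloor)}$ with seminorm again of order $\|f\|_{\mH_B^\beta}^{1/2}$. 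Applying Lemma \ref{lem.wav_decay_gen} at the reduced order $\lfloor\beta/2\rfloor$ (admissible since $S>\beta>\beta/2$) and combining with $\|\psi_{j,k}\|_{L^1}\leq C2^{-j/2}$ produces $C\|f\|_{\mH_B^\beta}^{1/2}2^{-j(\beta+1)/2}$.

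The main technical obstacle will be the bookkeeping in the first bound: verifying that the Fa\`a di Bruno expansion, with the singular derivatives of $\arcsin\sqrt{\cdot}$ at the endpoints $0$ and $1$ compensated by the flatness of $f$, gives a H\"older condition on the $\lfloor\beta/2\rfloor$-th derivative whose constant scales as $\|f\|_{\mH_B^\beta}^{1/2}$ rather than a higher power. This is the Bernoulli analog of the statement implicit in the proof of Lemma \ref{lem.wav_decay_small} for $\sqrt f\in C^{\beta/2}$; the present case requires a symmetrization argument treating the two irregular points $0$ and $1$ on equal footing, which is already encoded in the definitions of $|f|_{\mH^\beta}+|1-f|_{\mH^\beta}$.
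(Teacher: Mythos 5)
Your argument for the second (high-frequency) bound follows the paper: localize $f(1-f)\asymp f(x_0)(1-f(x_0))$ on $\supp\psi_{j,k}$ via Lemma~\ref{lem.local_func_size_arcsin}, then insert the local H\"older bound of Lemma~\ref{lem.arcsin_lip} into Lemma~\ref{lem.wav_decay_gen}. For the first (global) bound your route is genuinely different. The paper uses a regularization trick: it sets $f_\eta=(f+\eta)(1-2\eta)\in[c\eta,1-c\eta]$, notes the relevant seminorms of $f_\eta$ do not increase, obtains the $2^{-j(2\beta+1)/2}$ decay for $\arcsin\sqrt{f_\eta}$ with a factor $\eta^{-1/2}$ via the last display of Lemma~\ref{lem.arcsin_lip} and Lemma~\ref{lem.wav_decay_gen}, bounds the approximation error $\|\arcsin\sqrt f-\arcsin\sqrt{f_\eta}\|_\infty\lesssim\sqrt\eta$, and balances at $\eta=2^{-j\beta}\|f\|_{\mH_B^\beta}$. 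You instead try to establish directly that $\arcsin\sqrt f\in C^{\beta/2}$ with seminorm $O(\|f\|_{\mH_B^\beta}^{1/2})$, the Bernoulli analogue of the $\sqrt f\in C^{\beta/2}$ fact underlying Lemma~\ref{lem.wav_decay_small}. Your route is more transparent and would yield a reusable smoothness lemma; the paper's avoids proving that lemma at the cost of a less conceptual balancing argument.

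But your key step is left as a gap. You assert that ``a Fa\`a di Bruno argument parallel to Lemma~\ref{lem.arcsin_lip}'' gives a \emph{uniform} $(\beta/2-\lfloor\beta/2\rfloor)$-H\"older bound on $(\arcsin\sqrt f)^{(\lfloor\beta/2\rfloor)}$, but Lemma~\ref{lem.arcsin_lip} only gives a \emph{weighted} bound with the singular weight $[f(1-f)]^{-1/2}$, so it cannot be invoked verbatim. The uniform bound does follow from \eqref{eq.deriv_bds_arcsin} via the same two-case device the paper uses repeatedly (write $m=\lfloor\beta/2\rfloor$, $\delta=\min(f,1-f)$, WLOG $\delta(y)\leq\delta(x)$): when $|x-y|>a(\delta(x)/\|f\|_{\mH_B^\beta})^{1/\beta}$, apply \eqref{eq.deriv_bds_arcsin} at order $m$ at each point and use $\delta(x),\delta(y)\leq\|f\|_{\mH_B^\beta}(|x-y|/a)^\beta$; when $|x-y|\leq a(\delta(x)/\|f\|_{\mH_B^\beta})^{1/\beta}$ and $\beta>1$, apply the mean value theorem with \eqref{eq.deriv_bds_arcsin} at order $m+1\leq\lfloor\beta\rfloor$, using Lemma~\ref{lem.local_func_size_arcsin} to replace $\delta$ along the segment by $\delta(x)$ and the lower bound $\delta(x)\geq\|f\|_{\mH_B^\beta}(|x-y|/a)^\beta$ to cancel the now-negative exponent of $\delta$; for $\beta\leq1$, the global $1/2$-H\"older continuity of $\arcsin\sqrt{\cdot}$ on $[0,1]$ gives the claim directly. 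Note also the slip that $f(1-f)\leq1/4$ only yields $|(\arcsin\sqrt f)^{(k)}|\lesssim\|f\|_{\mH_B^\beta}^{k/\beta}$, which is \emph{not} $\lesssim\|f\|_{\mH_B^\beta}^{1/2}$ when $\|f\|_{\mH_B^\beta}<1$; the correct chain is $f(1-f)\leq f\leq\|f\|_\infty\leq\|f\|_{\mH_B^\beta}$, giving $\|f\|_{\mH_B^\beta}^{k/\beta}\cdot\|f\|_{\mH_B^\beta}^{1/2-k/\beta}=\|f\|_{\mH_B^\beta}^{1/2}$.
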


\begin{proof}
For $0<\eta\leq1/4$, let $f_\eta = (f + \eta)(1-2\eta)$ and note that $f_\eta \in [c\eta, 1-c\eta]$ for some $c>0$ independent of $\eta$. We have $|f_\eta|_{C^\beta} \leq |f|_{C^\beta}$ and, using that the $|\cdot|_{\mH^\beta}$-seminorm of a positive constant function is 0, $|f_\eta|_{\mH^\beta} \leq  |f|_{\mH^\beta}$, $|1-f_\eta|_{\mH^\beta} \leq  |1-f|_{\mH^\beta}$. Using Lemma \ref{lem.arcsin_lip} and Remark \ref{rem.arcsin_lip} then yields $|\arcsin \sqrt{f_\eta}|_{C^\beta} \leq C\|f\|_{\mH_B^\beta}/\sqrt{\eta}$. Consider $f(x) \leq 1/2$. Note that $\arcsin (t)$ is continuously differentiable for $0 \leq t \leq 3/4$ with bounded derivative $(1-t^2)^{-1/2}$. Thus, using also that $\sqrt{\cdot}$ is $1/2$-H\"older continuous and that $\| f \|_\infty \leq 1$,
\begin{align*}
|\arcsin \sqrt{f(x)} - \arcsin \sqrt{f_\eta(x)}| \lesssim |\sqrt{f(x)} - \sqrt{f_\eta(x)}| \lesssim |\eta(1-2f(x)) - 2\eta^2|^{1/2}  \lesssim \sqrt{\eta} .
\end{align*}
For $f(x) \geq 1/2$ an identical bound holds using that $t\mapsto \arcsin(t)$ is $1/2$-H\"older continuous for $1/4\leq t \leq 1$ and that $\sqrt{\cdot}$ is continuously differentiable with bounded derivative on this interval. Using the inequality above and Lemma \ref{lem.wav_decay_gen},
\begin{align*}
|\langle \arcsin \sqrt{f} , \psi_{j,k} \rangle| & \leq \left| \int \arcsin \sqrt{f_\eta} \psi_{j,k} \right|  +  \left|  \int (\arcsin \sqrt{f_\eta} -\arcsin \sqrt{f}) \psi_{j,k} \right| \\
& \lesssim   \eta^{-1/2} \|f\|_{\mH_B^\beta} 2^{-\frac{j}{2}(2\beta+1)} + \sqrt{\eta} 2^{-j/2} .
\end{align*}
Balancing these gives $\eta = 2^{-j\beta}\|f\|_{\mH_B^\beta}$ and thus the first result.

Assume that $f(x_0)\leq 1/2$ (the case $f(x_0)>1/2$ is similar). We show that for all $j \geq j(x_0)$, the support of any $\psi_{j,k}$ with $\psi_{j,k}(x_0) \neq 0$ is contained in the set $\{ x : f(x_0) /2 \leq f(x) \leq 3f(x_0)/2 \}$. To see this observe that for any such $\psi_{j,k}$ it holds that $|\supp(\psi_{j,k})| \leq 2^{-j}|\supp(\psi)| \leq a (f(x_0)/\|f\|_{\mH_B^\beta})^{1/\beta}$ and so applying Lemma \ref{lem.local_func_size_arcsin} yields that $|f(t)-f(x_0)| \leq f(x_0)/2$ for any $t \in \supp(\psi_{j,k})$. Using this and applying Lemma \ref{lem.arcsin_lip},
\begin{align*}
\|\arcsin \sqrt{f}\|_{\mH^\beta(\supp(\psi_{j,k}))} \leq C(\beta)\|f\|_{\mH_B^\beta} /\sqrt{f(x_0)/2},
\end{align*}
where the first norm refers to $\arcsin \sqrt{f}$ restricted to $\supp(\psi_{j,k})$ with the obvious modification of $\|\cdot\|_{\mH_B^\beta}$ to this set. Applying Lemma \ref{lem.wav_decay_gen} to such $(j,k)$ then yields the result.
\end{proof}

\begin{lem}
\label{lem.log_lip}
For $\beta>0$, there exists a constant $C(\beta)$ such that for all $f \in \mathcal{H}^\beta$, $0\leq k <\beta$ and $x,y\in[0,1]$,
\begin{equation*}
|(\log f(x))^{( \lfloor \beta \rfloor )} - (\log f(y))^{(\lfloor \beta \rfloor)} | \leq \frac{C(\beta)(|f|_{C^\beta} + |f|_{\mathcal{H}^\beta}) }{\min(f(x), f(y))} |x-y|^{\beta - \lfloor \beta \rfloor}
\end{equation*}
and 
\begin{equation*}
\left| \frac{d^k}{dx^k} \log f(x) \right| \leq C(\beta) \|f\|_{\mH^\beta}^{k/\beta} f(x)^{-k/\beta}.
\end{equation*}
Moreover, if $f \geq \varepsilon > 0$,
\begin{align*}
\|\log f\|_{\mathcal{H}^\beta} \leq \frac{C(\beta)}{\varepsilon} \|f\|_{\mathcal{H}^\beta}.
\end{align*}
\end{lem}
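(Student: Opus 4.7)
The proof is a close parallel of Lemma~\ref{lem.arcsin_lip}, with a substantial simplification: the link function $h(t)=\log t$ has the single-term derivative $h^{(M)}(t)=(-1)^{M-1}(M-1)!\,t^{-M}$ for $M\geq 1$, instead of the sum over $(p,q,r)$ indices in \eqref{arcsin lemma eq1}. I would organize the argument into the same three stages as Lemma~\ref{lem.arcsin_lip}.

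\textbf{Pointwise bound.} Apply Fa\`a di Bruno \eqref{eq.FaaDiBruno} to $\log f$. For each $(m_1,\ldots,m_k)\in\mathcal{M}_k$ with $M=\sum_j m_j$, the defining inequality of $\mathcal{H}^\beta$ gives $\prod_j |f^{(j)}(x)|^{m_j}\leq \|f\|_{\mH^\beta}^{k/\beta}f(x)^{M-k/\beta}$ (using $\sum j m_j=k$ and $\sum m_j=M$), while $|h^{(M)}(f(x))|\leq (M-1)!\,f(x)^{-M}$. Multiplying, the $f(x)$ exponents collapse to $-k/\beta$, and summing over the finite set $\mathcal{M}_k$ yields the pointwise estimate.

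\textbf{H\"older bound.} Set $k=\lfloor\beta\rfloor$ (the case $k=0$ follows from the mean value theorem together with the pointwise bound for the first derivative), write $R=|f|_{C^\beta}+|f|_{\mH^\beta}$, and WLOG assume $f(y)\leq f(x)$. I split into two regimes exactly as in Lemma~\ref{lem.arcsin_lip}. In the short-range regime $|x-y|\leq a(f(y)/R)^{1/\beta}$ with $a$ from Lemma~\ref{lem.local_func_size}, Lemma~\ref{lem.local_func_size} makes $f(x)$ and $f(y)$ comparable. For each term in Fa\`a di Bruno I would split
\[
\Bigl| h^{(M)}(f(x))\prod_j (f^{(j)}(x))^{m_j} - h^{(M)}(f(y))\prod_j (f^{(j)}(y))^{m_j}\Bigr|\leq (I)+(II)
\]
as in the proof of Lemma~\ref{lem.arcsin_lip}. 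For $(I)$, control $|f(x)^{-M}-f(y)^{-M}|$ by the analogue of \eqref{eq.inv_mom_f_diff_bd}, namely $CR^{1-k/\beta}f(y)^{-M-1+k/\beta}|x-y|^{\beta-k}$, and multiply by the product bound from the pointwise step. For $(II)$, invoke the derivative-product difference estimate already used in the proof of Lemma~\ref{lem.arcsin_lip} and taken from \cite{RaySchmidt-Hieber2015c}, combined with $|h^{(M)}(f(y))|\leq Cf(y)^{-M}$. Both contributions are bounded by $CRf(y)^{-1}|x-y|^{\beta-k}$. In the long-range regime $|x-y|>a(f(y)/R)^{1/\beta}$, apply the pointwise estimate of the first step at each endpoint and use
\[
R^{k/\beta}f(y)^{-k/\beta}=\frac{R}{f(y)}\Bigl(\frac{f(y)}{R}\Bigr)^{(\beta-k)/\beta}\leq \frac{R}{f(y)}\Bigl(\frac{|x-y|}{a}\Bigr)^{\beta-k}.
\]

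\textbf{Norm bound under $f\geq\varepsilon$.} The $C^\beta$-part follows immediately from the previous step since $\min(f(x),f(y))\geq\varepsilon$ in the denominator. For the seminorm $|\log f|_{\mH^\beta}$ I would substitute $f\geq\varepsilon$ into the pointwise derivative bound and compare the resulting factor $f(x)^{-j/\beta}$ with $|\log f(x)|^{(\beta-j)/\beta}$, following the same absorption computation as at the end of the proof of Lemma~\ref{lem.arcsin_lip}; this is exactly the place where the lower bound on $f$ is needed.

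\textbf{Main obstacle.} The only non-routine bookkeeping is in the short-range regime of the second step, tracking the exponents of $R$ and $f(y)$ uniformly over $(m_1,\ldots,m_k)\in\mathcal{M}_k$ to obtain a bound involving only $|f|_{C^\beta}+|f|_{\mH^\beta}$ (rather than the full $\|f\|_{\mH^\beta}$). This is strictly easier than the analogous step in Lemma~\ref{lem.arcsin_lip}, where one had to track inverse moments of both $f$ and $1-f$; here only $f$ appears, and the derivatives of $h$ are a single monomial.
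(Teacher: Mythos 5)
Your proposal is correct and follows exactly the route the paper intends: the paper's proof of this lemma is literally the single sentence ``The proof follows as that of Lemma~\ref{lem.arcsin_lip} above or Lemma 5.3 of \cite{RaySchmidt-Hieber2015c}, noting that $\log^{(r)}(x)=(-1)^{r-1}(r-1)!\,x^{-r}$ for $r\geq1$,'' and your expansion reproduces precisely that parallel structure (Fa\`a di Bruno with the simplified monomial $h^{(M)}$, the $(I)+(II)$ split using Lemma~\ref{lem.local_func_size} and the analogue of \eqref{eq.inv_mom_f_diff_bd} in the short-range regime, the pointwise bound in the long-range regime, and the same absorption argument for the $\mH^\beta$-norm estimate when $f\geq\varepsilon$).
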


\begin{proof}
The proof follows as that of Lemma \ref{lem.arcsin_lip} above or Lemma 5.3 of \cite{RaySchmidt-Hieber2015c}, noting that $\log^{(r)}(x) = (-1)^{r-1} (r-1)! x^{-r}$ for $r \geq 1$.
\end{proof}

The large fluctuations of $\log x$ near 0 mean we can only obtain the second wavelet bounds of Lemmas \ref{lem.wav_decay_small} and \ref{lem.wav_decay_small_arcsin} in this case.

\begin{lem}
\label{lem.wav_decay_log}
Suppose that $\psi$ is $S$-regular and that $f\in\mathcal{H}^\beta$ for $0 < \beta < S$. For $x_0\in[0,1]$, let $j(x_0)$ be the smallest integer satisfying $2^{j(x_0)}\geq |\supp(\psi)|a^{-1} (\|f\|_{\mH^\beta}/f(x_0))^{1/\beta}$ where $a=a(\beta)$ is the constant in Lemma \ref{lem.local_func_size}. Then for any wavelet $\psi_{j,k}$ with $j\geq j(x_0)$ and $x_0\in \supp(\psi_{j,k})$,
\begin{equation*}
|\langle \log{f}, \psi_{j,k} \rangle| \leq C(\psi,\beta)\frac{ \|f\|_{\mH^\beta} }{f(x_0)} 2^{-\frac{j}{2}(2\beta+1)}.
\end{equation*}
\end{lem}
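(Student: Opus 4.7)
The plan is to combine the Taylor-type wavelet bound of Lemma \ref{lem.wav_decay_gen} with the pointwise derivative estimates for $\log f$ from Lemma \ref{lem.log_lip}. The key insight is that the threshold $j(x_0)$ is chosen precisely so that, on the support of any $\psi_{j,k}$ with $j \geq j(x_0)$ and $x_0 \in \supp(\psi_{j,k})$, the function $f$ is comparable to $f(x_0)$ by Lemma \ref{lem.local_func_size}. This allows one to control the singular denominators appearing in the derivatives of $\log f$, which is the only way in which Lemma \ref{lem.log_lip} differs from its counterparts for $\sqrt{f}$ and $\arcsin \sqrt{f}$.

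First I would verify the localisation step. For any $j \geq j(x_0)$ and any $y$ in the support of $\psi_{j,k}$ with $x_0 \in \supp(\psi_{j,k})$, the support diameter satisfies
$$|y - x_0| \leq |\supp(\psi_{j,k})| \leq 2^{-j}|\supp(\psi)| \leq a\bigl(f(x_0)/\|f\|_{\mathcal{H}^\beta}\bigr)^{1/\beta}.$$
Lemma \ref{lem.local_func_size} then yields $f(y) \geq f(x_0)/2$ for all $y \in \supp(\psi_{j,k})$, so $\log f$ is smooth on this set and obeys the bounds of Lemma \ref{lem.log_lip} with $\min(f(x),f(y)) \geq f(x_0)/2$ for any $x,y$ in the support.

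Next I would apply Lemma \ref{lem.wav_decay_gen} to $\log f$; although $\log f$ is not globally in $C^\beta([0,1])$ when $f$ vanishes, only local regularity on $\supp(\psi_{j,k})$ enters the Taylor-expansion argument, so we may harmlessly multiply $\log f$ by a smooth cutoff supported on a neighbourhood of $\supp(\psi_{j,k})$ where $f \geq f(x_0)/2$. This gives
$$|\langle \log f, \psi_{j,k}\rangle| \leq \frac{1}{\lfloor\beta\rfloor!}\left|\int \bigl[(\log f)^{(\lfloor\beta\rfloor)}(x_0+g(y)(y-x_0)) - (\log f)^{(\lfloor\beta\rfloor)}(x_0)\bigr](y-x_0)^{\lfloor\beta\rfloor}\psi_{j,k}(y)\,dy\right|.$$
Combining the first inequality of Lemma \ref{lem.log_lip} with the lower bound $f \geq f(x_0)/2$ on the relevant support, the integrand's H\"older-type factor is bounded by $C(\beta)\|f\|_{\mathcal{H}^\beta} f(x_0)^{-1}|y-x_0|^{\beta-\lfloor\beta\rfloor}$.

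Finally, using $|y-x_0| \leq C(\psi) 2^{-j}$ on $\supp(\psi_{j,k})$ and the standard bound $\int|\psi_{j,k}(y)|\,dy \leq C(\psi) 2^{-j/2}$, I would conclude
$$|\langle \log f,\psi_{j,k}\rangle| \leq C(\psi,\beta)\,\frac{\|f\|_{\mathcal{H}^\beta}}{f(x_0)}\,2^{-j\beta}\cdot 2^{-j/2} = C(\psi,\beta)\,\frac{\|f\|_{\mathcal{H}^\beta}}{f(x_0)}\,2^{-j(2\beta+1)/2}.$$
The only real obstacle is the cutoff/extension technicality mentioned above; once the localisation from Lemma \ref{lem.local_func_size} is in hand, no new estimate beyond Lemmas \ref{lem.wav_decay_gen} and \ref{lem.log_lip} is needed, and the proof parallels that of Lemma \ref{lem.wav_decay_small} with the square-root replaced by the logarithm and the factor $\sqrt{f(x_0)}$ replaced by $f(x_0)$ in the denominator.
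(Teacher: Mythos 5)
Your proof is correct and follows the same route as the paper's (the paper simply cites Proposition 3.2 of \cite{RaySchmidt-Hieber2015c}; the fully written-out argument for the analogous Lemma \ref{lem.wav_decay_small_arcsin} is the template). You localize with Lemma \ref{lem.local_func_size} so that $f\geq f(x_0)/2$ wherever Lemma \ref{lem.wav_decay_gen}'s Taylor-expansion points live, invoke the H\"older-type bound of Lemma \ref{lem.log_lip} with that denominator, and integrate against $\psi_{j,k}$ --- exactly what the paper does, with the remark about the local (or cutoff-based) applicability of Lemma \ref{lem.wav_decay_gen} being the same technicality the paper quietly handles via ``restricted to $\supp(\psi_{j,k})$.''
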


\begin{proof}
The proof follows as in Proposition 3.2 of \cite{RaySchmidt-Hieber2015c}.
\end{proof}

\begin{lem}\label{lem.mollifier}
There exists a non-negative, symmetric, infinitely differentiable function $K_0$ supported on $[-1,1]$ such that for any $\beta >0$ and all $x \in \R$,
\begin{align*}
|K_0^{(j)}(x)| \leq C(\beta,j)|K_0(x)|^\frac{\beta-j}{\beta}, \quad \quad \quad j = 1,...,\lfloor \beta \rfloor .
\end{align*}
\end{lem}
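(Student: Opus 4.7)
The plan is to take the classical bump function $K_0(x) = \exp(-(1-x^2)^{-1}) \mathbf{1}_{\{|x|<1\}}$, possibly rescaled by a positive constant, and verify the stated inequalities directly. This function is manifestly non-negative, symmetric, compactly supported in $[-1,1]$, and a standard argument (all derivatives vanishing to infinite order as $|x| \to 1^-$) shows that $K_0 \in C^\infty(\R)$. For $|x| \geq 1$ both sides of the claimed inequality are zero (using $0/0 = 0$), so it suffices to establish the bound on the open interval $(-1,1)$.

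First I would control the derivatives of $K_0$. Writing $K_0 = e^{-g}$ with $g(x) = (1-x^2)^{-1} = \tfrac{1}{2}[(1-x)^{-1}+(1+x)^{-1}]$, one has the explicit formula $g^{(k)}(x) = \tfrac{k!}{2}[(1-x)^{-(k+1)} + (-1)^k(1+x)^{-(k+1)}]$, which together with the inequality $(1\pm x)^{-(k+1)} \leq 2^{k+1}(1-x^2)^{-(k+1)}$ yields $|g^{(k)}(x)| \leq C_k (1-x^2)^{-(k+1)}$ on $(-1,1)$. Applying Fa\`a di Bruno's formula to $K_0 = e^{-g}$, one obtains
\begin{equation*}
K_0^{(j)}(x) = K_0(x) \sum_{\pi} c_\pi \prod_{B \in \pi} (-g^{(|B|)}(x)),
\end{equation*}
where $\pi$ ranges over partitions of $\{1,\ldots,j\}$. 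For such a partition with blocks of sizes $k_1,\ldots,k_m$ (so $\sum k_i = j$ and $m \leq j$), the corresponding product is bounded by a constant times $(1-x^2)^{-(j+m)} \leq (1-x^2)^{-2j}$. Combining yields the intermediate bound
\begin{equation*}
|K_0^{(j)}(x)| \leq C_j (1-x^2)^{-2j} K_0(x), \qquad |x|<1.
\end{equation*}

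Second, I would convert this polynomial-in-$(1-x^2)^{-1}$ factor into a power of $K_0$ itself at the cost of a $\beta$-dependent constant. Since $K_0(x)^{-j/\beta} = \exp(j/(\beta(1-x^2)))$, setting $t = (1-x^2)^{-1} \geq 1$ reduces the needed inequality to $t^{2j} \leq C(\beta,j)\, e^{jt/\beta}$, which holds on $[1,\infty)$ since $\sup_{t\geq 1} t^{2j} e^{-jt/\beta} < \infty$. Thus $(1-x^2)^{-2j} \leq C(\beta,j)\, K_0(x)^{-j/\beta}$ on $(-1,1)$, and substituting into the bound from the previous paragraph gives
\begin{equation*}
|K_0^{(j)}(x)| \leq C(\beta,j) K_0(x)^{1-j/\beta} = C(\beta,j) K_0(x)^{(\beta-j)/\beta},
\end{equation*}
as required.

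The only nontrivial step is the combinatorial derivative bound via Fa\`a di Bruno; the rest is a routine polynomial-versus-exponential comparison. No delicate estimates are needed near the boundary beyond observing that both sides vanish there.
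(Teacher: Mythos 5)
Your proof is correct and follows essentially the same route as the paper: both establish that $K_0^{(j)}(x)$ is of the form (rational factor bounded by $(1-x^2)^{-2j}$) $\times K_0(x)$, and then absorb the $(1-x^2)^{-2j}$ factor into $K_0(x)^{-j/\beta}$ via the elementary inequality $\sup_{t\geq 1} t^{2j}e^{-jt/\beta}<\infty$. The only cosmetic difference is that you derive the derivative structure via Fa\`a di Bruno applied to $e^{-g}$, whereas the paper simply asserts $K_0^{(j)}(x)=p_j(x)(1-x^2)^{-2j}K_0(x)$ for a polynomial $p_j$ of degree at most $2j$, which is the same fact.
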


\begin{proof}
The function $K_0(x) = \exp (-1/(1-x^2) ) 1\{ |x| \leq 1 \}$ is symmetric, non-negative and infinitely differentiable. For the last condition, note that $K_0^{(j)}(x) = p_j(x) (1-x^2)^{-2j} K_0(x)$ for some polynomial $p_j$ of degree at most $2j$. Then for $x\in[-1,1]$,
\begin{align*}
|K_0^{(j)}(x)|^\frac{\beta}{\beta-j} = \left| \frac{p_j(x)}{(1-x^2)^{2j}} \right|^{\frac{\beta}{\beta-j}} e^{-\frac{j}{(\beta-j)(1-x^2)}} K_0(x) \leq C'(\beta,j) K_0(x).
\end{align*}
\end{proof}

\bibliographystyle{acm}    
\bibliography{bibhd}           

\end{document}